\documentclass[11pt]{article}
\usepackage[a4paper, margin=1in]{geometry}
\usepackage{graphicx,times,amsmath,amsthm,amsfonts,amssymb,mathtools,color,latexsym,amsxtra,layout,bibentry,hyperref}
\usepackage{subfig}
\usepackage[dvipsnames]{xcolor}
\usepackage{graphicx}
\usepackage[shortlabels]{enumitem}

\usepackage{array}
\newcolumntype{L}[1]{>{\raggedright\let\newline\\\arraybackslash\hspace{0pt
}}m{#1}}
\newcolumntype{C}[1]{>{\centering\let\newline\\\arraybackslash\hspace{0pt}}
m{#1}}
\newcolumntype{R}[1]{>{\raggedleft\let\newline\\\arraybackslash\hspace{0pt}
}m{#1}}

\usepackage{times,amsmath,stackengine,amsthm,amsfonts,amssymb,mathtools,color,latexsym,amsxtra,layout}
\usepackage{graphicx}
\usepackage{algorithmicx, algpseudocode}
\usepackage{algorithm}
\usepackage{multirow}
\usepackage{comment, centernot}

\newcommand{\inner}[1]{\left \langle #1  \right \rangle}

\newcommand{\ltwoinner}[1]{\left \langle #1  \right \rangle_{2}}
\newcommand{\ltwof}[1]{\left \langle #1  \right \rangle_{f}}
\newcommand{\ltwoD}[1]{\left \langle #1  \right \rangle_{D}}
\newcommand{\ltwoDf}[1]{\left \langle #1  \right \rangle_{D,f}}

\newcommand{\norm}[1]{\left \| #1 \right \|_2}
\newcommand{\fnorm}[1]{\left \| #1 \right \|_f}
\newcommand{\Dnorm}[1]{\left \| #1 \right \|_D}
\newcommand{\Dfnorm}[1]{\left \| #1 \right \|_{D,f}}
\newcommand{\Mnorm}[1]{\left \| #1 \right \|_M}

\newcommand{\ltwonorm}[1]{\left \| #1 \right \|_{2}}

\newcommand{\linftynorm}[1]{\left \| #1 \right \|_{\infty}}

\newcommand{\hnorm}[1]{\left \| #1 \right \|_{h}}

\usepackage{graphicx} 
\theoremstyle{plain}
\newtheorem{theorem}{Theorem}[section]
\newtheorem{lemma}[theorem]{Lemma}
\newtheorem{remark}[theorem]{Remark}
\newtheorem{o-thm}[theorem]{Theorem}

\newtheorem{corollary}[theorem]{Corollary}

\theoremstyle{definition}

\title{Direct Problem for Gas Diffusion in Polar Firn\\ with\\ Variable Coefficients}
\author{Sophie Moufawad \thanks{American University of Beirut (AUB), Beirut, Lebanon.  (sm101@aub.edu.lb) } \and Nabil Nassif \thanks{American University of Beirut (AUB), Beirut, Lebanon.  (nn12@aub.edu.lb) }\;\,\thanks{This work was supported by the AUB University Research Board grant number 104261 (Project 26742).}  \and Faouzi Triki  \thanks{Laboratoire Jean Kuntzmann, UMR CNRS 5224, Université Grenoble-Alpes, 700 Avenue Centrale, 38401 Saint-Martin-d’Hères, France  (faouzi.triki@univ-grenoble-alpes.fr) }}

\begin{document}

\maketitle
\begin{abstract}
We consider the mathematical model of gas trapping in deep polar ice (firns), which consists
of a parabolic partial diﬀerential equation, that can degenerate at one
boundary extreme. 
In \cite{FirnA}, we considered all the coefficients to be constants, except the diﬀusion coeﬃcient $D(z)$ that is to be reconstructed. In this paper, we assume both the diﬀusion coeﬃcient $D(z)$ and the volume fraction $f(z)$ are functions. 
The difficulty in this problem, both theoretically and computationally, arises from the fact that $D(z)$ and $f(z)$ may be zero at bottom of the firn.  
To handle such degeneracy, we defined appropriate weighted Sobolev spaces and used Lion's theorem to prove existence and uniqueness of the semi-variational formulation of the Firn PDE. A full discrete system is obtained through a P1 Finite element Galerkin procedure in space and an Euler-Implicit  scheme in time. Sufficient conditions for the existence and uniqueness of the solution for the discrete system are obtained.
\end{abstract}

\section{Introduction}
The polar ice and snow of Antarctica and Greenland serve as an exceptional archive of historical climate and atmospheric conditions. Owing to a well-established understanding of the mechanisms governing gas entrapment in deep ice; particularly the processes of densification and pore closure within firn layers, typically occurring in the upper hundred meters of the ice sheet; several models have been developed to describe these phenomena.

We consider the model developed in \cite{Witrant2012acp}, where the concentration $ \rho_\alpha ^{\rm o} $ of a gas $ \alpha$ in open pores satisfies an initial-value, time-dependent advection-diffusion partial differential equation on a one-space dimension segment $[0,z_F]$  with Dirichlet boundary condition at $0$ and a mixed one at $z_F$, where $\rho_\alpha^{\text{atm}}(0)=0$. For  $\; z\in (0, z_{\rm F}),  t>0$:
\begin{equation}\label{eq:trace_gas_dynamics}
\left\{
\begin{array}{l}
\displaystyle{\frac{\partial}{\partial t} [\rho_\alpha^{\rm o} f] + \frac{\partial}{\partial z} [\rho_\alpha^{\rm o} f ({v}+{w}_{\rm air})] + \rho_\alpha^{\rm o} (\tau+\lambda) =\frac{\partial}{\partial z} \left[D_\alpha \left(\frac{\partial \rho_{\rm \alpha}^{\rm o}}{\partial z} - {\rho}_{\alpha}^{\rm o} \frac{M_{\alpha}g}{RT} \right)\right]}, \vspace{1mm}\\
\rho_\alpha^{\rm o}(0,t) = \rho_\alpha^{\rm atm}(t), \; t>0,\vspace{2mm} \\
\displaystyle{ D_\alpha(z_F)\left(\frac{\partial {\rho}_{\alpha}^{\rm o} }{\partial z}(z_{\rm F},t) -  \frac{M_{\alpha}g}{RT} {\rho}_{\alpha}^{\rm o}(z_{\rm F},t)\right) = 0},\\
\rho_\alpha^{\rm o}(z,0) = 0
\end{array}
\right.
\end{equation}
Moreover, $ f(z) $ is the average volume fraction in the open pores and $ D_\alpha(z) $ is the effective diffusion coefficient of the gas $\alpha$ in the Firn ($m^2/yr$) which is given by 
\begin{eqnarray} \label{TT}
D_\alpha(z) = \left\{ \begin{array}{llcc}
D_{\rm{eddy}}(z)+r_\alpha c_f D_{\textrm{CO2, air}}(z)\;\; \textrm{ if } z \leq z_{\textrm{eddy}},\\
r_\alpha D_{\textrm{CO2, air}}(z),\;\; \textrm{ if } z > z_{\textrm{eddy}},
\end{array}
\right.
\end{eqnarray}
with $ z_{\textrm{eddy}}$, $ r_\alpha $, and $ c_f $ are known constants, and $D_{\rm{eddy}}(z), D_{\textrm{CO2, air}}(z)$ diffusion coefficients. 
The remaining terms are considered constants in this paper,  and
 summarized in Table \ref{tab:results}.
\begin{table}[H]\label{tab}
\setlength{\tabcolsep}{3pt}
\caption{The description of the model's parameters. }\label{tab:results}
  \centering
\begin{tabular}{|l|l|}\hline
$ z_{\rm F} $ & the depth of the Firn (m) \\ \hline
 $ v $ & the average descending speed in the Firn (m/yr)\\\hline
  $ w_{air} $ & the average speed of the air (m/yr) 
  \\\hline
  $ \tau $ & the  mass exchange rate between open and closed pores ($/yr$) \\\hline
 $ \lambda $ & the rate of radioactive decay ($/yr$) $\in [0.5, 0.999] $\\\hline
 $ M_\alpha $ & the molar mass of the gas ($kg/mol$) $\in [0.004, 0.133]$; \;\;\; $M_{CO_2} \approx 0.044$ \\\hline
 $ g $ & the gravitational acceleration ($m/s^2$) $\approx 9.80665$\\ \hline
 $ R $ & the universal constant of ideal gases ($J/mol/K$) = $8.314 $ \\\hline
  $ T $ & the mean temperature of the Firn ($K$) $\approx -31+273.15 \approx 242 K$ \\\hline
  $ \rho_\alpha^{\rm atm} $ & the concentration of gas in the atmosphere ($mol/m^3$ of void space) \\\hline
\end{tabular}
\end{table}
\noindent In \cite{Moufawad2024Firn}, we studied the direct and inverse problem (for reconstructing the diﬀusion coeﬃcient) considering $f(z)$ to be constant.  We proved the existence and uniqueness of a solution for such direct problem and build a robust simulation algorithm. An algorithm
for computing the gradient of the objective function of the inverse problem was proposed and its eﬃciency tested using diﬀerent minimization techniques available in MATLAB’s optimization toolbox. \\

\noindent In this paper, we consider the same model where $f(z)\geq 0$ and $D(z)\geq 0$ are positive functions that may degenerate at $z_F$. Our ultimate goal is to study the inverse problem where the diffusion coefficient $D(z) $ and/or the volume fraction $f(z)$ are recovered given the FIRN data. For that purpose, we need to first study and discretize the Direct problem.\\

\noindent In what follows we denote $D := D_\alpha$, ${ \rho^\text{atm}(t) = \rho_\alpha^\text{atm}(t) }$,  $\rho:=\rho^0_\alpha$, and let $\mathcal{M} = \dfrac{M_\alpha g}{RT_m} > 0$, $\mathcal{G} = \tau + \lambda > 0$ and $\mathcal{F} = v + w_\text{air} > 0$.  Then \eqref{eq:trace_gas_dynamics}
 becomes:
\begin{equation} \label{eq:Witrant}
\begin{cases}
{\dfrac{\partial}{\partial t}[f\rho ] + \dfrac{\partial}{\partial z}[f\mathcal{F}\rho ] + \mathcal{G} \rho  = \dfrac{\partial}{\partial z}\left[D\dfrac{\partial\rho}{\partial z} -  D\mathcal{M}\rho\right]},\qquad \forall t\in (0,T] \\
{\rho(0,t) = \rho^\text{atm}(t)}, \quad t > 0, \\
{D(z_F)\left(\dfrac{\partial\rho}{\partial z}(z_F,t) - \mathcal{M} \rho(z_F,t)\right) = 0}, \\
{\rho(z,0) = 0}.
\end{cases}
\end{equation}
\noindent We assume in this work that:
\begin{eqnarray}
 &\bullet& f\in C(0,z_F) \mbox{ with }0  \leq f(z) \leq f_{max},\; \mbox{ and } f(z)>0 \;\;\forall z \in [0,z_F). \label{A10} \\
&\bullet&D\in C(0,z_F) \mbox{ with }0 \leq  D(z) \leq D_{max},   \mbox{ and } D(z)>0 \;\;\forall z \in [0,z_F).\qquad\qquad\qquad
\label{A20} \\
&\bullet & |D(z)-D(y)|<L |z-y|,\quad \forall z,y \in [0,z_F] ,  \text{ where } L >0. \quad \mbox{(Lipschitz Continuity)} \qquad \label{A2pp0}\\
&\bullet & \rho^{\text{atm}}(t) \in H^{1}(0,T)\label{eq:rhoatm}\vspace{4mm}\\
&\bullet & \sup_{z\in (0,z_F)} \dfrac{D(z)}{f(z)}<\infty \mbox{ and }  \sup_{z\in (0,z_F)} \dfrac{f(z)}{D(z)}<\infty \hspace{80mm}\label{A2p2}
\end{eqnarray}
Note that condition \eqref{A2p2} implies that the degeneracies of both $f(z)$ and $D(z)$ are similar.

\noindent We first derive the semi-variational formulation and prove the existence and uniqueness of its solution (section \ref{sec:semiv}). Then, in section \ref{sec:discch} we discretize the rescaled semi-variational formulation using Euler-Implicit time scheme and P1 Finite element space scheme, put it in matrix form, and study the properties of the obtained matrices leading to the proof of uniqueness of the discrete solution.
\section{Semi-Variational Formulation}\label{sec:semiv}
Let $\phi \in H^1(0,z_F)$ such that $\;\phi(0)=0$. Multiplying \eqref{eq:Witrant} by $\phi$,  integrating by parts with respect to $z$ from zero to $z_F$, and using the boundary condition $D(z_F) \bigl[\rho_z(z_F,t) - \mathcal{M} \rho(z_F,t)\bigr] = 0$, we get: 
\begin{eqnarray}
&&\int_0^{z_F} f \rho_t \phi \hspace{0.1cm} dz+ \int_0^{z_F} (f \rho \mathcal{F})_z \phi  \hspace{0.1cm} dz+ \int_0^{z_F} \rho \mathcal{G} \phi  \hspace{0.1cm} dz = \int_0^{z_F} [D (\rho_z - \rho \mathcal{M})]_z \phi dz\nonumber\\
&&\int_0^{z_F} \rho_t f \phi \, dz +\mathcal{F} f(z_F) \rho(z_F,t) \phi(z_F)- \mathcal{F} \int_0^{z_F} \rho f {\phi_z} \, dz + \mathcal{G} \int_0^{z_F} \rho \phi \, dz = -\int_0^{z_F} D (\rho_z - \rho \mathcal{M}) {\phi_z} \, dz
\nonumber
\end{eqnarray}
Therefore one gets
\begin{equation}\label{eq2.8}
\langle f\rho_t , \phi \rangle  + \mathcal{A}(\rho,\phi)=0
\end{equation}
where the bilinear form:
\begin{equation}\label{eq::bilin}
    \mathcal{A}(\rho,\phi)=\langle D \rho_z, {\phi_z} \rangle -\langle  (\mathcal{M} D+f\mathcal{F})\rho, {\phi_z} \rangle + \mathcal{G} \langle \rho, \phi \rangle + \mathcal{F} {f(z_F)} \rho(z_F,t) \phi(z_F)
\end{equation}
\normalfont \noindent So, the variational Firn problem is to find $\rho : [0,T] \times [0, z_F] \to \mathbb{R}$ such that $\forall t > 0$:

\begin{equation}\label{eq::system}
\begin{cases}
 \rho(\cdot,t) \in U_{ad}=\mathcal{T} + \{\rho^ {\text{atm}}\} &\\
\langle \rho_t, \phi \rangle_f + \mathcal{A}(\rho,\phi) = 0,&\,\forall \phi\in \mathcal{T} \\
\rho(z,0) = 0.
\end{cases}
\end{equation}
To prove the existence and uniqueness of a solution to the variational formulation, we use Lion's theorem \ref{th:lions}.
\begin{theorem}[Lion's theorem]\label{th:lions} Let $V$ and $H$ be  Hilbert spaces satisfying:
\begin{equation}
V \subset H \subset V^* \text{ (the dual of } V)
\end{equation}
\noindent with the injection from $V$ to $H$ is dense and continuous.
\noindent Assuming a bilinear form $a(\cdot,\cdot): V \times V \to \mathbb{R}$ satisfies
\begin{equation}
\begin{cases}
|a(v,w)| \leq M \|v\|_V \|w\|_V \\
|a(v,v)| \geq c \|v\|_V^2 - c_1 \|v\|_H^2
\end{cases}
\label{eq25}
\end{equation}
\noindent then for $u_0 \in H$ and $F(t) \in L^2(0,T;V^*)$, the initial value problem
\begin{equation}
\begin{cases}
\inner{u_t,v}_H + a(u(t),v) = \inner{ F(t), v }_{H} \\
u(0) = u_0
\end{cases}
\end{equation}
\noindent admits a unique solution u, satisfying:
\begin{equation}
u \in L^2(0,T;V) \cap C(0,T;H), \quad \frac{du}{dt} \in L^2(0,T;H).
\end{equation}
\end{theorem}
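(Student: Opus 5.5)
The plan is to prove the statement by the classical Faedo--Galerkin method, after a change of unknown that removes the $-c_1\|v\|_H^2$ term. Put $u(t)=e^{\lambda t}w(t)$ with $\lambda=c_1$. Then $w$ solves the same problem with bilinear form $\tilde a(v,w)=a(v,w)+c_1\inner{v,w}_H$, right-hand side $e^{-c_1 t}F(t)$ and initial datum $u_0$. By \eqref{eq25}, $\tilde a$ is bounded with constant $M+c_1C_{VH}^2$ and satisfies $\tilde a(v,v)\ge c\|v\|_V^2$. Because the transformation is invertible and preserves every function class in the conclusion, it suffices to treat the coercive case $c_1=0$.

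Since $V$ is a separable Hilbert space (for the weighted spaces used in the paper this is automatic), we choose a sequence $\{e_k\}$ whose finite linear combinations are dense in $V$, and hence in $H$ by density of the injection. On $V_m=\mathrm{span}\{e_1,\dots,e_m\}$ we look for $u_m(t)=\sum_{k\le m}g_{km}(t)e_k$ satisfying
\[
\inner{u_m',e_j}_H+a(u_m,e_j)=\inner{F(t),e_j},\qquad j\le m,
\]
with $u_m(0)$ the $H$-projection of $u_0$ onto $V_m$. This is a linear ODE system with an invertible Gram matrix and $L^2$ forcing, so Carathéodory's theorem gives a unique absolutely continuous solution on $[0,T]$.

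Next come the a priori estimates. Testing with $u_m$ and using coercivity together with Young's inequality $\inner{F,u_m}\le \frac{c}{2}\|u_m\|_V^2+\frac1{2c}\|F\|_{V^*}^2$ gives
\[
\tfrac12\tfrac{d}{dt}\|u_m\|_H^2+\tfrac c2\|u_m\|_V^2\le \tfrac1{2c}\|F\|_{V^*}^2 .
\]
Hence $u_m$ is bounded in $L^\infty(0,T;H)\cap L^2(0,T;V)$ uniformly in $m$. We extract a weakly (weak-$*$) convergent subsequence with limit $u$. We then pass to the limit in the Galerkin equations multiplied by $\psi(t)\in C^1([0,T])$ with $\psi(T)=0$, and integrate by parts in time. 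This shows that $u$ satisfies the equation in the sense of distributions on $(0,T)$ with values in $V^*$, so that $u'\in L^2(0,T;V^*)$. It also recovers $u(0)=u_0$, since $u_m(0)\to u_0$ in $H$. The Lions--Magenes lemma, which says that $u\in L^2(V)$ and $u'\in L^2(V^*)$ imply $u\in C([0,T];H)$ with $\frac{d}{dt}\|u\|_H^2=2\inner{u',u}$, then gives the continuity in time.

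Uniqueness follows from the same energy identity. The difference $e$ of two solutions satisfies $\frac12\frac{d}{dt}\|e\|_H^2+c\|e\|_V^2\le 0$ with $e(0)=0$, so $e\equiv0$.

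I expect the main obstacle to be the regularity claim $u_t\in L^2(0,T;H)$, which does not hold for general $u_0\in H$ and $F\in L^2(V^*)$. My plan is to obtain it under the additional structure available in the application: $u_0\in V$ (here $u_0=0$) and $F\in L^2(0,T;H)$, the latter coming from lifting $\rho^{\text{atm}}\in H^1(0,T)$. The estimate comes from testing the Galerkin system with $u_m'$. If $a$ were symmetric this would give
\[
\|u_m'\|_H^2+\tfrac12\tfrac{d}{dt}a(u_m,u_m)=\inner{F,u_m'}_H .
\]
For the nonsymmetric $\mathcal A$, however, the advection part $\inner{(\mathcal M D+f\mathcal F)\rho,\phi_z}$ has to be handled separately. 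I would bound it by $C\|u_m\|_V\|u_m'\|_H$ using \eqref{A2p2} and the Lipschitz bound \eqref{A2pp0}, and absorb it with Young's inequality and Gronwall's lemma. This yields a bound on $u_m'$ in $L^2(0,T;H)$, which passes to the limit by weak lower semicontinuity.
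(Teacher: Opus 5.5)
The paper gives no proof of this statement. It quotes Lions' theorem as a classical black box. Your Faedo--Galerkin argument (exponential shift, Galerkin ODE, energy bound in $L^\infty(0,T;H)\cap L^2(0,T;V)$, weak limits, Lions--Magenes lemma, energy uniqueness) is the standard proof. It is sound for the standard conclusion $u\in L^2(0,T;V)\cap C([0,T];H)$, $u'\in L^2(0,T;V^*)$.

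You are right that $\frac{du}{dt}\in L^2(0,T;H)$ fails at this generality; the heat equation on $H^1_0\subset L^2$ with $u_0\in L^2\setminus H^1_0$ already violates it. One further correction should be stated rather than made silently: your shift step reads the second line of \eqref{eq25} without the absolute value. From $|a(v,v)|\ge c\|v\|_V^2-c_1\|v\|_H^2$ one cannot conclude $\tilde a(v,v)\ge c\|v\|_V^2$. With the absolute value the theorem is in fact false: $a(v,w)=-\inner{v_z,w_z}$ on $V=H^1_0(0,1)$, $H=L^2(0,1)$ satisfies it and gives the ill-posed backward heat equation. The hypothesis must be the G{\aa}rding inequality $a(v,v)\ge c\|v\|_V^2-c_1\|v\|_H^2$, which is what Theorem~\ref{th:4g} actually verifies.

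The genuine gap is in your repair of the bound on $u_t$. Put $q=\mathcal{M}D+\mathcal{F}f$ and $b(v,w)=-\inner{qv,w_z}$, so that $a_0:=\mathcal{A}-b$ is symmetric. Assumption \eqref{A2p2} gives $|b(v,w)|\le C\|v\|_f\|w\|_{D,f}$, so the $H$-norm sits on the wrong argument. The term $b(u_m,u_m')$ contains $\partial_z u_m'$, which $\|u_m'\|_H$ does not control.

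You cannot move that derivative in $z$, for two reasons:
\begin{itemize}
\item \eqref{A10} only makes $f$ continuous, so $f'$ need not exist.
\item Even for smooth coefficients, no bound $|\inner{(\mathcal{M}D'+\mathcal{F}f')v,w}|\le C\|v\|_{D,f}\|w\|_f$ holds on $V\times V$ when $f(z_F)=0$. For $f=D=z_F-z$, which meets all the assumptions, it would require $\int_0^{z_F}v^2/f\,dz<\infty$, which is false for $v(z)=z$.
\end{itemize}

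The fix is to swap the arguments by integrating by parts in time: $b(u_m,u_m')=\frac{d}{dt}b(u_m,u_m)-b(u_m',u_m)$. After integrating over $(0,t)$:
\begin{itemize}
\item $|b(u_m(t),u_m(t))|\le\frac14\|u_m(t)\|_V^2+C\|u_m(t)\|_H^2$ is absorbed by $\frac12a_0(u_m(t),u_m(t))\ge\frac12\|u_m(t)\|_V^2-\frac12\|u_m(t)\|_H^2$;
\item $\int_0^t|b(u_m',u_m)|\le\frac14\int_0^t\|u_m'\|_H^2+C\int_0^t\|u_m\|_V^2$.
\end{itemize}
Everything left is controlled by your first energy estimate together with $\|F\|_{L^2(0,T;H)}$ and $\|u_m(0)\|_V$. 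Alternatively, $\rho=e^{Q}\sigma$ with $Q'=\mathcal{M}+\mathcal{F}f/D\in L^\infty$ turns $D\rho_z-q\rho$ into $De^{Q}\sigma_z$ and makes the problem symmetric in $L^2_{fe^{Q}}$. You also need $\|u_m(0)\|_V$ bounded, so use the $V$-projection of $u_0$, not the $H$-projection (immaterial here, since $u_0=0$).

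Finally, $F\in L^2(0,T;H)$ does not follow from the paper's $z$-independent lift $\overline{\rho}=\rho-\rho^{\text{atm}}(t)$. The functional $\phi\mapsto\mathcal{A}(\rho^{\text{atm}}(t),\phi)$ contains $-\rho^{\text{atm}}(t)\inner{q,\phi_z}$, which is not bounded by $C\|\phi\|_f$. For smooth $q$ and $D(z_F)>0$, combining it with the boundary term of $\mathcal{A}$ leaves the point evaluation $-\mathcal{M}D(z_F)\rho^{\text{atm}}(t)\phi(z_F)$.

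A lift compatible with the flux works, for example $\ell=\chi e^{Q-Q(0)}$ with a smooth cutoff $\chi$ equal to $1$ near $0$ and to $0$ near $z_F$. Then $\ell(0)=1$ and $D\ell'-q\ell=De^{Q-Q(0)}\chi'$ is Lipschitz with compact support in $(0,z_F)$. After one integration by parts, the forcing is a bounded function supported where $f$ is bounded below. It therefore lies in $L^2(0,T;H)$, because $\rho^{\text{atm}}\in H^1(0,T)$.
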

\noindent To apply Lions theorem, we first reformulate \eqref{eq::system} by considering the translated concentration:
$$\overline{\rho}:=\rho - \rho^ {\text{atm}}.$$
It is easily verified that $\overline{\rho}$ must solve the following system:\\
 Find
$\overline{\rho} : [0,T] \times [0, z_F] \to \mathbb{R}$ such that:
\begin{equation}
\left\{
\begin{array}{l}
 \forall t > 0,\, \overline{\rho}(\cdot,t) \in \mathcal{T} \\
\langle \overline{\rho}_t, \phi \rangle_f + \mathcal{A}(\overline{\rho},\phi) = -\langle \rho^ {\text{atm}}(t))',\phi\rangle-\mathcal{A}(\rho^ {\text{atm}}(t),\phi),\,\forall \phi\in \mathcal{T} \\
\overline{\rho}(z,0) = -\rho^ {\text{atm}}(0) = 0.
\end{array}
\right.
\label{eq:system1}
\end{equation}
 We start by defining appropriately the spaces $V, H, \mathcal{T}$ in section \ref{sec:gFunc} and then proceed to proving the existence and uniqueness of the solution to system \eqref{eq:system1} in Section \ref{sec:exist}. 

\subsection{Function Spaces}\label{sec:gFunc}
We define the weighted function space,
\[
L_f^2 (0,z_F) 
= \left\{ v: (0,z_F) \rightarrow \mathbb{R}  \;\Big|\; \fnorm{v}^2 =  \int_0^{z_F} f(z) v^2(z) \, dz < \infty \right\}
\]
where the integral \eqref{eq:innp} defines an inner product as proven in Appendix \ref{sec:app}, denoted by $L^2_f$ inner product
\begin{equation}\label{eq:innp}
\langle u, v \rangle_f := \int_0^{z_F} f(z) u(z)v(z) \, dz,
\end{equation}
which induces the $L^2_f$ norm $\|v\|_f = \sqrt{\langle v,v \rangle_f}$. Note that $L^2_f$ is a Hilbert space assuming $ f(z)>0$ for $z\in[0,z_F)$ (assumption \eqref{A10}) with $L^2(0,z_F) \subset L^2_f(0,z_F)$ since
\begin{equation}\label{eq:uppBound}
    \fnorm{v}^2 \leq f_{max}\ltwonorm{v}^2 =\linftynorm{f}\ltwonorm{v}^2.  
\end{equation}
\noindent Similarly, we define the $L^2_D$ space having the weight $D(z)$ by
\[
L^2_D(0,z_F) = 
\left\{v: (0,z_F) \rightarrow \mathbb{R} \;\Big|\; \|v\|_D^2 := \int_0^{z_F} D(z) v(z)^2 \, dz < \infty \right\} ,
\]
and the integral \eqref{eq:innD} defines an $L^2_D$ inner product (Proof identical to that of $L^2_f $ in Appendix \ref{sec:app})\vspace{-1mm}
\begin{equation}\label{eq:innD}
\langle u, v \rangle_D := \int_0^{z_F} D(z) u(z) v(z) \, dz\vspace{-1mm}
\end{equation}
with the induced norm $\Dnorm{v} = \sqrt{\ltwoD{v,v}}$.  $L_D^2(0,z_F)$ is a Hilbert space with respect to the $L^2_D$ norm assuming \eqref{A20}, with $L^2(0,z_F) \subset L^2_D(0,z_F)$.\\

 \noindent Also, we define the weighted $H^1$ Sobolev space.\vspace{-1mm}
\begin{eqnarray}
H^1_{D,f}(0,z_F)
&=& \left\{ v \in L^2_f(0,z_F)
\;\middle|\;
v_z-weak \in L^2_D(0,z_F) \right\},\nonumber
\end{eqnarray}
where the integral \eqref{eq:dfinn} defines the $H^1_{D,f}$ inner product (Proof in Appendix \ref{sec:appDf})\vspace{-1mm}
\begin{equation}\label{eq:dfinn}
\ltwoDf{ u, v }
:= \ltwof{ u, v } + \ltwoD{ u_z, v_z }
= \int_0^{z_F} \left( f(z)u(z)v(z) + D(z)u_z(z)v_z(z) \right)\, dz,\vspace{-1mm}
\end{equation}\vspace{-1mm}
\noindent with  the associated induced norm
\[
\Dfnorm{ v} = \left(\ltwoDf{ v, v }\right)^{\frac{1}{2}} =\left( \ltwof{ v, v } + \ltwoD{ u_z, v_z }\right)^{\frac{1}{2}} .
\]
\begin{theorem}The space \( H^1_{D,f}(0,z_F) \) is a Hilbert space with respect to the norm
\[
\|v\|_{D,f}^1 = \left( \int_0^{z_F} f(z) v^2(z) \, dz + \int_0^{z_F} D(z) (v'(z))^2 \, dz \right)^{\frac{1}{2}}
\]
\end{theorem}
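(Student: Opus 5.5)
Since $\ltwoDf{\cdot,\cdot}$ is already an inner product (Appendix \ref{sec:appDf}), only completeness remains. I will take a Cauchy sequence in $H^1_{D,f}$, get limits of the functions and of their derivatives in the weighted $L^2$ spaces, and then identify the derivative limit as the weak derivative of the function limit. The identification is done locally, on subintervals where the weights are bounded below.

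Let $(v_n)$ be Cauchy in $\Dfnorm{\cdot}$. Then $(v_n)$ is Cauchy in $L^2_f(0,z_F)$ and $(v_{n,z})$ is Cauchy in $L^2_D(0,z_F)$. Both spaces are Hilbert spaces, as stated above under \eqref{A10} and \eqref{A20}. So there are $v\in L^2_f$ and $w\in L^2_D$ with
\[
\fnorm{v_n - v}\to 0, \qquad \Dnorm{v_{n,z}-w}\to 0 .
\]
Once we show that $w$ is the weak derivative of $v$, it follows that $v\in H^1_{D,f}$ and $\Dfnorm{v_n-v}^2=\fnorm{v_n-v}^2+\Dnorm{v_{n,z}-w}^2\to 0$, which is completeness.

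The main obstacle is that $f$ and $D$ may vanish at $z_F$. Then convergence in $L^2_f$ does not imply $L^1_{loc}$ convergence near $z_F$ in any uniform way, and we cannot pass to the limit against test functions directly on all of $(0,z_F)$.

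The plan is to localize. Fix $\phi\in C_c^\infty(0,z_F)$ with support in $[a,b]$, where $0<a<b<z_F$. Since $f$ and $D$ are continuous and positive on the compact set $[a,b]\subset[0,z_F)$, they have positive minima there: $f\ge m_f>0$ and $D\ge m_D>0$ on $[a,b]$. (For $f$ near $0$ I use that \eqref{A10} is meant to give continuity on $[0,z_F)$; otherwise I work on $[a,b]$ with $a>0$, which is all that is needed.) Then
\[
\int_a^b |v_n-v|^2\,dz\le m_f^{-1}\fnorm{v_n-v}^2, \qquad \int_a^b |v_{n,z}-w|^2\,dz\le m_D^{-1}\Dnorm{v_{n,z}-w}^2,
\]
so $v_n\to v$ and $v_{n,z}\to w$ in $L^2(a,b)$. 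Passing to the limit by Cauchy–Schwarz in
\[
\int v_n\phi'\,dz=-\int v_{n,z}\phi\,dz
\]
gives $\int v\phi' \,dz=-\int w\phi\,dz$. Since $\phi$ was arbitrary, $v_z=w$ weakly on $(0,z_F)$.

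Hence $v\in L^2_f$ with $v_z\in L^2_D$, that is, $v\in H^1_{D,f}(0,z_F)$, and $v_n\to v$ in the $H^1_{D,f}$ norm, so the space is complete and therefore a Hilbert space. Condition \eqref{A2p2} is not needed for this argument.
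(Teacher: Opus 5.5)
Your proposal is correct and follows essentially the same route as the paper. Both arguments take the limits $v\in L^2_f$ and $w\in L^2_D$ of the Cauchy sequence, then restrict to a compact $[a,b]\subset(0,z_F)$ where $f$ and $D$ are bounded below, which gives $L^2(a,b)$ convergence of $v_n$ and $v_n'$. Passing to the limit in the weak-derivative identity against $\varphi\in C_c^\infty(0,z_F)$ then identifies $w=v_z$.
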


\begin{proof}

\noindent The space $H^1_{D,f}(0,z_F)$  has an associated inner product
\[
\langle u, v \rangle_{D,f} := \ltwof{ u, v } + \ltwoD {u_z, v_z }.
 \]
\noindent To prove that it is a Hilbert space, we let \( \{v_n\} \subset H^1_{D,f} \) be a Cauchy sequence in the induced norm. So, both \( \{v_n\} \) and \( \{v_n'\} \) are Cauchy in the respective weighted \( L^2_f \) and \( L^2_D \) spaces meaning that there exist \( v \in L^2_f \) and \( w \in L^2_D \) such that
\[
v_n \to v \text{ in } L^2_f(0,z_F), \quad v_n' \to w \text{ in } L^2_D(0,z_F).
\]
To prove that \(  w  := v_z\) over $(0,z_F)$,  
we let $\varphi \in C_c^{\infty}(0,z_F)$, i.e. $\varphi(z) \neq 0$ for $z\in [a,b]\subset (0,z_F)$ and zero elsewhere with $\varphi(z)\in C^{\infty}(a,b) \subset L^2(a,b) \subset L^2_f(a,b)$. 
\begin{eqnarray}
    \int_0^{z_f} v_n'(z) \varphi(z) dz &=& [v_n' \varphi]_0^{z_F} - \int_0^{z_f} v_n(z) \varphi'(z) dz = - \int_0^{z_F} v_n(z) \varphi'(z) dz\nonumber\\
    \implies \int_a^{b} v_n'(z) \varphi(z) dz &=&  - \int_a^{b} v_n(z) \varphi'(z) dz.\label{eq:vnloc}
\end{eqnarray}
Then, we have norm equivalence between  $L^2(a,b)$ and $L^2_f(a,b)$ since $f(z)>0$ over $[a,b]$. Thus, $v_n \rightarrow v$ in $L^2(a,b)$. Similarly, we have norm equivalence between  $L^2(a,b)$ and $L^2_D(a,b)$ since $D(z)>0$ over $[a,b]$. Thus, $v'_n \rightarrow w$ in $L^2(a,b)$. This implies we have norm equivalence between $H^1(a,b)$ and $H^1_{D,f}(a,b)$. Taking the limit of \eqref{eq:vnloc} gives
$$\int_a^{b} w(z) \varphi(z) dz =  - \int_a^{b} v(z) \varphi'(z) dz, $$
meaning that $w(z)$ is the weak derivative of $v(z)$ over any sub-interval $[a,b]$ of $(0,z_F)$. Thus,  \(  w  := v_z\) over $(0,z_F)$.  Then, \( v \in H^1_{D,f} \), and
\[
\|v_n - v\|_{D,f} \to 0.
\]
Hence \( H^1_{D,f}(0,z_F) \) is complete and therefore a Hilbert space.
\end{proof}

 \noindent Finally, the set of test functions is given by:
 $$\mathcal{T}=\{v\in H^1_{D,f}(0,z_F)\,|\, v(0)=0\} \subseteq H^1_{D,f} $$
 
 \begin{theorem}
 The set of test functions is a closed subspace of $H^1_{D,f}$, 
     $$\mathcal{T}=\{v\in H^1_{D,f}(0,z_F)\,|\, v(0)=0\} \subseteq H^1_{D,f}. $$
     i.e. a Hilbert Space with respect to the $H^1_{D,f}$ norm. 
 \end{theorem}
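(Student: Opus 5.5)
The plan is to show that the trace map $v\mapsto v(0)$ is a well-defined, continuous linear functional on $H^1_{D,f}(0,z_F)$. Then $\mathcal{T}$ is its kernel, hence a closed linear subspace. Since the previous theorem gives that $H^1_{D,f}(0,z_F)$ is a Hilbert space, $\mathcal{T}$ with the restricted inner product $\ltwoDf{\cdot,\cdot}$ is also complete, i.e.\ a Hilbert space. Linearity of $\mathcal{T}$ is immediate, since $(\alpha u+\beta v)(0)=\alpha u(0)+\beta v(0)=0$.

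The key step is a local estimate near $z=0$, where the weights do not degenerate. Fix $b\in(0,z_F)$, for instance $b=z_F/2$. By \eqref{A10}, \eqref{A20} and continuity (assuming, as the paper implicitly does, that $f$ and $D$ extend continuously to $[0,b]$), there are constants $f_{min},D_{min}>0$ with $f\ge f_{min}$ and $D\ge D_{min}$ on $[0,b]$. Hence for $v\in H^1_{D,f}$,
\begin{equation*}
\|v\|_{H^1(0,b)}^2 \le \max\left(f_{min}^{-1},D_{min}^{-1}\right)\Dfnorm{v}^2 ,
\end{equation*}
exactly as in the norm-equivalence argument used in the proof of the previous theorem. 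Thus $v|_{(0,b)}\in H^1(0,b)$, which embeds continuously into $C([0,b])$ by the one-dimensional Sobolev embedding. Therefore $v$ has a continuous representative on $[0,b]$, $v(0)$ is meaningful, and
\begin{equation*}
|v(0)|\le C_b\|v\|_{H^1(0,b)}\le C\,\Dfnorm{v}.
\end{equation*}
If one prefers to avoid citing the embedding, the bound can be obtained directly. Write $v(0)=v(y)-\int_0^y v_z\,dz$ for $y\in(0,b)$, square, use Cauchy--Schwarz, and average over $y\in(0,b)$.

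Closedness then follows directly. If $v_n\in\mathcal{T}$ and $v_n\to v$ in $H^1_{D,f}$, then $|v(0)|=|v(0)-v_n(0)|\le C\Dfnorm{v-v_n}\to0$, so $v\in\mathcal{T}$. A closed subspace of a Hilbert space is a Hilbert space, which completes the argument.

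The only delicate point is making sense of $v(0)$ at all, since elements of $H^1_{D,f}$ are defined only almost everywhere and the weights may vanish at $z_F$. This is exactly why the trace is taken at $0$ rather than at $z_F$. Uniform positivity of $f$ and $D$ on a neighbourhood $[0,b]$ of $0$, which follows from \eqref{A10}--\eqref{A20} together with continuity up to $z=0$, is what makes the local $H^1$ control, and hence the trace estimate, work.
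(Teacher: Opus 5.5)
Your proof is correct and is essentially the paper's argument. On an interval $[0,b]$ with $b<z_F$ the weights are bounded below, so $H^1_{D,f}$ controls $H^1(0,b)$. The one-dimensional trace estimate then gives $|v(0)|\le C\Dfnorm{v}$, so convergence $v_n\to v$ forces $v(0)=0$. Phrasing this as $\mathcal{T}$ being the kernel of a bounded linear functional, rather than passing to the limit along a Cauchy sequence as the paper does, is only a cosmetic difference. Your remark that continuity of $f$ and $D$ up to $z=0$ is needed for the lower bounds is a fair point that the paper leaves implicit.
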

 \begin{proof}
Let \( \{v_n\} \subset \mathcal{T} \) be a Cauchy sequence.
Since \( H^1_{D,f}(0,z_F) \) is complete, there exists \( v \in  H^1_{D,f}(0,z_F) \) such that \( v_n \to v \) in \(  H^1_{D,f}(0,z_F) \), i.e. $\lim\limits_{n\rightarrow\infty}\Dfnorm{v_n-v} = 0$. It remains to prove that $v(0) = 0$.\vspace{2mm}

\noindent Let $0<\epsilon<z_F$. Since for all $z\in[0,z_F)$, $f(z)>0$ and $D(z)>0$, then we have norm equivalence between $H^1(0,\epsilon)$ and $H^1_{D,f}(0, \epsilon)$. Moreover, $H^1_{D,f}(0, z_F) \subset H^1_{D,f}(0, \epsilon),    $ thus \( v_n \to v \) in \(  H^1_{D,f}(0,\epsilon) \) \vspace{2mm}
 
\noindent By Trace theorem, $\forall v\in H^1(0,\epsilon)$ there exists a $c_T>0$ such that
\begin{equation}\label{eq:traceNE}
    \max\{|v(0)|, |v(\epsilon)|\} \;\leq \; c_T\,\|{v}\|_{H^1(0,\epsilon)}\;\leq \; \dfrac{c_T}{\gamma_1}\|{v}\|_{H^1_{D,f}(0,\epsilon)}
\end{equation}
using norm equivalence. 
Since $v_n - v \in H^1_{D,f}(0,z_F) \subset H^1_{D,f}(0,\epsilon)$, then using trace theorem \eqref{eq:traceNE} we get
\begin{eqnarray}
 \max\{|v_n(0)-v(0)|, |v_n(z_F) - v(z_F) |\} \;\leq \; \dfrac{c_T}{\gamma_1}\|{v_n-v}\|_{H^1_{D,f}(0,\epsilon)}.
\end{eqnarray}
Taking the limit as $n\rightarrow \infty$, and since $v_n(0)=0$, we get that $v(0) = 0$, i.e. $v\in\mathcal{T}$.
\noindent Thus $\mathcal{T}$ is a closed subspace. 
 Moreover, any closed subspace of a Hilbert space is itself a Hilbert space with respect to the  same norm.
 \end{proof}
 \subsection{Existence and Uniqueness}\label{sec:exist}
\noindent To apply Lions Theorem \ref{th:lions} to our firn problem \eqref{eq:system1} for proving the existence and uniqueness of its solution, we 
let
$$V=\mathcal{T}=\{v\in H^1_{D,f}(0,z_F)\,|\,v(0)=0\} \;\mbox{ and } \;H=L^2_f(0,z_f)  $$
Obviously, $V\subset H$, with the injection being continuous.\vspace{2mm}\\
It remains to prove the continuity and coercivity of the bilinear form $\mathcal{A}(.,.)$. Moreover, we prove  the Boundedness of the right-hand term $F(t, \phi) =  -\langle \rho^ {\text{atm}}(t))',\phi\rangle-\mathcal{A}(\rho^ {\text{atm}}(t),\phi)$ in $V$, which implies that there exists $\hat{F}(t) \in L^2(0,T;V^*)$ such that $F(t, \phi) = \inner{\hat{F}(t), \phi}_{D,f}$.

\noindent It should be noted that if $f(z)$ and $D(z)$ are strictly positive functions over $[0,z_F]$, then this implies  we have norm equivalences, where $\exists\; g_1,g_2, \hat{g}_1, \hat{g}_2, b_1, b_2 >0$ such that
 \begin{eqnarray}
 &&  g_1\ltwonorm{v}  \hspace{0.2cm}  \; \leq\;\;\;\fnorm{v} \hspace{0.2cm}  \leq\;\;g_2 \;\ltwonorm{v} \label{eq:eqf}\\
  &&  \hat{g}_1\ltwonorm{v}  \hspace{0.2cm}  \; \leq\;\;\;\Dnorm{v} \hspace{0.2cm}  \leq\;\;\hat{g}_2 \;\ltwonorm{v} \label{eq:eqD}\\
  &&  b_1\fnorm{v}  \hspace{0.2cm}  \; \leq\;\;\;\Dnorm{v} \hspace{0.2cm}  \leq\;\;b_2 \;\fnorm{v}. \label{eq:eqDf}
    \end{eqnarray}
The norm equivalences and a Weighted Poincare Inequality  allow us to complete the proof. However, in practice $f$ and $D$ may be degenerate at $z_f$, hence we loose norm equivalence. Thus, we assume that $f(z_F)\geq 0$ and $D(z_F)\geq 0$ (assumptions \eqref{A10}, \eqref{A20}) and impose in addition to assumption
\eqref{A2pp0},  that $f(z)$ and $D(z)$ have { similar behavior at $z_F$,} as in assumption \eqref{A2p2}.\vspace{2mm}

\noindent We proceed by proving the Weighted Poincare Inequality  for all functions in $\mathcal{T}$ (Corollary \ref{cor:wpi}) by using the Hardy-type inequality (Theorem \ref{th:WPICa}) for absolutely continuous functions on every compact subinterval, which includes  $\mathcal{T}$ (Lemma \ref{lem:tc}). This allows us to prove Theorem \ref{th:4g}.
\begin{remark}\label{rem}
    Let $u(z) \in C_a(0,z_F) $, where $C_a(0,z_F)$ is the set of all functions absolutely continuous on every compact subinterval $[a,b]\subset (0,z_F)$, satisfying $\lim\limits_{x\rightarrow 0} u(x) = 0$.\\Then there exists a function $g \in L^1(0,z_F)$ such that $$u(z) = \int_0^z g(x)dx$$ a.e. on $(0,z_F)$. The converse is also true.
\end{remark}

\begin{theorem}[Weighted Poincare Inequality]\label{th:WPICa}
  Let $v,w \in W(0,z_F)$, where $W(0,z_F)$ is the set of weight functions that are measurable, nonnegative, and finite almost everywhere on $(0,z_F)$. Then,  
  \begin{equation}\label{eq:WPI}
  \|u\|_v = \left(\int_0^{z_F} u^2(z)\;v(z)dz\right)^{\frac{1}{2}} \leq C \left(\int_0^{z_F} (u'(z))^2\;w(z)dz\right)^{\frac{1}{2}} = C \|u'\|_w
  \end{equation}
  holds for every $u(z) \in C_a(0,z_F)$ if and only if
  \begin{eqnarray} \label{condition:WPI}
      B = \sup_{z\in(0,z_F)} \left({ \int_z^{z_F} v(x)dx}\right)^{\frac{1}{2}}\left( \int_0^z \dfrac{1}{w(x)}dx \right)^{\frac{1}{2}} < \infty,
\end{eqnarray}
Moreover, the best constant $C$ in \eqref{eq:WPI} satisfies $B \leq C \leq 2B$
\end{theorem}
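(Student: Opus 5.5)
This is the classical Hardy inequality of Muckenhoupt type, and I will prove it by first deriving the integral representation of $u$ and then treating the two directions separately. By Remark \ref{rem}, every $u\in C_a(0,z_F)$ with $u(0^+)=0$ can be written as $u(z)=\int_0^z g(x)\,dx$ with $g=u'$ a.e. The inequality \eqref{eq:WPI} is therefore equivalent to the Hardy-type bound
$$\int_0^{z_F} v(z)\Big(\int_0^z g\Big)^2 dz \le C^2\int_0^{z_F} w\,g^2 .$$
I will prove necessity (giving $C\ge B$) and sufficiency (giving $C\le 2B$) in turn.

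\textbf{Necessity.} Fix $z\in(0,z_F)$ and test with $g=w^{-1}\chi_{(0,z)}$. The first step is to truncate so that $\int_0^z w^{-1}$ is finite; if it were infinite, a monotone approximation $g_n=\min(w^{-1},n)\chi_{(0,z)}$ would force $\int_z^{z_F}v=0$. For this $g$ we have $u(x)=W(x):=\int_0^x w^{-1}$, which equals $W(z)$ for $x\ge z$. Hence the left side is at least $W(z)^2\int_z^{z_F}v$, while the right side equals $C^2 W(z)$. This gives $\big(\int_z^{z_F}v\big)W(z)\le C^2$, and taking the supremum yields $B\le C$.

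\textbf{Sufficiency.} I will use the standard weight-splitting argument with $h(x)=W(x)^{1/2}$. By Cauchy--Schwarz,
$$\Big(\int_0^z g\Big)^2\le \int_0^z w g^2 h\,dx\cdot\int_0^z \frac{1}{w h}\,dx .$$
Since $dW=w^{-1}dx$, the last factor is $\int_0^z W^{-1/2}dW=2W(z)^{1/2}$. Multiplying by $v(z)$, integrating in $z$, and applying Fubini gives
$$\text{LHS}\le 2\int_0^{z_F} w(x)g(x)^2 W(x)^{1/2}\Big(\int_x^{z_F} v(z)W(z)^{1/2}dz\Big)dx .$$

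\textbf{Main obstacle.} The key step is bounding the inner integral by $2B^2 W(x)^{-1/2}$, which would give $C^2\le 4B^2$, i.e. $C\le 2B$. Set $V(z)=\int_z^{z_F}v$, so that $W(z)\le B^2/V(z)$. Then
$$\int_x^{z_F} vW^{1/2}\le B\int_x^{z_F} v\,V^{-1/2}=B\big[-2V^{1/2}\big]_x^{z_F}\le 2BV(x)^{1/2}\le 2B^2W(x)^{-1/2},$$
where the last inequality again uses $V(x)\le B^2/W(x)$. This step needs care when $V$ or $W$ vanishes or is infinite. I will handle those cases by restricting to $x$ with $0<W(x)<\infty$, and I will justify the change of variables for the absolutely continuous, monotone functions $V$ and $W$ by approximation on compact subintervals. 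Combining the two bounds gives $C\le 2B$, completing the estimate $B\le C\le 2B$.
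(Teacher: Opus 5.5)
Your proof is correct. It differs from the paper only in that the paper gives no argument of its own: it simply invokes Theorem 1.14 of Opic--Kufner \cite{OK90} with $p=q=p'=2$, and that is where the constants $B\le C\le 2B$ come from.

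You instead prove the $p=q=2$ case directly, in two steps:
\begin{itemize}
\item \emph{Necessity.} The truncated test functions $g_n=\min(w^{-1},n)\chi_{(0,z)}$ satisfy $w g_n^2\le g_n$. This gives $W_n(z)V(z)\le C^2$, hence $B\le C$, and it also covers the degenerate case $W(z)=\infty$, which forces $V(z)=0$.
\item \emph{Sufficiency.} The Muckenhoupt weight-splitting with $h=W^{1/2}$ and Tonelli reduce the bound to the estimate $\int_x^{z_F}vW^{1/2}\le 2BV(x)^{1/2}\le 2B^2W(x)^{-1/2}$, which gives exactly $C\le 2B$.
\end{itemize}

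The citation buys brevity and the full range of exponents $1<p\le q<\infty$. Your version buys self-containedness and shows explicitly how vanishing or non-integrable weights are handled. This is precisely the situation in Corollary \ref{cor:wpi}, where $w=D$ may vanish at $z_F$.

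One small robustness point. You rely on Remark \ref{rem} for $u(z)=\int_0^z g$ with $g\in L^1(0,z_F)$, which is stronger than membership in $C_a(0,z_F)$ guarantees in general. You can avoid it: in the sufficiency direction you may assume $\|u'\|_w<\infty$. Then Cauchy--Schwarz gives $\int_0^z|u'|\le\|u'\|_w\,W(z)^{1/2}<\infty$ whenever $W(z)<\infty$. So $u(z)=\int_0^z u'$ holds exactly on the set where your argument uses it.
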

\begin{proof} By \cite[Theorem 1.14]{OK90}
for $p=q=p'=2$, $a=0, b=z_F$.  
\end{proof}

\begin{lemma}\label{lem:tc}
    $\mathcal{T}=\{v\in H^1_{D,f}(0,z_F)\,|\, v(0)=0\} \subseteq C_a(0,z_F) $
\end{lemma}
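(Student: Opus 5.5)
The plan is to work locally on compact subintervals, where the weights are bounded below, and then use the boundary condition $v(0)=0$ to obtain the limit at $0$. Fix $v\in\mathcal{T}$ and a compact subinterval $[a,b]\subset(0,z_F)$.

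\emph{Step 1: $v$ is absolutely continuous on $[a,b]$.} By assumptions \eqref{A10} and \eqref{A20}, $f$ and $D$ are continuous and strictly positive on $[a,b]$, since $b<z_F$. Hence there is $m_{ab}>0$ with $f,D\geq m_{ab}$ on $[a,b]$, and
\[
\|v\|_{H^1(a,b)}^2\leq m_{ab}^{-1}\left(\fnorm{v}^2+\Dnorm{v_z}^2\right)<\infty .
\]
This is the same norm equivalence already used in the proof that $H^1_{D,f}$ is complete. The weak derivative of $v$ on $(0,z_F)$, restricted to $(a,b)$, is the weak derivative there. By the one-dimensional Sobolev characterization, $v$ then has a representative that is absolutely continuous on $[a,b]$, with
\[
v(y)-v(x)=\int_x^y v_z(s)\,ds .
\]
The representatives on different subintervals agree on overlaps, since continuous functions equal a.e. are equal. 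Exhausting $(0,z_F)$ by an increasing sequence of compact intervals therefore gives one continuous representative on $(0,z_F)$ that is absolutely continuous on every compact subinterval.

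\emph{Step 2: $\lim_{x\to 0}v(x)=0$.} This is the step where some care is needed, because the definition of $C_a$ asks for a limit at $0$, while $\mathcal{T}$ only prescribes the trace $v(0)=0$. I will handle it using positivity at the left endpoint. Fix $\epsilon\in(0,z_F)$. Since $f,D>0$ on the compact interval $[0,\epsilon]$, the same argument as in Step 1 gives $v\in H^1(0,\epsilon)$. Therefore $v$ has a representative that is absolutely continuous on the closed interval $[0,\epsilon]$. The trace $v(0)$, as used in the proof that $\mathcal{T}$ is closed via \eqref{eq:traceNE}, is exactly the value of this continuous representative at $0$. Consequently $\lim_{x\to0^+}v(x)=v(0)=0$.

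I expect the only delicate point to be consistency of representatives: the representative on $[0,\epsilon]$ must coincide with the one built in Step 1 on $(0,\epsilon]$. This holds by the same a.e.-equality-plus-continuity argument. Combining the two steps, $v\in C_a(0,z_F)$.

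Equivalently, in the form of Remark \ref{rem}: $v_z\in L^1(0,\epsilon)$ by Cauchy--Schwarz, so $v(z)=\int_0^z v_z\,dx$ near $0$. Global integrability of $v_z$ on all of $(0,z_F)$ is not needed, since membership in $C_a$ only requires local absolute continuity together with the limit at $0$.
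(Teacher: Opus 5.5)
Your argument is correct and follows the same route as the paper: lower bounds for $f$ and $D$ on a compact subinterval $[a,b]\subset(0,z_F)$ give $v\in H^1(a,b)$, hence absolute continuity on each such subinterval. Your Step 2 uses $v\in H^1(0,\epsilon)$, so that the trace condition $v(0)=0$ is the value of the continuous representative and therefore $\lim_{x\to0^+}v(x)=0$; this, together with your gluing of representatives across subintervals, supplies details the paper leaves implicit, so your version is, if anything, more complete.
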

\begin{proof} Let $v \in \mathcal{T}$, then $v(0) = 0$ and $v\in H^1_{D,f}(0,z_F)$.
    Since $f>0, D>0$ on $(0,z_F)$, then for any $[a,b]\subset (0,z_F)$ there exists $f_{a,b},  D_{a,b} >0$ such that 
    \begin{eqnarray}
        &&f_{a,b} \int_a^b  v^2 dz\;\leq\;\int_a^b f(z) v^2 dz\;\leq\; \fnorm{v}^2 = \int_0^{z_F} f(z) v^2 dz\\
         &&D_{a,b} \int_a^b  v_z^2 dz\;\leq\; \int_a^b D(z) v_z^2 dz\;\leq\; \Dnorm{v_z}^2 = \int_0^{z_F} D(z) v_z^2 dz
    \end{eqnarray}
    Therefore,   
    $H^1_{D,f}(0,z_f) \subset H^1(a,b)  $ for all $[a,b] \subset (0,z_F)$. 
    Then, for all $[a,b]\subset (0,z_F)$ and for all  $u\in \mathcal{T}$,  $ u-weak \in L^2(a,b)  \subset L^1(a,b)$ thus there exists a $g(z) \in L^1(a,b)$ such that\\ $u(z)-u(a) = \int_a^z g(t)\;dt$, implying that $u$ is absolutely continuous over $[a,b]$, hence $u \in C_a(0,z_F)$. 
\end{proof}

  \begin{corollary}[Weighted Poincare Inequality over $\mathcal{T}$]\label{cor:wpi}
  Let $u \in \mathcal{T} \subset C_a(0,z_F)$. Then, 
  if 
  \begin{eqnarray}  \label{cond:WI}
B_0=\displaystyle\sup_{z\in(0,z_F)} \left((z_F-z)\int_0^z \dfrac{1}{D(x)}dx \right)^{\frac{1}{2}} <\infty,    
  \end{eqnarray}  the below inequalities hold
  \begin{eqnarray}
  &&\ltwonorm{u}^2 = \int_0^{z_F} u^2(z)dz \leq C_p^2 \int_0^{z_F} (u'(z))^2\;D(z)dz = C_p^2 \|u'\|_D^2 \label{eq:wp11}\\
  &&\fnorm{u}^2 \leq \|f\|_{\infty} \ltwonorm{u}^2 \leq  \|f\|_{\infty} C_p^2 \|u'\|_D^2\label{eq:wp12}
  \end{eqnarray}
for $C_p<2B_0$.
  \end{corollary}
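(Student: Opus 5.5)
The plan is to obtain Corollary \ref{cor:wpi} as a direct specialization of Theorem \ref{th:WPICa}, applied to functions of $\mathcal{T}$ via Lemma \ref{lem:tc}. We choose the weights $v\equiv 1$ and $w=D$ in Theorem \ref{th:WPICa}. Both are admissible elements of $W(0,z_F)$: the weight $1$ trivially, and $D$ because it is continuous and hence measurable, nonnegative by \eqref{A20}, and finite everywhere since it is bounded by $D_{max}$.

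First we check that every $u\in\mathcal{T}$ is an admissible test function. By Lemma \ref{lem:tc}, $u\in C_a(0,z_F)$. The normalization $\lim_{x\to 0}u(x)=0$ required in Remark \ref{rem} follows from $u(0)=0$ together with continuity of $u$ near $0$. Near $0$ this continuity holds because $f,D>0$ on $[0,\epsilon]$, which gives the norm equivalence $H^1_{D,f}(0,\epsilon)\simeq H^1(0,\epsilon)$ already used in the proof that $\mathcal{T}$ is closed. Consequently $u\in H^1(0,\epsilon)\subset C[0,\epsilon]$, and $u(z)=\int_0^z u'(x)\,dx$.

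Next we compute the constant $B$ of \eqref{condition:WPI} for these weights. Since $\int_z^{z_F}1\,dx=z_F-z$, we get
\[
B=\sup_{z\in(0,z_F)}\left((z_F-z)\int_0^z\frac{dx}{D(x)}\right)^{\frac12}=B_0 .
\]
Under hypothesis \eqref{cond:WI}, $B<\infty$, so Theorem \ref{th:WPICa} gives \eqref{eq:WPI} with best constant $C$ satisfying $B_0\le C\le 2B_0$. Setting $C_p:=C$ yields \eqref{eq:wp11}. The statement asks for $C_p<2B_0$, whereas the theorem only gives $\le 2B_0$, so we would state $C_p\le 2B_0$. Alternatively, we take any $C_p$ in $[C,2B_0]$, which is nonempty, keeping in mind that the strict inequality is cosmetic.

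Finally, \eqref{eq:wp12} follows from the pointwise bound $f\le\|f\|_\infty$ of \eqref{A10}:
\[
\|u\|_f^2=\int_0^{z_F} f u^2\,dz\le\|f\|_\infty\|u\|_2^2,
\]
and we then chain this with \eqref{eq:wp11}. The only delicate point is verifying that $\mathcal{T}$ functions satisfy the hypotheses of Theorem \ref{th:WPICa}, namely local absolute continuity and vanishing at $0$. That is handled by Lemma \ref{lem:tc} together with the local $H^1$ regularity near $z=0$. The degeneracy of $D$ at $z_F$ enters only through the finiteness of $B_0$, which is assumed, so no further work is needed there.
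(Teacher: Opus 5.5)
Your proposal is correct and follows the paper's intended (implicit) argument: specialize Theorem \ref{th:WPICa} to the weights $v\equiv 1$, $w=D$, so that $B=B_0$, use Lemma \ref{lem:tc} to make $\mathcal{T}$ admissible, and then use $f\le\|f\|_\infty$ to get the second inequality. You are also right that the cited theorem only yields $C_p\le 2B_0$ rather than the strict inequality, and your check that $\lim_{z\to 0}u(z)=0$ (via local $H^1$ regularity near $0$) fills in a point that Lemma \ref{lem:tc} leaves implicit.
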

  \begin{lemma}Assume that the condition \eqref{cond:WI} is satisfied. 
Then the  following inequality 
\begin{eqnarray}
   \sqrt{f(z)} |v(z)| \leq  C_0 \|v\|_{D, f}, \qquad z\in [0, z_F],\label{eq:zf}
\end{eqnarray}
holds for all $ v \in \mathcal T$, where $C_0$ is  some positive constant that only depends on $f, \,D$ and $z_F$.
\end{lemma}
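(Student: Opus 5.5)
Since $v(0)=0$ and $v$ is absolutely continuous on compact subintervals (Lemma \ref{lem:tc}, Remark \ref{rem}), the natural starting point is the representation $v(z)=\int_0^z v'(x)\,dx$. We bound $|v(z)|$ pointwise by Cauchy--Schwarz with the weight $D$:
\[
|v(z)| \le \int_0^z |v'(x)|\,dx \le \left(\int_0^z \frac{dx}{D(x)}\right)^{\frac12}\left(\int_0^z D(x)\,(v'(x))^2\,dx\right)^{\frac12} \le \left(\int_0^z \frac{dx}{D(x)}\right)^{\frac12}\|v'\|_D .
\]
Multiplying by $\sqrt{f(z)}$ reduces the problem to showing that $\Phi(z):=f(z)\int_0^z D(x)^{-1}\,dx$ is bounded on $[0,z_F)$, with $\sup\Phi$ depending only on $f,D,z_F$. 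Then $C_0=(\sup\Phi)^{1/2}$ works, since $\|v'\|_D\le\|v\|_{D,f}$. At $z=z_F$ the left side of \eqref{eq:zf} is either $0$ (if $f(z_F)=0$) or obtained by continuity.

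\textbf{Bounding $\Phi$ away from $z_F$.} Fix $\epsilon\in(0,z_F)$. On $[0,z_F-\epsilon]$ the function $D$ is continuous and positive, so $\int_0^z D^{-1}\le (z_F-\epsilon)/\min_{[0,z_F-\epsilon]}D$. Together with $f\le f_{max}$, this bounds $\Phi$ there.

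\textbf{Bounding $\Phi$ near $z_F$ (the main obstacle).} Here $\int_0^z D^{-1}$ may blow up, so it must be compensated by the decay of $f$. Condition \eqref{cond:WI} gives $\int_0^z D^{-1}\le B_0^2/(z_F-z)$. Assumption \eqref{A2p2} gives $f\le K D$ with $K=\sup f/D$. It therefore suffices to show $D(z)\le L(z_F-z)$ near $z_F$. This follows from the Lipschitz assumption \eqref{A2pp0} combined with $D(z_F)\ge 0$, provided $D(z_F)=0$ in the degenerate case; then
\[
\Phi(z)\le K L (z_F-z)\cdot \frac{B_0^2}{z_F-z}=KLB_0^2 .
\]
If instead $D(z_F)>0$, then $D$ is bounded below on all of $[0,z_F]$, and the bound from the previous step extends to the whole interval.

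Combining the two cases yields $C_0^2=\max\{KLB_0^2,\ f_{max}(z_F-\epsilon)/\min_{[0,z_F-\epsilon]}D\}$, or simply $f_{max}z_F/\min D$ in the nondegenerate case. The delicate point is the endpoint estimate, where \eqref{cond:WI}, \eqref{A2p2} and \eqref{A2pp0} must be used together. If Lipschitz control on $D$ turned out to be insufficient, I would instead argue directly that $f(z)\int_0^z D^{-1}\le K\,D(z)\int_0^z D^{-1}$ and bound the latter using \eqref{cond:WI}.
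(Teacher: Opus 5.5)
Your argument is correct and is essentially the paper's proof. You use Cauchy--Schwarz against the weight $D$, then $f\le KD$ from \eqref{A2p2}, the Lipschitz bound $D(z)\le L(z_F-z)$ when $D(z_F)=0$, and \eqref{cond:WI} to obtain the constant $KLB_0^2$; the paper factors out $\|\sqrt{f/D}\|_\infty$ first, which is the same thing, and it handles the case $D(z_F)>0$ by $\int_0^{z_F}D^{-1}<\infty$ as you do.

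Two remarks. In the degenerate case the bound $KLB_0^2$ already holds on all of $(0,z_F)$, so your split at $z_F-\epsilon$ is unnecessary. Your closing fallback would not work on its own, because \eqref{cond:WI} alone does not control $D(z)/(z_F-z)$; the Lipschitz assumption is what supplies that.
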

  \begin{proof}
Since $v \in T\subset C_a(0, z_F)$, and we have  
$$
\sqrt{f(z)}  v(z) =  \sqrt{f(z)} \int_0^z v_z(t) dt, \quad \forall
 z\in [0, z_F].
$$
Therefore
$$
\sqrt{f(z)}  |v(z)| \leq  \left\|\frac{\sqrt f}{ \sqrt D} \right\|_{\infty} \left(D(z)\int_0^z \frac{1}{D(t)}dt\right)^{\frac{1}{2}} \|v_z\|_{D}
, \quad \forall
 z\in [0, z_F].
$$
We claim that    
$$
 D(z)\int_0^z \frac{1}{D(t)}dt <\infty, \quad \forall z\in [0, z_F].
$$
Indeed, if $D(z_F) \not= 0$, we get 
$$
D(z)\int_0^z \frac{1}{D(t)}dt  \leq \|D\|_{{\infty}} \int_0^{z_F} \frac{1}{D(t)}dt< \infty.
$$
Assume now that $D(z_F) = 0$. Since $D$ is Lipschitz continuous \eqref{A2pp0}, then there exists $L > 0$ such that
$$D(z) = D(z) -D(z_F)  \leq L\; |z-z_F|  .$$ Consequently 
$$
D(z)\int_0^z \frac{1}{D(t)}dt \leq L\;B^2 <\infty. 
$$
The proof is then accomplished by taking 
$C_0=\displaystyle { \left\|\frac{\sqrt f}{ \sqrt D} \right\|_{\infty} }\|D\|_{\infty} \int_0^{z_F} \frac{1}{D(t)}dt$ if $D(z_F) \neq 0$, and\\ $C_0 = \displaystyle \left\|\frac{\sqrt f}{ \sqrt D} \right\|_{\infty}L\; B^2$ if $D(z_F) = 0$.
  \end{proof}

\begin{theorem}\label{th:4g} Under assumptions  \eqref{A10}, \eqref{A20}, \eqref{A2pp0}, \eqref{eq:rhoatm}, 
  \eqref{A2p2}, and \eqref{cond:WI}, then the bilinear form $$ \mathcal{A}(v,w)=\langle D v_z, w_z \rangle -\langle  (\mathcal{M} D+f\mathcal{F})v, w_z \rangle + \mathcal{G} \langle v,w \rangle + \mathcal{F} f(z_F) v(z_F,t) w(z_F)$$ 
satisfies:
\begin{enumerate}
    \item $|\mathcal{A}(v,w)|\leq C\Dfnorm{v}.\Dfnorm{w}$,\; $\forall v\in H^1_{D,f}$
    \item $\mathcal{A}(v,v)\geq C_1\Dfnorm{v}^2-C_2\fnorm{v}^2 $,\; $\forall v\in \mathcal{T}$
    \item {$\displaystyle \left(\int_0^T | \ltwoinner{\rho^ {\text{atm}}(t))',v}+\mathcal{A}(\rho^ {\text{atm}}(t),v)|^2 dt\right)^{\frac{1}{2}} \leq C_3 
    \Dfnorm{v}$, \; $\forall v\in H^1_{D,f}$.}
\end{enumerate}
for $0<C,\,C_1,\,C_2,\,C_3<\infty$.
\end{theorem}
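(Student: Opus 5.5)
I would prove the three items separately, treating $\mathcal{A}$ term by term and using only Cauchy--Schwarz, assumption \eqref{A2p2}, the weighted Poincar\'e inequality of Corollary \ref{cor:wpi}, and the pointwise trace bound \eqref{eq:zf}.

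\textbf{Continuity.} Set $K_1=\sup D/f$ and $K_2=\sup f/D$, both finite by \eqref{A2p2}.
\begin{itemize}
\item The diffusion term satisfies $|\langle Dv_z,w_z\rangle|\le \Dnorm{v_z}\Dnorm{w_z}$.
\item For the convection term, write $\mathcal{M}D+f\mathcal{F}=\sqrt{D}\,\bigl(\mathcal{M}\sqrt{D}+\mathcal{F}f/\sqrt{D}\bigr)$. The bracket equals $\sqrt f\,\bigl(\mathcal{M}\sqrt{D/f}+\mathcal{F}\sqrt{f/D}\bigr)$, so it is bounded by $\bigl(\mathcal{M}\sqrt{K_1}+\mathcal{F}\sqrt{K_2}\bigr)\sqrt f$. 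Cauchy--Schwarz then gives $|\langle(\mathcal{M}D+f\mathcal{F})v,w_z\rangle|\le c\,\fnorm{v}\Dnorm{w_z}$.
\item The zero-order term $\mathcal{G}\langle v,w\rangle$ is an unweighted $L^2$ product, which $\fnorm{\cdot}$ does not control near $z_F$. For $v,w\in\mathcal{T}$ I would use \eqref{eq:wp11}, $\ltwonorm{v}\le C_p\Dnorm{v_z}$. For general $v\in H^1_{D,f}$ I would split $v=(v-v(0))+v(0)$. The constant $|v(0)|$ is controlled by the trace bound on $(0,\epsilon)$ from the proof that $\mathcal{T}$ is closed.
\item The boundary term $\mathcal{F}f(z_F)v(z_F)w(z_F)$ is handled by \eqref{eq:zf}: $f(z_F)|v(z_F)||w(z_F)|\le C_0^2\Dfnorm{v}\Dfnorm{w}$. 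If $f(z_F)=0$ the term vanishes. Again \eqref{eq:zf} is stated on $\mathcal{T}$, so for general $v$ I would apply the same splitting.
\end{itemize}

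\textbf{G\aa rding inequality.} For $v\in\mathcal{T}$ I would drop the nonnegative terms $\mathcal{G}\ltwonorm{v}^2$ and $\mathcal{F}f(z_F)v(z_F)^2$. The convection term is bounded by $c\fnorm{v}\Dnorm{v_z}\le \tfrac12\Dnorm{v_z}^2+\tfrac{c^2}{2}\fnorm{v}^2$. This gives
$$\mathcal{A}(v,v)\ge \tfrac12\Dnorm{v_z}^2-\tfrac{c^2}{2}\fnorm{v}^2 = \tfrac12\Dfnorm{v}^2-\bigl(\tfrac12+\tfrac{c^2}{2}\bigr)\fnorm{v}^2 .$$
Alternatively, \eqref{eq:wp12} already yields full coercivity on $\mathcal{T}$, with $C_2$ then absorbing the rest.

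\textbf{Right-hand side.} Since $\rho^{\rm atm}(t)$ is constant in $z$, its $z$-derivative is zero. Then
$$\mathcal{A}(\rho^{\rm atm},v)=\rho^{\rm atm}(t)\Bigl[-\langle \mathcal{M}D+f\mathcal{F},v_z\rangle+\mathcal{G}\int_0^{z_F}v\,dz+\mathcal{F}f(z_F)v(z_F)\Bigr].$$
\begin{itemize}
\item The first term is bounded by $\|(\mathcal{M}D+f\mathcal{F})/\sqrt D\|_{L^2}\Dnorm{v_z}$. This is finite because $f/\sqrt D\le \sqrt{K_2}\sqrt f$ and $f,D$ are bounded.
\item The integral $\int_0^{z_F} v\,dz\le\sqrt{z_F}\ltwonorm{v}$ and the trace term are handled exactly as in the continuity step.
\item The time-derivative term gives $|(\rho^{\rm atm})'(t)|\,\bigl|\int_0^{z_F} v\,dz\bigr|$.
\end{itemize}
Squaring and integrating in $t$ gives $C_3$ of the form $c\,\|\rho^{\rm atm}\|_{H^1(0,T)}$, using \eqref{eq:rhoatm} and the embedding $H^1(0,T)\subset L^\infty(0,T)$.

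\textbf{Main obstacle.} The delicate point is every appearance of the unweighted $L^2$ norm and of the trace at $z_F$. Near the degenerate end these are not dominated by $\fnorm{\cdot}$, and the available tools (Corollary \ref{cor:wpi} and \eqref{eq:zf}) are proved only on $\mathcal{T}$. I would first try to reduce the ``$\forall v\in H^1_{D,f}$'' claims to $\mathcal{T}$ via the constant shift $v-v(0)$, bounding $|v(0)|$ by the local trace estimate. If that proves awkward, I would simply state items 1 and 3 on $V=\mathcal{T}$, which is all that Lions' theorem requires.
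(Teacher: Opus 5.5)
Your proof is correct and uses the same ingredients as the paper's: term-by-term Cauchy--Schwarz with the weights balanced through \eqref{A2p2}, the Poincar\'e inequality \eqref{eq:wp11} for the unweighted $L^2$ term, the pointwise bound \eqref{eq:zf} for the boundary term, and Young's inequality for the G\aa rding estimate. Your fixed splitting with constant $\tfrac12$ is just the paper's $\epsilon$-argument with a particular $\epsilon$.

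Where you depart from the paper, you are more careful than it is.
\begin{itemize}
\item The paper applies \eqref{eq:wp11} and \eqref{eq:zf}, which are only established on $\mathcal{T}$, directly to arbitrary $v,w\in H^1_{D,f}$. Your shift $v=(v-v(0))+v(0)$ handles this correctly: $v-v(0)\in\mathcal{T}$, and $|v(0)|$ is controlled by the local trace estimate \eqref{eq:traceNE} on $(0,\epsilon)$. This gives $\ltwonorm{v}\le C'\Dfnorm{v}$ and the bound on $\sqrt{f(z_F)}\,|v(z_F)|$ on all of $H^1_{D,f}$. So your fallback of restricting items 1 and 3 to $\mathcal{T}$ is unnecessary.
\item In item 3, the paper bounds $\mathcal{A}(\rho^{\text{atm}}(t),v)$ by quoting item 1 with a nonzero constant as first argument. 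For a constant, the step $\ltwonorm{v}\le C_p\Dnorm{v_z}$ used in item 1 fails, since the right-hand side vanishes. Your explicit formula for $\mathcal{A}(\rho^{\text{atm}},v)$ sidesteps this.
\item In item 1, bounding the whole convection coefficient through $\sqrt{D/f}$ and $\sqrt{f/D}$ removes the paper's use of Poincar\'e in the $\mathcal{M}D$ term altogether.
\end{itemize}

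One minor point concerns your aside that \eqref{eq:wp12} gives ``full coercivity'' on $\mathcal{T}$. It only controls the principal part, in the sense that $\Dnorm{v_z}$ is an equivalent norm on $\mathcal{T}$. The convection term still needs the $-C_2\fnorm{v}^2$ correction unless a smallness condition such as $c\,C_p\linftynorm{f}^{1/2}<1$ holds.
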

\begin{proof}
We prove the three properties of the bilinear form \( \mathcal{A}(\cdot,\cdot) \) and the right-hand side.
\begin{enumerate}
    \item {\bf Continuity on \( H^1_{D,f} \)}:
We estimate each term in \( \mathcal{A}(v,w) \) using Cauchy--Schwarz and the weighted norms definitions and equivalence relations.
\begin{eqnarray}
\big|\langle D v_z, w_z\rangle\big|
=\big|\langle D^{\frac{1}{2}} v_z, D^{\frac{1}{2}} w_z\rangle\big| 
&\le& \ltwonorm{ D^{\frac{1}{2}} v_z} \ltwonorm{ D^{\frac{1}{2}} w_z} = \Dnorm{v_z} \Dnorm{w_z} \nonumber\\
&\leq& \|v\|_{D,f}\,\|w\|_{D,f}.\label{eq:1t}
   \end{eqnarray}
\noindent Since $ \|v\|_D \leq \|D^{\frac{1}{2}}\|_{\infty}\|v\|_2$  \;and by \eqref{eq:wp11} $\exists C_p>0$ such that $\ltwonorm{v} \leq C_p \Dnorm{v_z}$, we get
\begin{eqnarray}
\mathcal{M}\,\big|\langle Dv,w_z\rangle\big|  &=& \mathcal{M}\,\big|\langle D^{\frac{1}{2}}v,D^{\frac{1}{2}}w_z\rangle\big| \, \le \, \mathcal{M}\|D^{\frac{1}{2}}v\|_2\;\|D^{\frac{1}{2}}w_z\|_2 = \mathcal{M}\,\|v\|_D\|w_z\|_D    \nonumber\\
&\leq & \mathcal{M} \|v\|_D\|w\|_{D,f}\;\leq \; \mathcal{M} \,\|{D}^\frac{1}{2}\|_{\infty}\,\,\|v\|_{2}\|w\|_{D,f}\nonumber\\
&\leq & C_p \mathcal{M} \,\|{D}^\frac{1}{2}\|_{\infty}\,\,\|v\|_{D,f}\|w\|_{D,f}\label{eq:2t}
\end{eqnarray}
Similarly,  we get
\begin{eqnarray}
    | \langle f\mathcal{F} v, w_z \rangle| = \mathcal{F} \left|\inner{ \sqrt{\dfrac{f}{D}} f^{\frac{1}{2}}v, D^{\frac{1}{2}}w_z }\right|   &\leq& \mathcal{F}\linftynorm{\sqrt{{f}/{D}}}   \fnorm{v} \Dnorm{w_z} \nonumber\\
    &\leq& \mathcal{F} \linftynorm{\sqrt{{f}/{D}}}  \|v\|_{D,f} \|w\|_{D,f}\qquad\\
    \mathcal{G} \inner{ v, w } \;\le\; \mathcal{G} \, \ltwonorm{v}  \, \ltwonorm{w} \;\leq\;\mathcal{G}C_p^2 \|v_z\|_D\|w_z\|_D &\leq& \mathcal{G} C_p^2 \|v\|_{D,f} \, \|w\|_{D,f} \, .
    \end{eqnarray} 
By \eqref{eq:zf} we have that $\sqrt{f(z_F)}|v(z_F,t)| \leq 
C_0 \Dfnorm{v}$ and $\sqrt{f(z_F)}|w(z_F)| \leq 
C_0 \Dfnorm{w}$ which implies that
\begin{eqnarray}
|\mathcal{F} f(z_F) v(z_F,t) w(z_F)| 
= \mathcal{F} f(z_F) |v(z_F,t)| \,|w(z_F)| 
&\le& \mathcal{F} \,C_0^2\, \|v\|_{D,f} \,\, \|w\|_{D,f} 
\end{eqnarray}
\noindent Thus,
\begin{eqnarray}
|\mathcal{A}(v, w)| 
&\le & |\langle D v_z, w_z \rangle| + \mathcal{M} |\langle D v, w_z \rangle| 
   + |\langle f \mathcal{F} v, w_z \rangle| + \mathcal{G} |\langle v, w \rangle| + \mathcal{F} |f(z_F) v(z_F,t) w(z_F)|
  \nonumber \\
&\le &\left(1+ C_p \mathcal{M} \,\|{D}^\frac{1}{2}\|_{\infty} + {\mathcal{G}}\,C_p^2+\mathcal{F} \linftynorm{\sqrt{\dfrac{f}{D}}}+\mathcal{F} \,C_0^2\right)\;\|v\|_{D,f} \|w\|_{D,f} 
\end{eqnarray}
\noindent where 
$
C := 1+ C_p \mathcal{M} \,\|{D}^\frac{1}{2}\|_{\infty} + {\mathcal{G}}\,C_p^2+\mathcal{F} \linftynorm{\sqrt{f/D}}+\mathcal{F} \,C_0^2
>1$ and $C<\infty$ by assumptions \eqref{A2p2} and \eqref{A20}. Therefore, the bilinear form $\mathcal{A}$ is continuous on $H^1_{D,f}$, i.e.,
\[
|\mathcal{A}(v, w)| \le C \, \|v\|_{D,f} \, \|w\|_{D,f}, \quad \forall v, w \in H^1_{D,f}.
\]
\item {\bf Coercivity:} Let \( v \in \mathcal{T} \subset H^1_{D,f} \) with $\Dnorm{v_z}^2 = \Dfnorm{v}^2 - \fnorm{v}^2$, then
\begin{eqnarray}
\mathcal{A}(v,v) 
&=& \Dnorm{v_z}^2 +\mathcal{G} \|v\|_2^2- \mathcal{M}\langle   D  v, v_z \rangle - \mathcal{F}\langle   f  v, v_z \rangle + \mathcal{F} f(z_F) v(z_F,t) w(z_F) \nonumber\\
&\geq &\Dfnorm{v}^2 - \fnorm{v}^2 - \mathcal{M}\langle   D  v, v_z \rangle - \mathcal{F}\langle   f  v, v_z \rangle
\end{eqnarray}
and by Cauchy Schwarz inequality
\begin{eqnarray}
    \mathcal{M}\inner{  D  v, v_z } &=&  \mathcal{M}\inner{  \dfrac{D^{\frac{1}{2}}}{f^{\frac{1}{2}}}  f^{\frac{1}{2}}v, D^{\frac{1}{2}}v_z } \leq \mathcal{M} \linftynorm{\sqrt{D/f}}\fnorm{v} \Dnorm{v_z} \nonumber\\
    \mathcal{F}\inner{  f  v, v_z}&=& \mathcal{F} \inner{  \dfrac{f^{\frac{1}{2}}}{D^{\frac{1}{2}}} f^{\frac{1}{2}}v, D^{\frac{1}{2}}v_z } \;\leq \;  \mathcal{F} \linftynorm{\sqrt{f/D}}\fnorm{v} \Dnorm{v_z}    \nonumber\\
    \therefore \inner{ (\mathcal{M} D + f \mathcal{F} ) v, v_z }
    &\leq & (\mathcal{M} \,{b}_2 \,  + \mathcal{F}\,\tilde{b}_2)\,  \,\fnorm{v} \,\Dnorm{v_z} \;\leq\;c\,  \,\fnorm{v} \,\Dfnorm{v}  \nonumber\\
    &\leq &c\left( \dfrac{\epsilon}{2} \Dfnorm{v}^2 + \frac{1}{2 \epsilon} \|v\|_f^2 \right) \qquad \qquad  \label{eqMDFcoer2}
\end{eqnarray}
\noindent by applying Young's inequality  for any \(\epsilon > 0\) 
where $c =  (\mathcal{M} \,{b}_2 \,  + \mathcal{F}\,\tilde{b}_2)$, ${b}_2 = \linftynorm{\sqrt{D/f}} < \infty$ and $ \tilde{b}_2 = \linftynorm{\sqrt{f/D}}<\infty $ by \eqref{A2p2}.
\noindent By replacing \eqref{eqMDFcoer2} in the bilinear form, we obtain 
\begin{eqnarray}
\mathcal{A}(v,v) &\ge&  \|v_z\|_D^2  - \langle ( \mathcal{M} D + f \mathcal{F} ) v, v_z \rangle \nonumber\\
&\geq & (1 - c\epsilon/2 ) \Dfnorm{v}^2- (1+ c/(2\epsilon))  \|v\|_f^2  \qquad
\end{eqnarray}
\noindent Let \(\epsilon > 0\) be sufficiently small so that $1 - \epsilon  c/2 > 0$, which concludes  the proof.
\item {\bf Boundedness of the forcing term:}

Let \( v \in H^1_{D,f}\), we get by continuity of $\mathcal{A}$ and \eqref{eq:wp11} then:
\begin{eqnarray}
|\langle (\rho^ {\text{atm}})'(t), v \rangle + \mathcal{A}(\rho^ {\text{atm}}(t), v)| &\leq& |\langle (\rho^ {\text{atm}})'(t), v \rangle| + |\mathcal{A}(\rho^ {\text{atm}}(t), v)| \nonumber\\
&\leq & \|(\rho^ {\text{atm}})'(t) \|_2\,\|v \|_2 + C \|\rho^ {\text{atm}}(t)\|_{D,f} \|v\|_{D,f}\nonumber\\
&\leq &C_p\ltwonorm{(\rho^ {\text{atm}})'(t) } \, \Dnorm{v_z} + C \|\rho^ {\text{atm}}(t)\|_{f} \|v\|_{D,f}\nonumber\\
&=& C_p z_F|(\rho^ {\text{atm}})'(t)|\Dnorm{v_z} +C|\rho^ {\text{atm}}(t)|\ltwonorm{f^{\frac{1}{2}}}^2\|v\|_{D,f}\nonumber\\
&\leq & \tilde{C_2}(|(\rho^ {\text{atm}})'(t)| + | \rho^ {\text{atm}}(t)|) \Dfnorm{v}
\end{eqnarray}
where $\tilde{C_2}= \max\left\{C_p \,z_F,C\ltwonorm{f^{\frac{1}{2}}}^2\right\}$. 
Then, for $C_3 = \sqrt{3}\; \tilde{C_2} \|\rho^{\text{atm}}\|_{H^1(0,T)}<\infty$ we have
\begin{eqnarray}
\hspace{-2mm}\int_0^T | \ltwoinner{\rho^ {\text{atm}}(t))',v}+\mathcal{A}(\rho^ {\text{atm}}(t),v)|^2\; dt &\leq & 3\, \tilde{C_2}^2 \|\rho^{\text{atm}}\|^2_{H^1(0,T)} \Dfnorm{v}^2 \;=\; C_3^2\;\Dfnorm{v}^2 \nonumber\end{eqnarray}
Moreover, $F(t,\phi) = \langle (\rho^ {\text{atm}})'(t), \phi \rangle + \mathcal{A}(\rho^ {\text{atm}}(t), \phi)$ is linear in $\phi$ by the bilinearity of the inner product and $\mathcal{A}(.,.)$.
\end{enumerate}
Hence, all three properties in the theorem hold which implies that \eqref{eq::system} or \eqref{eq:system1} has a unique solution by Lion's Theorem \ref{th:lions}.
\end{proof}
\section{Discretization in space and time}\label{sec:discch}
We first rescale \eqref{eq:Witrant} to the unit square, then we discretize the obtained variational problem in time using a finite diﬀerence Euler-implicit scheme, followed by space discretization using $\mathbb{P}1$ finite element method. Note that we consider $f(z_F) \geq 0$.
\subsection{Rescaling} We rescale both the time interval $[0,T_e]$ and the space interval $[0,z_f]$ to the unit interval $[0,1]$ by letting $\tilde{t} = t/T_e$, $\tilde{z} = z/z_F$, $\tilde{\rho}\, (\tilde{t} ,\tilde{z} ) = \rho\, (t,z)$, $\tilde{f}(\tilde{z} ) =f(z)$ and $\tilde{D}(\tilde{z} ) =D(z)$, where by the chain rule we have: $$\dfrac{\partial  \rho }{\partial t}= \dfrac{1}{T_e}\dfrac{\partial \tilde{\rho}}{\partial \tilde{t}}, \;\;\dfrac{\partial  \rho }{\partial z}= \dfrac{1}{z_F}\dfrac{\partial \tilde{\rho}}{\partial \tilde{z}},
\;\;\dfrac{\partial ^2  \rho }{\partial z^2}= \dfrac{1}{z_F^2}\dfrac{\partial ^2 \tilde{\rho}}{\partial \tilde{z}^2}, \;\; \dfrac{\partial  f }{\partial z}= \dfrac{1}{z_F}\dfrac{\partial \tilde{f}}{\partial \tilde{z}},  \;\; \dfrac{\partial  D }{\partial z}= \dfrac{1}{z_F}\dfrac{\partial \tilde{D}}{\partial \tilde{z}} .$$
Then, system \eqref{eq:Witrant} becomes for $\tilde{t} \in [0,1]$ and $
\tilde{z} \in [0,1]$

\begin{equation} \label{eq:RescWitrant}
\begin{cases}
{\dfrac{1}{T_e}\dfrac{\partial}{\partial \tilde{t}}[\tilde{\rho} \tilde{f}] + \dfrac{1}{z_F}\dfrac{\partial}{\partial \tilde{z}}[\tilde{\rho} \tilde{f}\mathcal{F}] + \tilde{\rho} \mathcal{G} = \dfrac{1}{z_F}\dfrac{\partial}{\partial \tilde{z}}\left[\tilde{D}\left(\dfrac{1}{z_F}\dfrac{\partial\tilde{\rho}}{\partial \tilde{z}} - \tilde{\rho} \mathcal{M}\right)\right]}, \\
{\tilde{\rho}(0,\tilde{t}) = \tilde{\rho}^\text{atm}(t)}, \\
{\tilde{D}(1)\left(\dfrac{1}{z_F}\dfrac{\partial\tilde{\rho}}{\partial \tilde{z}}(1,\tilde{t}) - \mathcal{M} \rho(1,\tilde{t})\right) = 0},  \\
{\tilde{\rho}(\tilde{z},0) = 0}
\end{cases}
\end{equation}
Replacing in \eqref{eq:RescWitrant} $\tilde{\rho}$ by $\rho$,  $\tilde{t}$ by $t \in [0,1]$,  $\tilde{z}$ by $z \in [0,1]$, $\tilde{D}$ by $D$, we get the following semi-variational form
\begin{eqnarray}
&&\dfrac{1}{T_e}\int_0^{1} f \rho_t \phi \hspace{0.1cm} dz+ \dfrac{\mathcal{F}}{z_F}\int_0^{1} (f \rho )_z \phi  \hspace{0.1cm} dz+ \mathcal{G} \int_0^{1} \rho \phi  \hspace{0.1cm} dz = \dfrac{1}{z_F}\int_0^{1} [D (\dfrac{\rho_z}{z_F} - \rho \mathcal{M})]_z \phi dz\nonumber\\
&&\dfrac{1}{T_e}\int_0^{1} f \rho_t \phi \hspace{0.1cm} dz+ \dfrac{\mathcal{F}}{z_F}f(1) \rho(1,t) \phi(1) - \dfrac{\mathcal{F}}{z_F}\int_0^{1} f \rho  \phi_z  \hspace{0.1cm} dz+ \mathcal{G} \int_0^{1} \rho \phi  \hspace{0.1cm} dz =- \dfrac{1}{z_F}\int_0^{1} D (\dfrac{\rho_z}{z_F} - \rho \mathcal{M}) \phi_z dz\nonumber\\
&&\therefore \langle f\rho_t , \phi \rangle + \dfrac{T_e}{z_F} \mathcal{F} f(1) \rho(1,t) \phi(1) -  \dfrac{T_e}{z_F}\mathcal{F} \langle f \rho , \phi_z \rangle + T_e\mathcal{G} \langle \rho, \phi \rangle +\dfrac{T_e}{z_F^2}\langle D \rho_z, \phi_z \rangle - \dfrac{T_e}{z_F}\mathcal{M} \langle D \rho, \phi_z \rangle  = 0.\qquad \label{eq:rescvar}
\end{eqnarray}
For ${t} \in [0,1]$ and $
{z} \in [0,1]$,  let the rescaled bilinear form $\mathcal{A}$ be given by
\begin{equation}\label{eq:rescbil}
\mathcal{A}(\rho, \phi) = \dfrac{T_e}{z_F^2}\langle D \rho_z, \phi_z \rangle -  \dfrac{T_e}{z_F}\mathcal{F} \langle f \rho , \phi_z \rangle  - \dfrac{T_e}{z_F}\mathcal{M} \langle D \rho, \phi_z \rangle+ T_e\mathcal{G} \langle \rho, \phi \rangle +\dfrac{T_e}{z_F} \mathcal{F} f(1) \rho(1,t) \phi(1)\quad
\end{equation}
then \eqref{eq:rescvar} becomes 
\begin{equation}\label{FirnV}
\ltwof{ \rho_t , \phi} + \mathcal{A}(\rho,\phi) =0
\end{equation}
\subsection{Euler-Implicit Time Discretization}
By using Euler-Implicit time discretization on \eqref{FirnV},  one  obtains the following 
scheme.
 \begin{equation}\label{Firn-tau23}
\left\{\begin{array}{ll}
\ltwof{\rho(\cdot, t+\Delta t)-\rho(\cdot,t),\phi} =-{\Delta t}\;  \mathcal{A}(\rho(\cdot,t+\Delta t), \phi)\, ,&\\
 \rho(0,t) = \rho^ {\text{atm}}(t) \,, &\\
 \rho(z,0) = 0\, . &\\
\end{array}\right.
\end{equation}
 \subsection{Finite Element Space Discretization}\label{sec:disc}
 Let 
$\mathcal{N}=\{z_i  \;|\;i=1,2,...,n\}$ be the set of nodes based on the partition of $(0,1)$ with $$0=z_1<z_2<...<z_n= 1,$$  
and $\mathcal{E} = \{ E_j = [z_j,z_{j+1}] \;|\; j=1,2,..,n-1\}$ the resulting set of elements.\\
The $\mathbb{P}_1$ finite element subspace $X_n$ of $H^1(0,1)$ is given by:
 $$X_n=\{v\in C(0,1)\,|\,v\mbox{ restricted to }E_j\in\mathbb{P}_1,\,j=1,2..,n-1\} \subset H^1_{D,f}(0,1) \, . $$ 
 Consistently, we define $$X_{n,d} = X_n \cap H^1_{D,f}(0,1)\, .$$
 For that purpose, we let ${B}_n=\{\varphi_i|\,i=1,2,...n\}$ be a finite element basis of functions with compact support in $(0,1)$, i.e.,: 
 \begin{equation}\label{FEdef}\forall v_n \in X_n:\,v_n(z)=\sum\limits_{i=1}^n{V_{i}\varphi_i(z)},\;\;\;\; V_i=v_n(z_i),\end{equation} where 
 $\varphi_1(z) = \begin{cases} \dfrac{z_2 - z}{z_2 - z_1}, & z_1\leq z \leq z_2\\
 0,& \mbox{otherwise} \end{cases}$, \qquad $\varphi_n(z) = \begin{cases} \dfrac{z - z_{n-1}}{z_n - z_{n-1}},& z_{n-1}\leq z \leq z_n\\
 0,& \mbox{otherwise} \end{cases}$, \\and  $\varphi_i(z) = 
 \begin{cases} 
 \dfrac{z - z_{i-1}}{z_i - z_{i-1}},& z_{i-1}\leq z \leq z_i\\
 \dfrac{z_{i+1} - z}{z_{i+1} - z_i}, & z_i\leq z \leq z_{i+1}\\
 0,& \mbox{otherwise} \end{cases} $ for $i = 2,.., n-1$\, .\\\\
We hence obtain the following fully implicit {Computational Model}. \vspace{2mm}\\
Given $\rho_n(t) \in X_{n,d} +\{\rho^ {\text{atm}}\}$, one seeks: 
 \begin{equation}\label{Firn-tau2}
\left\{\begin{array}{ll}
\rho_n(t+\Delta t)\in  X_{n,d} +\{\rho^ {\text{atm}}(t)\}
&\\
\ltwof{\rho_n(t+\Delta t),\phi} +{\Delta t}  \mathcal{A}(\rho_n(t+\Delta t), \phi)= \ltwof{\rho_n(t),\phi} ,& \quad\forall \phi \in X_{n,d}, \;\; \forall t>0\\
 \rho_n(0) = 0&\\
\end{array}\right.\vspace{-2mm}
\end{equation} 
where \vspace{-8mm}
\begin{eqnarray}
\rho_n(t) &=& \rho_{n,d}(t) +  \rho^ {\text{atm}}(t) \varphi_1(z),\hspace{7cm}\nonumber\\
\rho_{n,d}(t) &=& \sum\limits_{i=2}^n{\rho(z_{i},t)\varphi_i(z)} \, .\nonumber
\end{eqnarray}
Thus, \eqref{Firn-tau2} is equivalent to \eqref{Firn-tau33}.\\ 
Given $\rho_{n,d}(t) \in X_{n,d}$, one seeks $\forall t\in(0,1]$, $\forall \phi \in X_{n,d}$: 
 \begin{equation}\label{Firn-tau33}
\hspace{-3mm}\left\{\begin{array}{lcl}
\rho_{n,d}(t+\Delta t)\in  X_{n,d} 
&&\\
\ltwof{\rho_{n,d}(t+\Delta t),\phi} +{\Delta t}  \mathcal{A}(\rho_{n,d}(t+\Delta t),\phi)=\ltwof{\rho_{n,d}(t),\phi}-{\Delta t} \mathcal{A}(\rho^ {\text{atm}}(t+\Delta t)\varphi_1, \phi)&&\\
\qquad \hspace{7cm}-\ltwof{(\rho^ {\text{atm}}(t+\Delta t)-\rho^ {\text{atm}}(t)) \varphi_1,\phi}&&\\
 \rho_{n,d}(0) = 0.&&
\end{array}\right.
\end{equation}
where the bilinear form $\mathcal{A}(.,.)$ is defined in \eqref{eq:rescbil}
\subsection{Matrix Form of the Discrete System}\label{sec:mat}
Let $\phi = \varphi_j$ for $j=2,..,n$ in \eqref{Firn-tau33} and define the vector $\Lambda(t) = [\rho(z_{2},t), \, \rho(z_{3},t), \, \cdots ,\rho(z_{n},t)]^T$ of length $n-1$, then \eqref{Firn-tau33} can be written in Matrix form  
\begin{equation}\label{eq:mat2}
\left\{\begin{array}{lcl}
\left[M_f+T_e\Delta t\; C\right]\Lambda(t+\Delta t) &=&  M_f\Lambda(t) - T_e\Delta t \;b\\
\Lambda(0)= 0
\end{array}\right.\vspace{-1mm}
\end{equation}
where \vspace{-1mm}
\begin{itemize}
 \item  $ C = {\mathcal{G}} M+\dfrac{1}{z_F^2}S(D)-\dfrac{\mathcal{M}}{z_F}A(D)+\dfrac{1}{z_F}(B-K_f) \vspace{-2mm}$
\item $   T_e\,\Delta t \,b =  v_1(t)+T_e\,\Delta t\, v_3(t); \quad i.e. \quad b = \dfrac{1}{T_e\,\Delta t} v_1(t)+ v_3(t)$ are vectors of length $n-1$ with $v_1(t)$ and $v_3(t)$ defined in \eqref{eq:v1} and \eqref{eq:v3} respectively.
\item $M_f, M, S(D), A(D), K_f $ are $(n-1)\times(n-1)$ matrices whose entries are respectively for $i,j = 1,2,\cdots, n-1$:\vspace{1mm}\\ $
(M_f)_{i,j} = \ltwof{ \varphi_{i+1},\varphi_{j+1}},\;\; M_{i,j} = \ltwoinner{\varphi_{i+1} ,\varphi_{j+1}}, \;\;S_{i,j} = \ltwoinner{D \varphi_{i+1}',\varphi_{j+1}'},$\vspace{1mm}\\ $K_{i,j} =\mathcal{F} \ltwof{ \varphi_{i+1}' ,\varphi_{j+1}},\;\; A_{i,j} = \ltwoinner{D \varphi_{i+1}', \varphi_{j+1}}.$ 
\item $B$ is an $(n-1)\times(n-1)$ zero matrix with $B(n-1,n-1) = \mathcal{F}f_n$ 
\end{itemize}
by noting that for $j=2,..,n$:
\begin{itemize}
\item $\ltwof{\rho_{n,d}(t+\Delta t),\varphi_j} = \sum\limits_{i=2}^n \rho(z_{i},t+\Delta t)\ltwof{ \varphi_i,\varphi_j}$ is equivalent to $M_f \Lambda(t+\Delta t)$. 
\item $\ltwof{\rho_{n,d}(t+\Delta t),\varphi_j} = \sum\limits_{i=2}^n \rho(z_{i},t)\ltwof{\varphi_i,\varphi_j}$ is similarly equivalent to $M_f \Lambda(t)$.
\item $(\rho^ {\text{atm}}(t+\Delta t)-\rho^ {\text{atm}}(t))\ltwof{\varphi_1,\varphi_j}$ is equivalent to the  vector of length $n-1$, \begin{equation}\label{eq:v1} v_1(t) = (\rho^ {\text{atm}}(t+\Delta t)-\rho^ {\text{atm}}(t))\ltwof{\varphi_1,\varphi_2} e_1 
\end{equation} where $e_1 =[1,\, 0\, \cdots , \, 0]^T$.
\item {\color{white}.} \vspace{-10.5mm} \hspace{-5mm} \begin{eqnarray}
\hspace{-4mm}  \mathcal{A}(\rho_{n,d}(t+\Delta t),\varphi_j) &=&\mathcal{A}\left(\sum\limits_{i=2}^n{\rho(z_{i},t+\Delta t)\varphi_i} , \varphi_j\right) \nonumber\\
&=& {T_e\mathcal{G}}\sum\limits_{i=2}^n\rho(z_{i},t+\Delta t)\ltwoinner{{\varphi_i} ,\varphi_j}  - \dfrac{T_e}{z_F}\sum\limits_{i=2}^n\mathcal{F} \rho(z_{i},t+\Delta t) \ltwof{{\varphi_i} ,\varphi_j'} 
 \nonumber\\
&&+ 
\dfrac{T_e}{z_F^2} \sum\limits_{i=2}^n\rho(z_{i},t+\Delta t)\ltwoinner{D \varphi_i' ,\varphi_j'}
+ \,  \dfrac{T_e\mathcal{F} }{z_F}f_n\,\varphi_j(z_n)\,\rho(z_n,t+\Delta t) \nonumber\\ 
&& - \dfrac{T_e\mathcal{M}}{z_F}\sum\limits_{i=2}^n\rho(z_{i},t+\Delta t)\ltwoinner{D \varphi_i ,\varphi_j'}  
,  \nonumber\end{eqnarray}
is equivalent to
 \begin{eqnarray}&&T_e\left({\mathcal{G}}M+\frac{1}{z_F^2}S(D)-\dfrac{1}{z_F}K_f-\frac{\mathcal{M}}{z_F}A(D)\right)\Lambda(t+\Delta t) + \dfrac{T_e}{z_F}v_2(t+\Delta t)\qquad\nonumber\\ 
 &\iff& T_e\left(\mathcal{G}M+\frac{1}{z_F^2}S(D)-\dfrac{1}{z_F}K_f-\frac{\mathcal{M}}{z_F}A(D)+\dfrac{1}{z_F}B\right)\Lambda(t+\Delta t)
\label{bilinA}\end{eqnarray} 
 where $v_2(t+\Delta t) = \mathcal{F} f_n \rho({z_n},t+\Delta t) e_{n-1} = B\Lambda(t+\Delta t)$ is an $(n-1) \times 1$ vector of zeros except the last entry.
\item {\color{white}.}\vspace{-0.95cm} 
\begin{eqnarray}
\mathcal{A}\left(\rho^ {\text{atm}}(t+\Delta t)\varphi_1 , \varphi_j\right) &=& T_e\rho^ {\text{atm}}(t+\Delta t)\left[\mathcal{G}\ltwoinner{\varphi_1 ,\varphi_j}  + \dfrac{1}{z_F^2} \ltwoinner{D \varphi_1' ,\varphi_j'}- \dfrac{\mathcal{F}}{z_F} \ltwof{\varphi_1 ,\varphi_j'}\right. \nonumber\\
&&\left. - \frac{\mathcal{M}}{z_F}\ltwoinner{D \varphi_1 ,\varphi_j'} \right] \nonumber
\end{eqnarray}
is equivalent to the $(n-1) \times 1$  vector $T_ev_3(t)$ where \begin{eqnarray}
 v_3(t) &=& \rho^ {\text{atm}}(t+\Delta t)\left[\mathcal{G} \ltwoinner{\varphi_1 ,\varphi_2}  + \dfrac{1}{z_F^2} \ltwoinner{D \varphi_1' ,\varphi_2'}-\dfrac{\mathcal{F}}{z_F} \ltwof{ \varphi_1 ,\varphi_2'}- \dfrac{\mathcal{M}}{z_F}\ltwoinner{D \varphi_1 ,\varphi_2'} \right]e_1 \nonumber\\
& =&\rho^ {\text{atm}}(t+\Delta t)\, c_1 \, e_1 \, .\label{eq:v3}
 \end{eqnarray}
\end{itemize}
Standard inner products of the form $\ltwoinner{ \varphi_i, \varphi_j}$ and $\ltwoinner{ \varphi'_i, \varphi_j } $ are evaluated accurately in Appendix \ref{sec:sip}. As for the weighted inner products of the form
   $ \ltwoinner{ D \varphi'_i, \varphi_j }$, 
    $\ltwoinner{ D \varphi'_i, \varphi'_j }$, 
    $\ltwof{\varphi_i, \varphi_j}$ , 
    $\ltwof{ \varphi'_i, \varphi_j }$, 
they are approximated in Appendix \ref{sec:wip}. 
\noindent Consequently, all the vectors and matrices (except for $M$) are approximated as detailed in Appendices \ref{sec:matvec} and \ref{sec:matvec2} where for a uniform mesh with $h = h_{i+1} = h_{j+1} = z_{i+1}-z_{i}=z_{j+1}-z_{j}$ for all $i,j$ and $h_2 = z_2-z_1 = z_2=h$: 
\begin{itemize}
    \item $v_1(t) = (\rho^ {\text{atm}}(t+\Delta t)-\rho^ {\text{atm}}(t))\ltwof{\varphi_1,\varphi_2} e_1 
\approx (\rho^ {\text{atm}}(t+\Delta t)-\rho^ {\text{atm}}(t))\; \dfrac{f_1+f_2}{12} \;h_2\; e_1$
\item $
 v_3(t)   \approx  \rho^ {\text{atm}}(t+\Delta t)\left[\mathcal{G} \dfrac{h_2}{6}  - \dfrac{D_1+D_2}{2\,h_2\,z_F^2} -\mathcal{F}\,\dfrac{f_1+f_2}{4z_F}- \mathcal{M}\,\dfrac{D_1+D_2}{4\,{z_F}} \right]e_1 $, \qquad 
\item $
     A(D)\approx { \dfrac{1}{4}\begin{pmatrix}
            \eta_1 & \zeta_2 & 0 & \hdots & 0\vspace{3mm}\\
             -\zeta_2 & \eta_2 & \zeta_3 & & \vdots\vspace{3mm}\\
             0 & \ddots & \ddots & \ddots & 0\vspace{3mm}\\
              \vdots& & -\zeta_{n-2} & \eta_{n-2} & \zeta_{n-1}\vspace{3mm}\\
             0 & \hdots& 0 & -\zeta_{n-1} & \zeta_{n-1} \\
         \end{pmatrix}},
      $ with $\zeta_i = D_i+D_{i+1}$, and $\eta_i =  D_i-D_{i+2}$.
\item      $ S(D)\approx { \dfrac{1}{2h}\begin{pmatrix}
             {\zeta_1+\zeta_2}& -\zeta_2 & 0 & \hdots & 0\vspace{3mm}\\
             {-\zeta_2}& {\zeta_2+\zeta_3} & {-\zeta_3}& & \vdots\vspace{3mm}\\
             0 & \ddots & \ddots & \ddots & 0\vspace{3mm}\\
             \vdots & & {-\zeta_{n-2}} & \zeta_{n-2}+\zeta_{n-1} & {-\zeta_{n-1}}\vspace{3mm}\\
             0&\cdots&0&{-\zeta_{n-1}}&\zeta_{n-1}
         \end{pmatrix}},
$
with $\zeta_i = D_i+D_{i+1}$.
\item $
     K_f\approx { \dfrac{\mathcal{F}}{4}\begin{pmatrix}
            \eta_1 & \zeta_2 & 0 & \hdots & 0\vspace{3mm}\\
             -\zeta_2 & \eta_2 & \zeta_3 & & \vdots\vspace{3mm}\\
             0 & \ddots & \ddots & \ddots & 0\vspace{3mm}\\
              \vdots& & -\zeta_{n-2} & \eta_{n-2} & \zeta_{n-1}\vspace{3mm}\\
             0 & \hdots& 0 & -\zeta_{n-1} & \zeta_{n-1} \\
         \end{pmatrix}},
      $ with $\zeta_i = f_i+f_{i+1}$, and $\eta_i =  f_i-f_{i+2}$.
      \item      $ M_f\approx { \dfrac{h}{12}\begin{pmatrix}
             2{\zeta_1+2\zeta_2}& \zeta_2 & 0 & \hdots & 0\vspace{3mm}\\
             {\zeta_2}& {2\zeta_2+2\zeta_3} & {\zeta_3}& & \vdots\vspace{3mm}\\
             0 & \ddots & \ddots & \ddots & 0\vspace{3mm}\\
             \vdots & & {\zeta_{n-2}} & 2\zeta_{n-2}+2\zeta_{n-1} & {\zeta_{n-1}}\vspace{3mm}\\
             0&\cdots&0&{\zeta_{n-1}}&2\zeta_{n-1}
         \end{pmatrix}},
$
with $\zeta_i = f_i+f_{i+1}$.
 \item      $ M= { \dfrac{h}{6}\begin{pmatrix}
             4& 1 & 0 & \hdots & 0\\
             1& 4 & 1& & \vdots\\
             0 & \ddots & \ddots & \ddots & 0\\
             \vdots & & 1 &4 & 1\\
             0&\cdots&0&1&2
         \end{pmatrix}}.
$
      \end{itemize}

\subsection{Properties of Matrices}
Consider the discrete system
\begin{equation}\label{eq:Mat}
\big( M_f + T_e\Delta t C \big)\Lambda(t_{i+1})
=
M_f \Lambda(t_i) - T_e \Delta t\, b,
\qquad \Lambda(0)=0,
\end{equation}
where
 $ C = {\mathcal{G}} M+\dfrac{1}{z_F^2}S(D)-\dfrac{\mathcal{M}}{z_F}A(D)+\dfrac{1}{z_F}(B-K_f).$

\noindent We will first prove that most of the matrices are positive definite, specifically the $M_f$ matrix.
Then,  to prove the existence of a unique solution of system \eqref{eq:Mat}, we will  prove the invertibility of the matrix
\begin{eqnarray*}
V= M_f + T_e \Delta t C = M_f(I + T_e\Delta t M_f^{-1}C)
\end{eqnarray*}
Note that if $T_e\Delta t \,\rho (M_f^{-1}C) < 1$, then $V$ is invertible by Neumann series condition. A computable but more restrictive constraint is $T_e\Delta t  <  \dfrac{1}{\|M_f^{-1}\| \;\|C\|} \leq \dfrac{\|M_f\|}{\|C\|}$, since $\rho (M_f^{-1}C) \leq \|M_f^{-1}C \|\leq \|M_f^{-1}\|\;\|C \|$. 
However, we prove invertibility of $V$ by imposing a different constraints  on $\Delta t$.\\

\noindent Let
$y = (y_1,\dots,y_{n-1})^T \in \mathbb{R}^{n-1}, \quad y \neq 0,\quad
$
and define the nonzero finite element function
$$
v(z) := \sum_{i=1}^{n-1} y_i \varphi_{i+1}(z),
\qquad with \qquad 
(v(z))^2 = \left(\sum_{i=1}^{n-1} y_i \varphi_{i+1}(z)\right)^2
=
\sum_{i=1}^{n-1}\sum_{j=1}^{n-1}
y_i y_j \varphi_{i+1}(z)\varphi_{j+1}(z),
$$
where $\{\varphi_i\}$ are the piecewise linear basis functions, and $v(0)=0$.
\begin{lemma}\label{lemm:M}
The triadiagonal mass matrix $M$ is a symmetric positive definite
matrix.
\end{lemma}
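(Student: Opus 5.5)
The plan is to prove the result in two ways: first structurally through the Gram-matrix characterization, then directly from the explicit tridiagonal form as a cross-check.

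\textbf{Symmetry.} By definition $M_{i,j}=\ltwoinner{\varphi_{i+1},\varphi_{j+1}}=\int_0^1\varphi_{i+1}\varphi_{j+1}\,dz$, which is symmetric in $i,j$. Hence $M=M^T$.

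\textbf{Tridiagonal structure.} The support of $\varphi_{i+1}$ is $[z_i,z_{i+2}]$. So $\varphi_{i+1}\varphi_{j+1}\equiv 0$ a.e.\ whenever $|i-j|\ge 2$, and $M$ is tridiagonal.

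\textbf{Positive definiteness.} Take the nonzero vector $y$ and the finite element function $v=\sum_{i=1}^{n-1}y_i\varphi_{i+1}$ introduced just before the lemma. Expanding $(v(z))^2$ as the double sum already written and integrating term by term gives
\[
y^TMy=\sum_{i=1}^{n-1}\sum_{j=1}^{n-1}y_iy_j\ltwoinner{\varphi_{i+1},\varphi_{j+1}}=\int_0^1 v(z)^2\,dz=\ltwonorm{v}^2\ \ge 0 .
\]

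\textbf{Strictness.} It remains to show $\ltwonorm{v}>0$ when $y\neq 0$. This is the only step needing care, and it is mild. The functions $\varphi_2,\dots,\varphi_n$ are linearly independent because of the nodal property $\varphi_{i}(z_k)=\delta_{ik}$. Hence $v(z_{k+1})=y_k$ for each $k$. If $y\ne0$, some nodal value $y_k$ is nonzero. Since $v$ is continuous, it is nonzero on a neighborhood of that node, which has positive measure. Therefore $\int_0^1 v^2\,dz>0$, and so $y^TMy>0$.

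\textbf{Cross-check via the explicit matrix.} Alternatively, one can use the exact form $M=\frac h6\,\mathrm{tridiag}(1,4,1)$ with last diagonal entry $2$.
\begin{itemize}
\item Rows $1,\dots,n-2$ satisfy $4>1+1$, so they are strictly diagonally dominant.
\item The last row satisfies $2>1$.
\end{itemize}
So $M$ is symmetric, strictly diagonally dominant, and has positive diagonal. By Gershgorin's theorem all its eigenvalues lie in $[\frac h6,\,h]$, which again gives positive definiteness.

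The Gram-matrix argument is the primary proof, since it does not depend on the mesh being uniform.
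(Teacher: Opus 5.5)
Your proof is correct and follows the same route as the paper: symmetry comes from the symmetry of the $L^2$ inner product, and positive definiteness from the Gram identity $y^TMy=\int_0^1 v(z)^2\,dz$, where your nodal-value/continuity argument justifies the step the paper only asserts (that $y\neq 0$ forces $v\not\equiv 0$). Your Gershgorin check for the uniform mesh, which places the eigenvalues in $[h/6,h]$, is also valid and is an addition the paper does not include.
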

\begin{proof}
$M$ is given by
$\displaystyle
M_{ij} = {\int_0^{1}}  \varphi_{i+1}(z)\varphi_{j+1}(z)\,dz = \big\langle \varphi_{i+1}(z), \,\varphi_{j+1}(z)   \big\rangle$.\vspace{2mm}
\noindent The symmetry of $M$ follows from the symmetry of the $L_2$ inner product; $M_{i,j}= M_{j,i}$. 
\noindent Then, for $y\neq 0$, i.e. $v(z) \neq 0$, 
$\displaystyle
y^T M y
= \sum_{i=1}^{n-1}\sum_{j=1}^{n-1}
y_i y_j M_{ij} = {\int_0^{1}} \sum_{i=1}^{n-1}\sum_{j=1}^{n-1}
y_i y_j  \varphi_{i+1}(z)\varphi_{j+1}(z)\,dz = 
{\int_0^{1}} v(z)^2\,dz > 0.
$
\end{proof}
\begin{lemma}\label{lemm:Mf}Given that $f(z) \geq 0$, then the tridiagonal weighted mass matrix $M_f$ is symmetric positive definite. Moreover, 
the approximated matrix $M_f$ is positive definite. 

\end{lemma}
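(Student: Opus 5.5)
The plan has two parts: first the exact matrix $M_f$ with entries $(M_f)_{i,j}=\ltwof{\varphi_{i+1},\varphi_{j+1}}$, then the trapezoidal-type approximation displayed above.

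\textbf{Exact matrix.} I would mimic the proof of Lemma \ref{lemm:M}.

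Symmetry is immediate from the symmetry of $\ltwof{\cdot,\cdot}$.

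For $y\neq 0$, let $v=\sum_{i=1}^{n-1} y_i\varphi_{i+1}$ be the associated nonzero P1 function with $v(0)=0$. Expanding the double sum inside the integral gives
\[
y^T M_f y=\int_0^1 f(z)\,v(z)^2\,dz\ \ge 0 .
\]
Strict positivity needs more than $f\ge 0$. By assumption \eqref{A10}, $f>0$ on $[0,1)$ and $f$ is continuous there. Since $v$ is piecewise linear and not identically zero, it vanishes only on a finite set of nodes inside each element. Hence $v^2>0$ on an open subinterval $I\subset(0,1)$, which can be chosen away from $z=1$. On a compact subinterval of $I$ the function $f$ has a positive minimum, so $\int_0^1 f v^2\,dz>0$. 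This step, using positivity of $f$ away from the possibly degenerate endpoint, is where the hypothesis really enters, even though the statement only says $f\ge 0$.

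\textbf{Approximated matrix.} Here I would argue directly on the displayed tridiagonal form, with $\zeta_i=f_i+f_{i+1}$. By \eqref{A10}, $f_1,\dots,f_{n-1}>0$ and $f_n\ge 0$, so $\zeta_i>0$ for all $i=1,\dots,n-1$.

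The approximated $M_f$ is symmetric by inspection.

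For positive definiteness, the plan is to write it as a sum of element contributions. Element $E_k$ carries nodes $k,k+1$ and weight $\zeta_k$, and contributes the local $2\times2$ block $\frac{h\zeta_k}{12}\begin{pmatrix}2&1\\1&2\end{pmatrix}$. The first element $E_1$ touches only the unknown at node 2 (node 1 is the Dirichlet node), so it contributes only the scalar $2\zeta_1$. Summing these blocks reproduces the displayed matrix: diagonal entries $2\zeta_{i}+2\zeta_{i+1}$, last diagonal $2\zeta_{n-1}$, off-diagonals $\zeta_{i+1}$. I would verify this index bookkeeping first.

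Then, for $y\neq0$ with the convention $y_0=0$,
\[
y^TM_fy=\frac{h}{12}\Big(2\zeta_1 y_1^2+\sum_{k=2}^{n-1}\zeta_k\,(2y_{k-1}^2+2y_{k-1}y_k+2y_k^2)\Big),
\]
where the elements $E_2,\dots,E_{n-1}$ carry unknowns $y_{k-1},y_k$.

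Each quadratic form $2a^2+2ab+2b^2=a^2+b^2+(a+b)^2$ is positive definite, so every term is nonnegative. The expression vanishes only if $y_1=0$ and every consecutive pair $(y_{k-1},y_k)$ vanishes, i.e. $y=0$.

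An alternative, cheaper route is the Gershgorin/diagonal-dominance argument. Row $i$ has diagonal $2(\zeta_i+\zeta_{i+1})$ and off-diagonal sum $\zeta_i+\zeta_{i+1}$ (with the adjusted first and last rows). So the matrix is strictly diagonally dominant, symmetric, and has a positive diagonal, hence it is SPD. I would present this as a check.

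The main obstacle is the careful matching of the assembled element blocks with the displayed entries, especially the first row, where $2\zeta_1$ appears because of the boundary element.
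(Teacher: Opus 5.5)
Your proposal is correct and is essentially the paper's proof. For the exact matrix you write $y^TM_fy=\|v\|_f^2$ and use $f>0$ on $[0,1)$ from \eqref{A10} to get strictness. One small wording fix there: $v$ can vanish on whole elements, but some $y_i\neq0$ makes $v\neq0$ near $z_{i+1}$ by continuity, which is all you need. For the approximated matrix, your element assembly reproduces exactly the paper's identity $y^TM_fy=\frac{h}{6}\big[(f_1+f_2)y_1^2+\frac12\sum_{i=1}^{n-2}(f_{i+1}+f_{i+2})\big(y_i^2+y_{i+1}^2+(y_i+y_{i+1})^2\big)\big]$, with all weights $f_k+f_{k+1}>0$. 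Your Gershgorin/diagonal-dominance check is a valid extra confirmation that the paper does not use.
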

\begin{proof} $M_f$ is given by
$\displaystyle
(M_f)_{ij} = \int_0^{z_F} f(z)\, \varphi_{i+1}(z)\varphi_{j+1}(z)\,dz = \ltwof{\,\varphi_{i+1}(z), \,\varphi_{j+1}(z)   }$.\vspace{2mm}

\noindent The symmetry of $M_f$ follows from the symmetry of the $L_f$ inner product; $(M_f)_{i,j}= (M_f)_{j,i}$.

\noindent Then, for $y\neq 0$ we have that $\displaystyle v(z) = \sum_{i=1}^{n-1} y_i \varphi_{i+1}(z) \neq 0$ over some subset of $(0,z_F)$, then\vspace{-2mm}
\begin{align*}
y^T M_f y
&=
\sum_{i=1}^{n-1}\sum_{j=1}^{n-1}
y_i y_j (M_f)_{ij} 
\;=
\sum_{i=1}^{n-1}\sum_{j=1}^{n-1}
y_i y_j
\int_0^{1} f(z)\varphi_{i+1}(z)\varphi_{j+1}(z)\,dz \\
&=
\int_0^{1} f(z)
\sum_{i=1}^{n-1}\sum_{j=1}^{n-1}
y_i y_j \varphi_{i+1}(z)\varphi_{j+1}(z)\,dz = \int_0^{1} f(z)\, v(z)^2\,dz
 \,=\,  \fnorm {v(z)}^2 > 0
\end{align*}
Note that $\fnorm{ v(z)} = 0$ if and only if $ v(z) = 0$ over $(0,z_F)$, which is not the case. \\

\noindent As for the approximated matrix $M_f$, it is also positive definite. We prove it for a uniform mesh:
\begin{eqnarray}
    y^T M_f y &=&\dfrac{h}{6}(f_{n-1}+f_n)y_{n-1}^2+ \dfrac{h}{6}\sum_{i=1}^{n-2}(f_i+2f_{i+1}+f_{i+2})
y_i^2 + \dfrac{h}{12}\sum_{i=1}^{n-2} 2y_iy_{i+1}(f_{i+1}+f_{i+2})\vspace{-1mm}\nonumber\\
&=&\dfrac{h}{6}\left[ \sum_{j=0}^{n-2}(f_{j+1}+f_{j+2})y_{j+1}^2 +  \sum_{i=1}^{n-2} y_iy_{i+1}(f_{i+1}+f_{i+2}) +  \sum_{i=1}^{n-2}(f_{i+1}+f_{i+2})
y_i^2\right]\vspace{-1mm}\nonumber\\
&=&\dfrac{h}{6}\left[ (f_{1}+f_{2})y_{1}^2 + \sum_{i=1}^{n-2}(f_{i+1}+f_{i+2})(y_{i+1}^2+y_iy_{i+1}+y_i^2)\right]\vspace{-1mm}\nonumber
 \end{eqnarray}
 \begin{eqnarray}
&=&\dfrac{h}{6}\left[ (f_{1}+f_{2})y_{1}^2 + \dfrac{1}{2}\sum_{i=1}^{n-2}(f_{i+1}+f_{i+2})\left({y_{i+1}^2+y_{i}^2} + {(y_{i+1}+y_{i})^2}\right)\right]\,>\,0\nonumber
\end{eqnarray}
{since $f_i = f(z_i)$ and $f$ is positive inside $(0,1)$ we have 
$f_i+f_{i+1}>0$ } 
and at least one $y_i\neq 0$. 
\end{proof}
\begin{lemma}\label{lemma:2.8}
Assuming $f(z)\geq 0$ is a strictly decreasing positive function, then the
tridiagonal matrix $K_f$ is positive definite and matrix $B$ is positive semi-definite.   
\end{lemma}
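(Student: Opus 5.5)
The plan is to compute the quadratic form $y^TK_fy$ directly, using the finite element function $v(z)=\sum_{i=1}^{n-1}y_i\varphi_{i+1}(z)$ introduced before Lemma \ref{lemm:M}. The claim about $B$ is immediate, so I dispose of it first. $B$ is the zero matrix except for the entry $B(n-1,n-1)=\mathcal{F}f_n$. Since $\mathcal{F}>0$ and $f_n=f(1)\geq 0$, we get $y^TBy=\mathcal{F}f_n\,y_{n-1}^2\geq 0$, so $B$ is positive semi-definite. It is only semi-definite, because $y^TBy=0$ whenever $y_{n-1}=0$, and also whenever $f(1)=0$.

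For the exact $K_f$, bilinearity as in Lemma \ref{lemm:Mf} gives
$$
y^TK_fy=\mathcal{F}\int_0^1 f(z)\,v'(z)\,v(z)\,dz=\frac{\mathcal{F}}{2}\int_0^1 f(z)\,\bigl(v^2\bigr)'(z)\,dz .
$$
I then integrate by parts, using $v(0)=0$:
$$
y^TK_fy=\frac{\mathcal{F}}{2}\Bigl(f(1)\,v(1)^2+\int_0^1 v^2\,d(-f)\Bigr).
$$
The last integral is a Riemann--Stieltjes integral against the nondecreasing function $-f$. This avoids assuming $f$ is differentiable; if $f$ is Lipschitz it is just $-\int_0^1 f'v^2\,dz$. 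The boundary term is nonnegative. For the integral term, $y\neq0$ means $v$ is continuous, piecewise linear and not identically zero, so $v^2>0$ on some open interval $(a,b)$. Because $f$ is strictly decreasing, the Stieltjes measure $d(-f)$ gives positive mass $f(a)-f(b)>0$ to that interval. Hence the integral is strictly positive and $y^TK_fy>0$.

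\textbf{Main obstacle.} The delicate point is exactly this strictness. Mere monotonicity only gives semi-definiteness, and a pointwise argument on $f'$ would fail wherever $f'=0$. So I plan to rely on the measure or Stieltjes formulation, rather than on $f'<0$, to get strict positivity.

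For the approximated tridiagonal $K_f$ of Section \ref{sec:mat} on a uniform mesh, I expand the quadratic form term by term. The off-diagonal entries are $\pm\zeta_{i}$ and appear antisymmetrically, so their contributions $\zeta_{i+1}y_iy_{i+1}-\zeta_{i+1}y_{i+1}y_i$ cancel. What remains is
$$
y^TK_fy=\frac{\mathcal{F}}{4}\Bigl(\sum_{i=1}^{n-2}(f_i-f_{i+2})\,y_i^2+(f_{n-1}+f_n)\,y_{n-1}^2\Bigr).
$$
Strict decrease of $f$ gives $f_i-f_{i+2}>0$. Positivity of $f$ on $[0,1)$ gives $f_{n-1}+f_n>0$. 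Every coefficient is therefore positive, and so $y^TK_fy>0$ for $y\neq0$.
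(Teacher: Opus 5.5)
Your proposal is correct and follows the same route as the paper: integration by parts using $v(0)=0$ for the exact $K_f$, cancellation of the antisymmetric off-diagonal contributions for the approximated tridiagonal $K_f$, and the direct computation $y^TBy=\mathcal{F}f_n y_{n-1}^2\geq 0$ for $B$. The only difference is in the strictness step: the paper writes the remainder as $-2\int_0^1 f'(z)v(z)^2\,dz$ and asserts $f'<0$, which strict monotonicity does not guarantee pointwise (nor even a.e.\ for a singular $f$), whereas your Riemann--Stieltjes formulation against $-f$ makes the strict positivity rigorous.
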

\begin{proof}
$K_f$ is given by
$\displaystyle
(K_f)_{ij} = \mathcal{F} \, \langle \varphi_{i+1}', \varphi_{j+1} \rangle_f =   \mathcal{F} \,\int_0^{1} f(z)\, \varphi_{i+1}'(z)\,\varphi_{j+1}(z)\,dz
$ where
\begin{align*}
\frac{4}{\mathcal{F}}y^T K_f y 
&= \frac{4}{\mathcal{F}}\sum_{i=1}^{n-1}\sum_{j=1}^{n-1} y_i\,y_j\, (K_f)_{ij} \,=\, 4 \int_0^{1} f(z)\, v'(z)\,v(z)\,dz = 2 \int_0^{1} f(z)\,(v(z)^2)'\,dz\\
&= 2 f(z_f) \, v(z_f)^2 - 2\int_0^{1} f'(z)\,v(z)^2\,dz \,\geq\, - 2\int_0^{1} f'(z)\,v(z)^2\,dz  \,>\, 0 
\end{align*}
since $v(0) = 0$, $f(z) \geq 0$ and $f'(z) < 0$. Moreover, the approximated matrix $K_f$ is also positive definite
\begin{align*}
\frac{4}{\mathcal{F}}y^T K_f y &= \dfrac{4}{\mathcal{F}}\sum_{i=1}^{n-1} y_i^2 (K_f)_{ii} + \dfrac{4}{\mathcal{F}}\sum_{i=1}^{n-2} y_i y_{i+1} \big( (K_f)_{i,i+1} + (K_f)_{i+1,i} \big) = \dfrac{4}{\mathcal{F}}\sum_{i=1}^{n-1} y_i^2 (K_f)_{ii} \\
&=   (f_{n-1} + f_n) y_{n-1}^2 + \sum_{i=1}^{n-2} (f_i - f_{i+2}) y_i^2 \,>\, 0
\end{align*}
\noindent since 
$
(K_f)_{i,i+1} + (K_f)_{i+1,i} 
= -\dfrac{\mathcal{F}}{4} (f_{i+1} + f_{i+2}) + \dfrac{\mathcal{F}}{4} (f_{i+1} + f_{i+2}) = 0
$, $f_n + f_{n-1}>0$, and $f(z)$ is strictly decreasing, i.e. $f_i - f_{i+2}>0$.  
\noindent Also,
$
\dfrac{1}{\mathcal{F}}y^T B y=f_n y_{n-1}^2\,\geq \, 0$ since $f(z)\geq 0$  and $y_{n-1}^2 \geq 0$.
\end{proof}

\begin{lemma}\label{lemm:SD}
Given that $D(z) \geq 0$, then the tridiagonal diffusion matrix $S(D)$ is symmetric. Assuming that $D(z) > 0$ is strictly positive over $[0,1)$ \eqref{A20}, then {$S(D)$ and the approximated matrix $S(D)$ are positive definite}.
\end{lemma}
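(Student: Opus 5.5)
We follow the template of Lemmas \ref{lemm:M} and \ref{lemm:Mf}: treat the exact matrix through its quadratic form and the finite element function $v$, then treat the approximated tridiagonal matrix by explicit summation.

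\textbf{Exact matrix.} Symmetry is immediate, since
$$S_{ij}=\ltwoinner{D\varphi_{i+1}',\varphi_{j+1}'}=\int_0^1 D\,\varphi_{i+1}'\varphi_{j+1}'\,dz$$
is symmetric in $i,j$. For positive definiteness, take $y\neq 0$ and the associated $v=\sum_i y_i\varphi_{i+1}$ with $v(0)=0$. Expanding the double sum as in Lemma \ref{lemm:Mf} gives
$$y^TS(D)y=\int_0^1 D(z)\,(v'(z))^2\,dz=\Dnorm{v'}^2\ge 0 .$$
To show this is strictly positive, suppose it vanished. Since $D>0$ on $[0,1)$ and $D$ is continuous, $v'=0$ a.e. on $[0,1)$. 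As $v$ is piecewise linear and continuous, $v$ is then constant on $[0,1]$, and $v(0)=0$ forces $v\equiv 0$. This contradicts $y\neq0$, because the $\varphi_{i+1}$ are linearly independent with $v(z_{i+1})=y_i$. The only delicate point is the last element $[z_{n-1},z_n]$, where $D(1)$ may vanish. There $D>0$ on $[z_{n-1},1)$, a set of positive measure, so $v'$ must still vanish on that element.

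\textbf{Approximated matrix.} Write $y^TS(D)y$ for the tridiagonal form with entries $\frac{1}{2h}(\zeta_i+\zeta_{i+1})$ on the diagonal (last entry $\frac{1}{2h}\zeta_{n-1}$) and $-\frac{1}{2h}\zeta_{i+1}$ off the diagonal. Symmetry is visible from the display. Regroup the form as a telescoped sum of squared differences:
$$y^TS(D)y=\frac{1}{2h}\Big[\zeta_1 y_1^2+\sum_{i=1}^{n-2}\zeta_{i+1}(y_{i+1}-y_i)^2\Big].$$
This is the discrete analogue of $\int D(v')^2$ with $y_0=v(0)=0$. Verify it by expanding: each $y_i^2$, for $i\le n-2$, collects $\zeta_i+\zeta_{i+1}$, the last $y_{n-1}^2$ collects $\zeta_{n-1}$, and each cross term $y_iy_{i+1}$ collects $-2\zeta_{i+1}$, matching the two off-diagonal entries.

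Each $\zeta_k=D_k+D_{k+1}\ge D_k>0$ for $k\le n-1$, because $D_k=D(z_k)$ with $z_k<1$. Hence every term is nonnegative. If the sum vanishes, then $y_1=0$ and $y_{i+1}=y_i$ for all $i$, so $y=0$. Therefore the form is strictly positive for $y\neq0$.

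The main obstacle is simply checking the index bookkeeping in the regrouping, especially the last row where $\zeta_{n-1}=D_{n-1}+D_n$ may rely on $D_{n-1}>0$ alone since $D_n=D(1)$ can be zero.
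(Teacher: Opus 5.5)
Your proposal is correct and follows the paper's proof. For the exact matrix you write $y^TS(D)y=\int_0^1 D\,(v')^2\,dz$, and for the approximated matrix your identity $\frac{1}{2h}\big[\zeta_1y_1^2+\sum_{i=1}^{n-2}\zeta_{i+1}(y_{i+1}-y_i)^2\big]$ is the paper's sum-of-squares expression after reindexing. Your strict-positivity argument for the exact matrix ($v'=0$ a.e.\ on $[0,1)$, so $v$ is constant, so $v\equiv 0$ by $v(0)=0$) is cleaner than the paper's, which relies on the inaccurate pointwise claim that $\varphi'_{j+1}(z)\neq 0$ on all of $(0,z_F)$.
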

\begin{proof}
$S(D)$ is given by
$\displaystyle
S_{ij}
=
\int_0^{1} D(z)\varphi_{i+1}'(z)\varphi_{j+1}'(z)\,dz =  \inner{ \varphi'_{i+1}(z), \varphi'_{j+1}(z)  }_D
$.\\
\noindent The symmetry of $S(D)$ follows from the symmetry of the $L_D$ inner product; $S_{i,j}= S_{j,i}$.

\noindent Then, \vspace{-6mm}
\begin{eqnarray}
  y^T S(D) y
&=&
\sum_{i=1}^{n-1}\sum_{j=1}^{n-1}
y_i y_j S_{ij} 
\;=
\sum_{i=1}^{n-1}\sum_{j=1}^{n-1}
y_i y_j
\int_0^{1} D(z)\varphi'_{i+1}(z)\varphi'_{j+1}(z)\,dz \nonumber \\
&=&
\int_0^{1} D(z)
\sum_{i=1}^{n-1}\sum_{j=1}^{n-1}
y_i y_j \varphi'_{i+1}(z)\varphi'_{j+1}(z)\,dz = \int_0^{1} D(z)\, v'(z)^2\,dz\nonumber \\
&=& \Dnorm{v'(z)}^2 \geq 0 \nonumber 
\end{eqnarray}
Note that $\Dnorm{ v'(z)}= 0$ if and only if $ v'(z) = 0$ for all $z\in (0,z_F)$. However, for all $y \neq 0$ we have
that $\displaystyle v'(z) =
\sum_{j=1}^{n-1}
y_j \varphi'_{j+1}(z)
 \neq 0$ since 
 $\varphi'_{j+1}(z) \neq 0$ for all $z\in (0,z_F)$ and their linear combination can never be equal to zero by definition of the $\mathbb{P}_1$ finite element basis functions.\\
 The approximated matrix $S(D)$ is also positive definite as shown in \cite{Moufawad2024Firn} where \\
$\displaystyle y^T S(D) y = \dfrac{1}{2h}\left[ (D_1+D_2)y_1^2 +  \sum_{i=2}^{n-1}(D_i+D_{i+1})(y_i - y_{i-1})^2\right] >0$ since $D_i+D_{i+1} >0$ for a strictly  positive function on $[0,1)$ (with $D_{n-1}+D_n \geq D_{n-1}>0$) and at least one of the terms $y_i-y_{i-1} \neq 0$ for $y\neq 0$.  Note that if $y$ is constant, then the first term is nonzero.  
\end{proof}
\begin{lemma}
Assuming $D(z)$ is a strictly decreasing positive function, then the
tridiagonal matrix $A(D)$ is positive definite.    
\end{lemma}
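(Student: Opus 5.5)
The argument mirrors the proof of Lemma \ref{lemma:2.8} for $K_f$, with $f$ replaced by $D$. The entries are $A_{ij}=\ltwoinner{D\varphi_{i+1}',\varphi_{j+1}}$. For $y\neq 0$, let $v=\sum_{i=1}^{n-1} y_i\varphi_{i+1}$, so that $v(0)=0$. Then
\[
y^TA(D)y=\int_0^1 D(z)\,v'(z)\,v(z)\,dz=\frac12\int_0^1 D(z)\,\big(v(z)^2\big)'\,dz .
\]
Integrating by parts, using $v(0)=0$, and noting that $D$ is Lipschitz by \eqref{A2pp0}, hence absolutely continuous with $D'$ existing a.e., we get
\[
y^TA(D)y=\frac12 D(1)\,v(1)^2-\frac12\int_0^1 D'(z)\,v(z)^2\,dz .
\]
The boundary term is nonnegative because $D(1)\ge 0$. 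For the integral term, $D$ strictly decreasing gives $D'\le 0$ a.e. The point to handle carefully is strict positivity. I would argue it directly: $\int_0^1 D\,(v^2)'\,dz=\int_0^1 v^2\,d(-D)$ in the Stieltjes sense, the measure $-dD$ charges every open interval because $D$ is strictly decreasing, and $v^2>0$ on an open interval since $v\not\equiv 0$ is continuous. So $y^TA(D)y>0$.

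For the approximated matrix (uniform mesh), the computation is the same as the one done for $K_f$. Because $(A)_{i,i+1}=\zeta_{i+1}/4$ and $(A)_{i+1,i}=-\zeta_{i+1}/4$, the off-diagonal contributions cancel in the quadratic form, since only the symmetric part matters. This leaves
\[
4\,y^TA(D)y=\sum_{i=1}^{n-2}(D_i-D_{i+2})\,y_i^2+(D_{n-1}+D_n)\,y_{n-1}^2 .
\]
Each coefficient $D_i-D_{i+2}$ is strictly positive by strict monotonicity, and $D_{n-1}+D_n\ge D_{n-1}>0$ by \eqref{A20}. Hence the form is positive for every $y\ne0$.

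The main obstacle is the positivity step for the exact matrix. Strict monotonicity does not give $D'<0$ pointwise, so the claim $-\int D'v^2>0$ has to rest on the measure argument above rather than on a pointwise sign. The rest is a routine transcription of Lemma \ref{lemma:2.8}.
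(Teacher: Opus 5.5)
Your proposal is correct and follows the paper's route. The paper simply declares the proof identical to that of Lemma~\ref{lemma:2.8}: write $y^TA(D)y=\int_0^1 D\,v'v\,dz$ and integrate by parts using $v(0)=0$. For the approximated matrix, the antisymmetric off-diagonal entries cancel, leaving the diagonal coefficients $D_i-D_{i+2}>0$ and $D_{n-1}+D_n>0$. Your Stieltjes-measure argument for strict positivity is a useful sharpening, because the paper's version tacitly uses $D'<0$ pointwise, which strict monotonicity alone does not guarantee.
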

\begin{proof}
Since $\displaystyle 
A_{i,j} = \ltwoinner{ D\,\varphi_{i+1}',\,\varphi_{j+1} } = \int_0^{1} D(z)\, \varphi_{i+1}'(z)\,\varphi_{j+1}(z)\,dz $   then the proof is identical to Lemma \ref{lemma:2.8}.
\end{proof}
\noindent Now we are in a position to prove under what condition $V$ is positive definite, i.e. $y^tVy >0$ for all vectors $y\neq 0$ of length $n-1$, where for $G = \dfrac{1}{z_F}S(D) - \mathcal{M} A(D)$
    \begin{eqnarray*}
V&=& M_f + T_e\Delta t C = (M_f - \dfrac{T_e}{z_F}\Delta t K_f) + T_e\Delta t \mathcal{G} M+ \dfrac{T_e}{z_F}\Delta t \left(\dfrac{1}{z_F}S(D) - \mathcal{M} A(D) + B \right)\\
&=& (M_f - \dfrac{T_e}{z_F}\Delta t K_f) + T_e\Delta t \mathcal{G} M+ \dfrac{T_e}{z_F}\Delta t \left(G + B \right)
\end{eqnarray*}
then \vspace{-9mm}
\begin{eqnarray}
    y^tVy &=& (y^tM_fy - \dfrac{T_e}{z_F}\Delta t  y^tK_fy) + T_e\Delta t\mathcal{G} y^tMy + \dfrac{T_e}{z_F}\Delta t \left(  y^tGy + y^tBy\right)\nonumber \\
    &\geq &(y^tM_fy - \dfrac{T_e}{z_F}\Delta t  y^tK_fy) + T_e\Delta t\mathcal{G} \Mnorm{y}^2 + \dfrac{T_e}{z_F}\Delta t  y^tGy\label{eq:yvy}
    \end{eqnarray}
since $B$ is positive semi-definite.
We find lower bounds for the first and last terms as a function of the $h$-norm of vector $y$, $\hnorm{y} = \sqrt{h\norm{y}}$, leading then to lower bounds in terms of the $M$-norm,
$ \Mnorm{y} = \sqrt{y^tMy}$ since these norms are equivalent as proven in Lemma 4.1 in \cite{FirnA}.
\begin{remark}\label{rem:fh}
    For a fixed small $h>0$ , let $$ \underline{f}_{h} = \min\limits_{z\in [0,1-h]} f(z),$$ then
    \begin{itemize}
    \item $ \underline{f}_{h} > 0$ since $[0,1-h]\subset [0,1)$ and $f(z)>0$ over $[0,1)$ by \eqref{A10}.
        \item $ \underline{f}_{h} \geq f_{min} >0$,\;\; if $f(z_F)>0$
\end{itemize}    
\end{remark}
\begin{lemma}\label{lemma:m}
Assume that $f(z)\geq 0$ satisfies \eqref{A10} and that $\Delta t  < \dfrac{h}{6\mathcal{F}}\dfrac{z_F}{T_e}$, then  $$y^tM_fy - \dfrac{T_e}{z_F}\Delta t  y^tK_fy \;>\; \dfrac{\underline{f}_{h} }{24} \hnorm{y}^2 \;\geq\; \dfrac{\underline{f}_{h} }{24} \Mnorm{y}^2  $$
for $y\neq 0$ and 
 $ \underline{f}_{h} > 0$.
\end{lemma}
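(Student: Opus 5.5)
Work with the approximated (uniform-mesh) matrices, since the stated time-step restriction $\Delta t < \frac{h}{6\mathcal{F}}\frac{z_F}{T_e}$ scales with $h$ exactly like those entries. Write $\kappa := \frac{T_e}{z_F}\Delta t$, so the hypothesis becomes $\kappa\mathcal{F} < h/6$.

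\textbf{Step 1: the two quadratic forms.} From the proof of Lemma \ref{lemm:Mf},
\[
y^tM_fy=\frac{h}{6}\Big[(f_1+f_2)y_1^2+\frac12\sum_{i=1}^{n-2}(f_{i+1}+f_{i+2})\big(y_i^2+y_{i+1}^2+(y_i+y_{i+1})^2\big)\Big].
\]
From the proof of Lemma \ref{lemma:2.8}, the off-diagonal entries of $K_f$ cancel, so
\[
y^tK_fy=\frac{\mathcal{F}}{4}\Big[(f_{n-1}+f_n)y_{n-1}^2+\sum_{i=1}^{n-2}(f_i-f_{i+2})y_i^2\Big].
\]
This is a purely diagonal form, and it must be dominated by the diagonal part of $y^tM_fy$. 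Since monotonicity of $f$ is not assumed here, I will not use the sign of $f_i-f_{i+2}$. I will only use the crude bound $|f_i-f_{i+2}|\le f_i+f_{i+2}$, and for the last term $f_{n-1}+f_n$ directly.

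\textbf{Step 2: diagonal lower bound for $M_f$.} Drop the nonnegative $(y_i+y_{i+1})^2$ terms and regroup coefficients per index. For $i\ge 2$, the coefficient of $y_i^2$ is at least $\frac{h}{12}(f_i+2f_{i+1}+f_{i+2})$, and for $i=1$ it is at least $\frac{h}{12}(f_1+2f_2+f_3)$. The last index gets $\frac{h}{12}(f_{n-1}+f_n)$.

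\textbf{Step 3: compare term by term.} With $\kappa\mathcal{F}<h/6$, the $K_f$ contribution for index $i$ is at most $\frac{h}{24}(f_i+f_{i+2})y_i^2$. Subtracting, each index $i\le n-2$ keeps at least $\frac{h}{24}(f_i+4f_{i+1}+f_{i+2})y_i^2 \ge \frac{h}{6}f_{i+1}y_i^2$. Since $z_{i+1}\le 1-h$ for $i\le n-2$, Remark \ref{rem:fh} gives $f_{i+1}\ge \underline{f}_h$.

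\textbf{Main obstacle: the last index $n-1$.} Here the $M_f$ diagonal gives only $\frac{h}{12}(f_{n-1}+f_n)$, while $K_f$ removes up to $\frac{h}{24}(f_{n-1}+f_n)$. This leaves $\frac{h}{24}(f_{n-1}+f_n)\ge \frac{h}{24}f_{n-1}$, and $f_{n-1}\ge \underline f_h$ since $z_{n-1}=1-h$. This is exactly where the constant $1/24$ comes from. The strict inequality follows because $y\neq0$ and the bound $\kappa\mathcal{F}<h/6$ is strict.

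\textbf{Conclusion.} Collecting, $y^tM_fy-\kappa\, y^tK_fy > \frac{\underline f_h}{24}\,h\|y\|_2^2=\frac{\underline f_h}{24}\hnorm{y}^2$. Finally, $y^tMy=\frac{h}{6}\big(4\sum y_i^2+2\sum y_iy_{i+1}-2y_{n-1}^2\big)\le h\|y\|_2^2$ by $2|y_iy_{i+1}|\le y_i^2+y_{i+1}^2$. Hence $\Mnorm{y}^2\le\hnorm{y}^2$, consistent with Lemma 4.1 of \cite{FirnA}, which gives the second inequality.

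If the bookkeeping in Step 2 for the first index turns out tighter than expected, I would reinstate part of the $(y_1+y_2)^2$ term. I expect the slack of $\frac{h}{6}f_{i+1}$ versus the needed $\frac{h}{24}\underline f_h$ to make this unnecessary.
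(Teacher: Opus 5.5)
Your proposal is correct and follows essentially the paper's route: expand both approximated quadratic forms (from the proofs of Lemmas \ref{lemm:Mf} and \ref{lemma:2.8}), drop the nonnegative $(y_i+y_{i+1})^2$ terms, and dominate the diagonal $K_f$ contribution index by index using $\frac{T_e}{z_F}\Delta t\,\frac{\mathcal{F}}{4}<\frac{h}{24}$ and $f\ge\underline{f}_h$ on $[0,1-h]$. The differences are cosmetic: the paper discards $+c f_{i+2}\ge 0$ and keeps $(\frac{h}{12}-c)f_i$ on every index, whereas you use $|f_i-f_{i+2}|\le f_i+f_{i+2}$ and keep $f_{i+1}$; in addition, your direct check $\Mnorm{y}^2\le\hnorm{y}^2$ supplies the constant $1$ that the paper takes from the norm equivalence in \cite{FirnA}.
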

\begin{proof} Let $y\neq 0$, and $c=\dfrac{T_e}{z_F}\Delta t \dfrac{\mathcal{F}}{4}$. Then for $\Delta t  < \dfrac{h}{6\mathcal{F}}\dfrac{z_F}{T_e}$, $\implies c < \dfrac{h}{24} $ and  \\
\begin{eqnarray*}
y^tM_fy &-& \dfrac{T_e}{z_F}\Delta t  y^tK_fy\\
 &=& \dfrac{h}{6}\left[ (f_{1}+f_{2})y_{1}^2 + \dfrac{1}{2}\sum_{i=1}^{n-2}(f_{i+1}+f_{i+2})\left({y_{i+1}^2+y_{i}^2} + {(y_{i+1}+y_{i})^2}\right)\right] \\
    &&-c \left[(f_{n-1} + f_n) y_{n-1}^2 + \sum_{i=1}^{n-2} (f_i - f_{i+2}) y_i^2 \right]\\
    &=& \dfrac{2h}{12} (f_{1}+f_{2})y_{1}^2  -c(f_{n-1} + f_n) y_{n-1}^2+\dfrac{h}{12}\sum_{i=1}^{n-2}(f_{i+1}+f_{i+2})\left(y_{i}^2 + (y_{i+1}+y_{i})^2\right)\\
    &&+ \sum_{i=2}^{n-2}y_i^2\left[\dfrac{h}{12}(f_{i}+f_{i+1})-c (f_{i}-f_{i+2})\right]  + \dfrac{h}{12}(f_{n-1} + f_n) y_{n-1}^2-c(f_1-f_3)y_1^2
      \end{eqnarray*}
    \begin{eqnarray*}
  &=& \dfrac{h}{12} (f_{1}+f_{2})y_{1}^2 +\dfrac{h}{12}\sum_{i=1}^{n-2}(f_{i+1}+f_{i+2})\left(y_{i}^2 + (y_{i+1}+y_{i})^2\right)\\
    &&+ \sum_{i=1}^{n-2}y_i^2\left[\left(\dfrac{h}{12} -c\right)f_{i}+\dfrac{h}{12}f_{i+1}+c f_{i+2}\right]  + (\dfrac{h}{12}-c)(f_{n-1} + f_n) y_{n-1}^2\\
&\geq & \sum_{i=1}^{n-1}y_i^2\left(\dfrac{h}{12} -c\right)f_{i}\;>\; \dfrac{ \underline{f}_{h}}{24}  \sum_{i=1}^{n-1} hy_{i}^2 = \dfrac{ \underline{f}_{h}}{24}  \hnorm{y}^2 \;\geq\; \dfrac{ \underline{f}_{h}}{24} \Mnorm{y}^2
\end{eqnarray*} 
  $f(z)$ a strictly  positive function over $[0,1)$, thus $f_1, f_2, \cdots , f_{n-1} > 0$ since $z_1, z_2, \cdots, z_{n-1} \in [0,1-h] $  with $0<  \underline{f}_{h} \leq f(z), \;\;  \forall z \in [0,1-h] \subset [0,1) $ by Remark \ref{rem:fh}.
\end{proof}
\begin{lemma}\label{lemma:g}
Assume that $D(z)\geq 0$ \eqref{A20} is a Lipschitz continuous function \eqref{A2pp0},  then  there exists an $h_0<1$ ($n_0>1$), and there exist a constant $K_G$,  such that for $h<h_0$ ($n>n_0$)    one has
$$v^TGv\ge -|K_G|. \Mnorm{v}^2.$$
where $K_G =   \dfrac{1}{2\,c_D\,I(D)}-\dfrac{\mathcal{M}}{2}L_\delta$, with $0<c_D<2$ independent of $h$, $\displaystyle I(D) = z_F \int_0^1\dfrac{1}{D(z)}dz < \infty$, and $L_\delta$ the Lipschitz continuity constant such that $
    |D(z)-D(y)|\le L_\delta |z-y|,\quad\forall z,\,y\in[0,1-\delta].$  
\end{lemma}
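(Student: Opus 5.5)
We bound the two pieces of $G = \frac{1}{z_F}S(D) - \mathcal{M}A(D)$ separately, working with the approximated matrices of Section \ref{sec:mat} on a uniform mesh.

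\textbf{Step 1: the diffusion part.} From the proof of Lemma \ref{lemm:SD} we have
$$y^TS(D)y=\frac{1}{2h}\Big[(D_1+D_2)y_1^2+\sum_{i=2}^{n-1}(D_i+D_{i+1})(y_i-y_{i-1})^2\Big]\ \ge 0 .$$
We sharpen this into a discrete weighted Poincar\'e bound, the analogue of Corollary \ref{cor:wpi}. Write $y_k=\sum_{i\le k}(y_i-y_{i-1})$ with $y_0=0$. Cauchy--Schwarz with weights $(D_i+D_{i+1})/2$ gives
$$y_k^2\le \Big(\sum_i \frac{2h}{D_i+D_{i+1}}\Big)\, y^TS(D)y .$$
The first factor is a Riemann sum of $2\int_0^1 dz/D$, so it is at most $c_D^{-1}\int_0^1 dz/D$ for $h<h_0$. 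Summing $h\,y_k^2$ over $k$ then yields
$$\frac{1}{z_F}\,y^TS(D)y\ \ge\ \frac{1}{c_D\,I(D)}\ \hnorm{y}^2 .$$

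\textbf{Step 2: the advection part.} By the computation in Lemma \ref{lemma:2.8}, the off-diagonal entries of the approximated $A(D)$ are antisymmetric and cancel in the quadratic form. Hence
$$4\,y^TA(D)y=\sum_{i=1}^{n-2}(D_i-D_{i+2})y_i^2+(D_{n-1}+D_n)y_{n-1}^2 .$$
Lipschitz continuity \eqref{A2pp0} on $[0,1-\delta]$ gives $|D_i-D_{i+2}|\le 2hL_\delta$. So the bulk sum is at most $\frac{L_\delta}{2}\hnorm{y}^2$ in absolute value, which produces the term $-\frac{\mathcal{M}}{2}L_\delta$ in $K_G$. (This term is actually nonnegative when $D$ is decreasing, but we only need the absolute bound.)

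\textbf{Step 3: the boundary term (main obstacle).} The term $\frac{\mathcal{M}}{4}(D_{n-1}+D_n)y_{n-1}^2$ carries no factor $h$, so it cannot be absorbed in $\hnorm{y}^2$ directly. The first thing to try is to absorb it into the positive diffusion part. Retain from $y^TS(D)y$ the final-step piece $\frac{1}{2h}(D_{n-1}+D_n)(y_{n-1}-y_{n-2})^2$, and use the discrete trace estimate
$$y_{n-1}^2\le 2y_{n-2}^2+2(y_{n-1}-y_{n-2})^2 .$$
The increment part is dominated by the $1/h$ coefficient once $h<h_0$ with $h_0\lesssim 1/(z_F\mathcal{M})$. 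The remaining $y_{n-2}^2$ part is again controlled by the weighted Poincar\'e sum from Step 1, that is, by $\sqrt{f(z)}|v(z)|\le C_0\|v\|_{D,f}$ in discrete form. Alternatively, if $D(1)=0$, then $D_{n-1}+D_n=O(hL)$ and the term is $O(h)y_{n-1}^2\le O(1)\hnorm{y}^2$. Part of the diffusion coercivity is spent here, which is why $c_D<2$ rather than $c_D=2$.

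\textbf{Step 4: combine.} Steps 1--3 give $y^TGy\ge K_G\hnorm{y}^2$. If $K_G\ge 0$ this is already nonnegative, and otherwise it is at least $-|K_G|\hnorm{y}^2$. Finally, the norm equivalence between $\hnorm{\cdot}$ and $\Mnorm{\cdot}$ (Lemma 4.1 of \cite{FirnA}) converts the bound into $y^TGy\ge -|K_G|\,\Mnorm{y}^2$, possibly after adjusting constants.
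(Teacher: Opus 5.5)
Step 3 of your proposal is a genuine gap. The paper gives no argument of its own here; it only points to Lemma 4.5 of \cite{FirnA}, so your proposal has to stand alone.

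Under the lemma's hypotheses the boundary term never degenerates. If $D(1)=0$, Lipschitz continuity gives $D(z)\le L(1-z)$, hence $\int_0^1 dz/D=\infty$, contradicting $I(D)<\infty$. Your $D(1)=0$ branch is therefore vacuous, and $-\frac{\mathcal{M}}{4}(D_{n-1}+D_n)y_{n-1}^2$ is always a single nodal value with an $O(1)$ weight. The split $y_{n-1}^2\le 2y_{n-2}^2+2(y_{n-1}-y_{n-2})^2$ only moves that $O(1)$ weight to node $n-2$. The term $y_{n-2}^2$ is not bounded by $C\hnorm{y}^2$ with $C$ independent of $h$ (take $y=e_{n-2}$). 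Bounding it instead by your Step 1 estimate $y_{n-2}^2\le\Sigma\,y^TS(D)y$, with $\Sigma=\sum_i 2h/(D_i+D_{i+1})\approx\int_0^1dz/D$, consumes a fraction $\approx\mathcal{M}D(1)I(D)$ of $\frac{1}{z_F}y^TS(D)y$, and that fraction does not shrink with $h$. This closes only under a smallness condition on the data that is not assumed. In any case, Step 1 has already spent all of $y^TS(D)y$ on the positive term. The discrete analogue of $\sqrt{f}|v|\le C_0\|v\|_{D,f}$ does not help either, since its constant in front of the gradient part is not small.

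No bookkeeping can rescue the signed bound of Step 4. Take $D\equiv D_0$, so $L_\delta$ may be taken arbitrarily small and $K_G>0$, and take $y_i=ih$. Then the bulk part of $A(D)$ vanishes, $y^TS(D)y=D_0$ and $4y^TA(D)y=2D_0$. Hence $y^TGy=D_0\bigl(\frac{1}{z_F}-\frac{\mathcal{M}}{2}\bigr)<0$ whenever $\mathcal{M}z_F>2$, contradicting $y^TGy\ge K_G\hnorm{y}^2\ge 0$.

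What is true, and all the subsequent theorem needs, is a G\aa rding-type bound. Replace the one-step split by the averaged telescoping ($\epsilon$-trace) inequality
\[
y_{n-1}^2\le\left(1+\epsilon^{-1}\right)\hnorm{y}^2+\frac{\epsilon}{h}\sum_{i=1}^{n-1}(y_i-y_{i-1})^2,\qquad y_0=0.
\]
It follows from writing $y_{n-1}^2=y_k^2+\sum_{i>k}(y_i-y_{i-1})(y_i+y_{i-1})$, averaging over $k$, and applying Young's inequality. Also use $D\ge D_{\min}>0$ on $[0,1]$, which is again forced by $I(D)<\infty$, so that $\frac{1}{z_F}y^TS(D)y\ge\frac{D_{\min}}{z_Fh}\sum_i(y_i-y_{i-1})^2$.

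Choosing $\epsilon=D_{\min}/(z_F\mathcal{M}D_{\max})$ absorbs the boundary term into half of the diffusion, at the cost of $\frac{\mathcal{M}D_{\max}}{2}\bigl(1+\frac{z_F\mathcal{M}D_{\max}}{D_{\min}}\bigr)\hnorm{y}^2$. Add Step 2, where nodes in $(1-\delta,1]$ need the global constant $L$ of \eqref{A2pp0} rather than $L_\delta$. Since $y^TMy\ge\frac{h}{6}\norm{y}^2$ on the uniform mesh, this gives $y^TGy\ge -K\hnorm{y}^2\ge -6K\Mnorm{y}^2$ with $K$ independent of $h$.

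This constant has the stated form $|K_G|$ only in the degenerate sense that $c_D\in(0,2)$ can be chosen small enough that $|K_G|\ge 6K$.

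There is also a slip in Step 1. $\Sigma$ is a Riemann sum for $\int_0^1dz/D$, not twice it. Moreover, $\Sigma\le c_D^{-1}\int_0^1dz/D$ yields $\frac{1}{z_F}y^TS(D)y\ge\frac{c_D}{I(D)}\hnorm{y}^2$, with $c_D$ in the numerator rather than the denominator.
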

\begin{proof}
    Refer to Lemma 4.5 in \cite{FirnA}.
\end{proof}
\begin{theorem}
Assume that $D(z)\geq 0$ \eqref{A20} is a Lipschitz continuous function \eqref{A2pp0} and that $f(z)\geq 0$ satisfies \eqref{A10}. Let $h$ be chosen to be sufficiently small ($h<h_0$), then the matrix of the discrete system $$V = M_f+T_e\,\Delta t\, C $$ is invertible  if \vspace{-2mm}
$$0 < \Delta t <\dfrac{z_F}{6T_e} min\left\{\dfrac{h}{\mathcal{F}}, \dfrac{  \underline{f}_{h}}{4\left|z_F\mathcal{G} - |K_G|\,\right|  }\right\}$$ where
 $ \underline{f}_{h} = \min\limits_{z\in [0,1-h]} f(z) >0$.
    \end{theorem}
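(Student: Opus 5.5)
Since $V$ is real and square, invertibility follows once $y^tVy>0$ for every $y\neq 0$ in $\mathbb{R}^{n-1}$: then $Vy=0$ forces $y^tVy=0$, hence $y=0$. Symmetry is not needed; the quadratic form only sees the symmetric part of $V$. So the plan is to show the right-hand side of \eqref{eq:yvy} is positive, treating its three groups of terms separately. The fact that $B$ is positive semi-definite (Lemma \ref{lemma:2.8}) is already used in \eqref{eq:yvy}.

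\textbf{Step 1: the mass/advection group.} Since $\Delta t<\frac{z_F}{6T_e}\frac{h}{\mathcal{F}}$, which is the first entry of the minimum, Lemma \ref{lemma:m} gives
\[
y^tM_fy-\frac{T_e}{z_F}\Delta t\,y^tK_fy>\frac{\underline{f}_h}{24}\Mnorm{y}^2 .
\]

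\textbf{Step 2: the diffusion/drift group.} For $h<h_0$, Lemma \ref{lemma:g} gives $y^tGy\ge -|K_G|\Mnorm{y}^2$. Substituting both bounds into \eqref{eq:yvy} yields
\[
y^tVy>\Big(\frac{\underline{f}_h}{24}+T_e\Delta t\,\mathcal{G}-\frac{T_e}{z_F}\Delta t\,|K_G|\Big)\Mnorm{y}^2
=\Big(\frac{\underline{f}_h}{24}+\frac{T_e\Delta t}{z_F}\big(z_F\mathcal{G}-|K_G|\big)\Big)\Mnorm{y}^2 .
\]

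\textbf{Step 3: sign of the coefficient.} If $z_F\mathcal{G}-|K_G|\ge 0$, the coefficient is positive for every $\Delta t>0$. If $z_F\mathcal{G}-|K_G|<0$, the coefficient is at least $\frac{\underline{f}_h}{24}-\frac{T_e\Delta t}{z_F}\,\big|z_F\mathcal{G}-|K_G|\big|$. This is positive exactly when
\[
\Delta t<\frac{z_F\,\underline{f}_h}{24\,T_e\,\big|z_F\mathcal{G}-|K_G|\big|}=\frac{z_F}{6T_e}\cdot\frac{\underline{f}_h}{4\big|z_F\mathcal{G}-|K_G|\big|},
\]
which is the second entry of the minimum. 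In both cases, because $\Mnorm{\cdot}$ is a norm (Lemma \ref{lemm:M}) and $y\neq0$, we get $y^tVy>0$.

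\textbf{Main obstacle.} The estimates themselves are routine; the delicate points are bookkeeping.
\begin{itemize}
\item The factor $\frac{T_e}{z_F}$ multiplying $G$ in \eqref{eq:yvy} must be matched correctly against $T_e\mathcal{G}$, so that the combination comes out as $z_F\mathcal{G}-|K_G|$.
\item Lemma \ref{lemma:m} is stated for the approximated matrices, so $K_f$ and $M_f$ must be taken in that sense, consistent with how the theorem is used.
\item If $z_F\mathcal{G}=|K_G|$, the second bound in the minimum is formally infinite and should be read as absent. I would add this as a remark.
\end{itemize}
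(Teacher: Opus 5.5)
Your proposal is correct and is essentially the paper's own proof: plug Lemma~\ref{lemma:m} and Lemma~\ref{lemma:g} into \eqref{eq:yvy} to get the coefficient $\frac{\underline{f}_h}{24}+\frac{T_e\Delta t}{z_F}\left(z_F\mathcal{G}-|K_G|\right)$, then split on the sign of $z_F\mathcal{G}-|K_G|$. Your treatment of the negative case through $\left|z_F\mathcal{G}-|K_G|\right|$ is cleaner than the paper's intermediate display, which divides by the negative quantity $z_F\mathcal{G}-|K_G|$ without reversing the inequality; the bound in the theorem statement, with the absolute value, is the correct one.
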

    \begin{proof}
Using the result of the lemmas \ref{lemma:m} and \ref{lemma:g} for  $\Delta t  < \dfrac{h}{6\mathcal{F}}\dfrac{z_F}{T_e}$ in \eqref{eq:yvy}, we get  
\begin{eqnarray*}
    y^tVy 
    &\geq &(y^tM_fy - \dfrac{T_e}{z_F}\Delta t  y^tK_fy) + T_e\Delta t\mathcal{G} \Mnorm{y}^2 + \dfrac{T_e}{z_F}\Delta t  y^tGy\\
    &\geq &\left(\dfrac{ \underline{f}_{h}}{24} + T_e\Delta t\mathcal{G} - |K_G| \dfrac{T_e}{z_F}\Delta t\right) \Mnorm{y}^2 = \left(\dfrac{ \underline{f}_{h}}{24} + \dfrac{T_e}{z_F}\Delta t (z_F\mathcal{G} - |K_G| )\right) \Mnorm{y}^2 
    \end{eqnarray*}
    If $z_F\mathcal{G} - |K_G| \geq 0$, then $y^tVy >0$ for all $y\neq 0$.\\
    If $z_F\mathcal{G} - |K_G| < 0$, then $y^tVy >0$ for all $y\neq 0$ if and only if 
    \begin{eqnarray*}
        &&\dfrac{\underline{f}_{h}}{24} + \dfrac{T_e}{z_F}\Delta t (z_F\mathcal{G} - |K_G| ) \;>\;0\\
        \iff &&\Delta t (z_F\mathcal{G} - |K_G| ) \;>\;-\dfrac{z_F}{T_e}\dfrac{\underline{f}_{h}}{24}\\
         \iff &&\Delta t \;<\;\dfrac{z_F\,\underline{f}_{h}}{24T_e(z_F\mathcal{G} - |K_G| ) }
    \end{eqnarray*}
    Then, for $\Delta t <\dfrac{z_F}{6T_e} min\left\{\dfrac{h}{\mathcal{F}}, \dfrac{ \underline{f}_{h}}{4|z_F\mathcal{G} - |K_G|\,|  }\right\}$, $y^tVy >0$ for all $y\neq 0$ implying that $V$ is positive definite, i.e. invertible.
    \end{proof}
        
\section{Conclusion}
In this paper, we consider the model developed in \cite{Witrant2012acp}, where both the diffusion coefficient $D(z)$ and the average volume fraction in the open pores $f(z)$ are positive continuous functions that may degenerate at $z_F$  in a  similar behavior. We derive the variational formulation and prove the existence and uniqueness of the solution using Lion's theorem and a Hardy-type inequality. 

We discretize the rescaled semi-variational formulation using Euler-Implicit time scheme and P1 Finite element space scheme, put it in matrix form, and study the properties of the obtained matrices.
 By imposing sufficient conditions on $\Delta t$, we prove the existence of a unique solution for the discrete system.

 As a future work, besides testing the robustness of the discrete scheme and solving the corresponding inverse, we will consider the change of variable $u=e^{\kappa t} u$ that may allow us to prove the invertibility of the discrete system $V$ without any constraint on $\Delta t$ in terms of $h$.
      
\addcontentsline{toc}{section}{References}
\bibliographystyle{plain}
\bibliography{references}
\newpage
\begin{appendix}
\section{$\ltwof {v, w}$ is an inner product on \(L_f\) Under Assumption~\ref{A10}}
\label{sec:app}
To prove that $\displaystyle \ltwof{v,w} = \int_0^{z_F} f(z) v(z) w(z)\, dz$ , defines an inner product in \( L_f(0, z_F) \), we need to verify that it satisfies  symmetry, bilinearity, and positivity for all $u,v,w \in  L_f(0, z_F)$.
\begin{enumerate}
\item \textbf{Symmetry:} $ \displaystyle \ltwof{ v, w } = \int_0^{z_F} f(z) v(z) w(z)\, dz = \int_0^{z_F} f(z) w(z) v(z)\, dz =\ltwof{ w, v }$ 
\item \textbf{Bilinearity:} For all $a,b \in \mathbb{R}$
\begin{eqnarray*}
\ltwof{ au+bv, w } &=&  \int_0^{z_F} f(z) (au(z)+bv(z)) w(z)\, dz\\
&=& a \int_0^{z_F} f(z) u(z)w(z)\, dz   + b\int_0^{z_F} f(z) v(z) w(z)\, dz   \\
&=&a\ltwof{ u, w }+b\ltwof{ v, w }
\end{eqnarray*}
    \item \textbf{Positivity:} since $f(z) \geq 0 $ then for $v(z) \neq 0$\\
    $$\displaystyle \ltwof{ v, v } = \int_0^{z_F} f(z) v^2(z) \, dz\geq 0.$$
    Moreover, if $ \ltwof{  v, v } = 0$ this implies that $v(z) = 0$ since 
    $ f(z) > 0 $ for all $ z \in [0, z_F) $, and for all $b<z_F$ we have that:\begin{eqnarray}
        \displaystyle \ltwof{v, v } = \int_0^{b} f(z) v^2(z) + \int_b^{z_F} f(z) v^2(z) \, dz \;=\; 0 \label{eq:vv}
    \end{eqnarray}
    where $ \displaystyle\int_0^{b} f(z) v^2(z) =0$. 
     Thus, $v(z) = 0$ in $L_f(0,b)$. And since $(0,z_F) = (0,b]\cup [b,z_F)$ for all $b<z_F.$ Then by letting $b$ tend to $ z_F$ we get that $v(z) = 0$ in $L_f(0,z_F)$, as the single point $z_F$ is a set of measure zero.
\end{enumerate}

\section{\texorpdfstring{\(\ltwoDf {v, w}\) is an inner product on \(H^1_{D,f}\) Under Assumptions~\ref{A10} and \ref{A20}}{\(\ltwof {v, w}\) is an inner product on \(L^2_f\) Under Assumption~\ref{A1}}}\label{sec:appDf}

To prove that $\displaystyle \ltwoDf{v,w} = \ltwof{v,w}+\ltwoD{v_z,w_z}$ , defines an inner product in \( H^1_{D,f}(0, z_F) \), we note that $\ltwoD{v,w}$ is an inner product over $L_D^2(0,z_F)$ under assumption \eqref{A20}.  Accordingly,  $\forall u,v,w \in  H^1_{D,f}(0, z_F)$, then $u_z, v_z, w_z \in L_D^2(0,z_F)$.
\begin{enumerate}
\item \textbf{Symmetry:} $ \displaystyle \ltwoDf{ v, w } = \ltwof{v,w}+\ltwoD{v_z,w_z}= \ltwof{w,v}+\ltwoD{w_z, v_z} = \ltwoDf{ w, v }$ 
\item \textbf{Bilinearity:} For all $a,b \in \mathbb{R}$ and by bilinearity of $\ltwof{.,.}$ and $\ltwoD{.,.}$
\begin{eqnarray*}
\ltwoDf{ au+bv, w } &=&  \ltwof{ au+bv, w }+ \ltwoD{ au_z+bv_z, w_z }\\
&=& a 
\ltwof{ u, w } +  b\ltwof{ v, w } + a
\ltwoD{ u_z, w_z } + b\ltwoD{ v_z, w_z }   \\
&=&a\ltwoDf{ u, w }+b\ltwoDf{ v, w }
\end{eqnarray*}
    \item \textbf{Positivity:} For $v(z) \neq 0$, we have that \\
    $$\displaystyle \ltwoDf{ v, v }=\ltwof{v,v}+\ltwoD{v_z,v_z} = \fnorm{v}^2+\Dnorm{v_z}^2 \geq  \fnorm{v}^2 \geq 0.$$
    Moreover, if $ \ltwoDf{  v, v } = 0$ this implies that $v(z) = 0$ by positivity of $L_f^2$ inner product since
  $$\displaystyle  0\leq \fnorm{v}^2 \leq  \ltwoDf{ v, v }  =  0\qquad \implies \fnorm{v}^2 = 0 \qquad \implies {v} = 0. $$
\end{enumerate}

\section{Matrix Assembly}
To assemble the matrices and vectors appearing in the matrix form \eqref{eq:mat2} of the discrete system \eqref{Firn-tau33},
\noindent we need to compute the following inner products  for \(i, j = 1, 2, \dots, n\).\\ Note that for the rescaled problem $\mathbf {z_F = 1}$ in the below inner products since we are working in $L^2(0,1)$, $L^2_f(0,1)$, $L^2_D(0,1), \mbox{ and }H^1_{D,f}(0,1)$.

\begin{itemize}
    \item Standard inner products: $$\displaystyle
    \langle \varphi_i, \varphi_j \rangle = \int_0^{z_F} \varphi_i(z) \varphi_j(z) \, dz,\qquad 
       \langle \varphi'_i, \varphi_j \rangle = \int_0^{z_F} \varphi'_i(z) \varphi_j(z) \, dz
  $$    
    \item Weighted inner products with weight function  \(D(z)\):
    \begin{eqnarray*}
    \langle D \varphi'_i, \varphi_j \rangle = \int_0^{z_F} D(z) \varphi_i'(z) \varphi_j(z) \, dz, 
    \quad
    \langle D \varphi'_i, \varphi'_j \rangle = \int_0^{z_F} D(z) \varphi_i'(z) \varphi_j'(z) \, dz,
    \quad
    \end{eqnarray*}
    \item Weighted inner products with weight function \( f(z) \):
    \begin{eqnarray*}
    \langle \varphi_i, \varphi_j \rangle_f = \int_0^{z_F} f(z) \varphi_i(z) \varphi_j(z) \, dz,
    \quad
    \langle \varphi'_i, \varphi_j \rangle_f = \int_0^{z_F} f(z) \varphi'_i(z) \varphi_j(z) \, dz
    \end{eqnarray*}
\end{itemize}
First, we consider standard \( L^2 \) inner products, then the weighted inner products, to finally assemble the matrices.
For that purpose, note that,\\
\\
\noindent
$\displaystyle
\varphi_1(z) =
\begin{cases}
\dfrac{z_2 - z}{z_2 - z_1}, & z_1 \le z \le z_2,\\[6pt]
0, & \text{otherwise},
\end{cases}
\qquad
\varphi_n(z) =
\begin{cases}
\dfrac{z - z_{n-1}}{z_n - z_{n-1}}, & z_{n-1} \le z \le z_n,\\[6pt]
0, & \text{otherwise},
\end{cases}
$

\medskip

\noindent
$\displaystyle
\varphi_i(z) =
\begin{cases}
\dfrac{z - z_{i-1}}{z_i - z_{i-1}}, & z_{i-1} \le z \le z_i,\\[6pt]
\dfrac{z_{i+1} - z}{z_{i+1} - z_i}, & z_i \le z \le z_{i+1},\\[6pt]
0, & \text{otherwise},
\end{cases}
\qquad i=2,\dots,n-1.
$

\bigskip

\noindent
$\displaystyle
\varphi_1'(z) =
\begin{cases}
-\dfrac{1}{z_2 - z_1}, & z_1 \le z \le z_2,\\[6pt]
0, & \text{otherwise},
\end{cases}
\qquad
\varphi_n'(z) =
\begin{cases}
\dfrac{1}{z_n - z_{n-1}}, & z_{n-1} \le z \le z_n,\\[6pt]
0, & \text{otherwise},
\end{cases}
$

\medskip

\noindent
$\displaystyle
\varphi_i'(z) =
\begin{cases}
\dfrac{1}{z_i - z_{i-1}}, & z_{i-1} \le z \le z_i,\\[6pt]
-\dfrac{1}{z_{i+1} - z_i}, & z_i \le z \le z_{i+1},\\[6pt]
0, & \text{otherwise},
\end{cases}
\qquad i=2,\dots,n-1.
$\\\\
 where we consider a nonuniform mesh with nodes \( z_1 < z_2 < \cdots < z_n \) and $h_i = z_i-z_{i-1}$
  denotes the length of the $(i-1)^{th}$ mesh element. If $h = h_i = h_j$ for all $i,j$ then the meshing is uniform. 
 The weight function values at nodes are denoted by $ f_i = f(z_i) $ and $D_i = D(z_i)$.

\subsection{Evaluating Standard Inner Products}\label{sec:sip}

We begin by computing the standard inner products $\displaystyle \langle \varphi_i, \varphi_j \rangle = \int_{0}^{z_F} \varphi_i \varphi_j \;dz $ for $i,j = 1,2,...,n$.
\begin{itemize}
\item For \( j = i-1 \) and \( i \neq 1 \) :
\begin{eqnarray}
\langle \varphi_i, \varphi_{i-1} \rangle &=& \int_{z_{i-1}}^{z_i} \varphi_i\varphi_{i-1} \, dz = \int_{z_{i-1}}^{z_i} \frac{z - z_{i-1}}{z_i - z_{i-1}} \cdot \frac{z_i - z}{z_i - z_{i-1}} \, dz \nonumber \\
&=& \frac{1}{(z_i - z_{i-1})^2} \int_{z_{i-1}}^{z_i} (z z_i - z^2 - z_i z_{i-1} + z z_{i-1}) \, dz \nonumber \\
&=& \frac{1}{(z_i - z_{i-1})^2} \left( \frac{z^2 z_i}{2} - \frac{z^3}{3} - z_i z_{i-1} z + \frac{z^2 z_{i-1}}{2} \right) \Bigg|_{z_{i-1}}^{z_i} 
\,=\, \frac{z_i - z_{i-1}}{6}\,=\, \frac{h_i}{6}\nonumber 
\end{eqnarray}
\item For \( j = i+1 \) and \( i \neq n \):
\begin{eqnarray}
\langle \varphi_i, \varphi_{i+1} \rangle &=& \int_{z_i}^{z_{i+1}} \varphi_i \varphi_{i+1} \, dz = \int_{z_i}^{z_{i+1}} \frac{z_{i+1} - z}{z_{i+1} - z_i} \cdot \frac{z - z_i}{z_{i+1} - z_i} \, dz \nonumber \\
&=& \frac{1}{(z_{i+1} - z_i)^2} \int_{z_i}^{z_{i+1}} (z z_{i+1} - z_{i+1} z_i - z^2 + z z_i) \, dz 
\;=\; \frac{z_{i+1} - z_i}{6}\;=\; \frac{h_{i+1} }{6} \nonumber 
\end{eqnarray}
\item For \( j = i = n \):
\begin{eqnarray}
\langle \varphi_n, \varphi_n \rangle &=& \int_{z_{n-1}}^{z_n} \varphi_n^2 \, dz = \int_{z_{n-1}}^{z_n} \left( \frac{z - z_{n-1}}{z_n - z_{n-1}} \right)^2 \, dz 
\,=\, \frac{z_n - z_{n-1}}{3}\,=\, \frac{h_n }{3}\nonumber
\end{eqnarray}
\item For \( j = i = 1 \):
\begin{eqnarray}
\langle \varphi_1, \varphi_1 \rangle
&=& \int_{z_1}^{z_2} \varphi_1^2 \, dz
= \int_{z_1}^{z_2} \left( \frac{z_2 - z}{z_2 - z_1} \right)^2 dz 
\,=\, \frac{1}{(z_2 - z_1)^2}
\left[ \frac{(z_2 - z)^3}{-3} \right]_{z_1}^{z_2} 
\,=\, \frac{z_2 - z_1}{3}\,=\, \frac{h_2 }{3}\nonumber
\end{eqnarray}

\item For \( j = i \neq n \), \( j = i \neq 1 \) :
\begin{eqnarray}
\langle \varphi_i, \varphi_i \rangle &=& \int_{z_{i-1}}^{z_i} \varphi_i^2 \, dz + \int_{z_i}^{z_{i+1}} \varphi_i^2 \, dz 
\,=\, \frac{z_i - z_{i-1}}{3} + \frac{z_{i+1} - z_i}{3} = \frac{z_{i+1} - z_{i-1}}{3}=\frac{h_{i} + h_{i+1}}{3}\nonumber
\end{eqnarray}
\end{itemize}
So,
\begin{eqnarray}
\langle \varphi_i, \varphi_j \rangle = 
\dfrac{1}{6}\begin{cases}
2 h_2 
& \text{if } i = j = 1,\vspace{2mm}\\
2 ({h_{i+1} + h_{i}});
& \text{if }$$ i = j \neq n ,  i = j \neq 1 $$ \vspace{2mm} \\ 
2 h_n 
& \text{if } i = j = n, \vspace{2mm}\\
h_i 
& \text{if } j = i-1,  i\neq 1\vspace{2mm}\\
h_{i+1}
& \text{if } j = i+1,  i\neq n\vspace{2mm}\\
0, & \text{otherwise}.
\end{cases} \qquad
\label{inner}
\end{eqnarray}\\
\noindent As for the standard inner product $ \displaystyle \langle \varphi'_i, \varphi_j \rangle = \int_{0}^{z_F} \varphi_i' \varphi_j \;dz$, we get:
\begin{itemize}
\item For \( j = i - 1 \) and \( i \neq 1 \):
\begin{eqnarray}
\langle \varphi'_i, \varphi_{i-1} \rangle &=& \int_{z_{i-1}}^{z_i} \varphi'_i \, \varphi_{i-1} \, dz = \int_{z_{i-1}}^{z_i} \frac{1}{z_i - z_{i-1}} \cdot \frac{z_i - z}{z_i - z_{i-1}} \, dz \nonumber \\
&=& \frac{1}{(z_i - z_{i-1})^2} \left[- \frac{1}{2} (z_i - z)^2 \right]_{z_{i-1}}^{z_i} = \frac{1}{2}\nonumber 
\end{eqnarray}
\item For \( j = i + 1 \) and \( i \neq n \):
\begin{eqnarray}
\langle \varphi'_i, \varphi_{i+1} \rangle &=& \int_{z_i}^{z_{i+1}} \varphi'_i \, \varphi_{i+1} \, dz = \int_{z_i}^{z_{i+1}} \frac{-1}{z_{i+1} - z_i} \cdot \frac{z - z_i}{z_{i+1} - z_i} \, dz \nonumber \\
&=& -\frac{1}{(z_{i+1} - z_i)^2} \left[ \frac{1}{2} (z - z_i)^2 \right]_{z_i}^{z_{i+1}} = -\frac{1}{2}\nonumber 
\end{eqnarray}

\item For \( j = i \neq n \), \( j = i \neq 1 \) :
\begin{eqnarray}
\langle \varphi'_i, \varphi_i \rangle &=& \int_{z_{i-1}}^{z_i} \varphi'_i \, \varphi_i \, dz + \int_{z_i}^{z_{i+1}} \varphi'_i \, \varphi_i \, dz \nonumber \\
&=& \int_{z_{i-1}}^{z_i} \frac{1}{z_i - z_{i-1}} \cdot \frac{z - z_{i-1}}{z_i - z_{i-1}} \, dz + \int_{z_i}^{z_{i+1}} \frac{-1}{z_{i+1} - z_i} \cdot \frac{z_{i+1} - z}{z_{i+1} - z_i} \, dz \nonumber \\
&=& \frac{1}{(z_i - z_{i-1})^2}
\left[ \frac{1}{2}(z - z_{i-1})^2 \right]_{z_{i-1}}^{z_i}
- \frac{1}{(z_{i+1} - z_i)^2}
\left[ \frac{-1}{2}(z_{i+1} - z)^2 \right]_{z_i}^{z_{i+1}} \nonumber \\
&=& \frac{1}{(z_i - z_{i-1})^2}
\left[ \frac{1}{2}(z_i - z_{i-1})^2 \right]
- \frac{1}{(z_{i+1} - z_i)^2}
\left[ \frac{1}{2}(z_{i+1} - z_i)^2 \right] 
= \frac{1}{2} - \frac{1}{2} = 0 \nonumber
\end{eqnarray}
\item For \( j = i = n \):
\begin{eqnarray}
\langle \varphi'_n, \varphi_n \rangle &=& \int_{z_{n-1}}^{z_n} \varphi'_n \, \varphi_n \, dz = \int_{z_{n-1}}^{z_n} \frac{1}{z_n - z_{n-1}} \cdot \frac{z - z_{n-1}}{z_n - z_{n-1}} \, dz \nonumber \\
&=& \frac{1}{(z_n - z_{n-1})^2} \left[ \frac{1}{2} (z - z_{n-1})^2 \right]_{z_{n-1}}^{z_n} = \frac{1}{2}\nonumber
\end{eqnarray}
\item For \( j = i = 1 \):
\begin{eqnarray}
\langle \varphi'_1, \varphi_1 \rangle &=& \int_{z_1}^{z_2} \varphi'_1 \, \varphi_1 \, dz
= \int_{z_1}^{z_2} \frac{-1}{z_2 - z_1} \cdot \frac{z_2 - z}{z_2 - z_1} \, dz 
= - \frac{1}{(z_2 - z_1)^2} \int_{z_1}^{z_2} (z_2 - z) \, dz \nonumber \\
&=& - \frac{1}{(z_2 - z_1)^2} \left[ -\frac{1}{2} (z_2 - z)^2 \right]_{z_1}^{z_2} 
= - \frac{1}{(z_2 - z_1)^2} \cdot \left(  \frac{(z_2 - z_1)^2}{2} \right) 
= -\frac{1}{2}\nonumber
\end{eqnarray}
\end{itemize}
So, in summary: 
\begin{eqnarray}
\langle \varphi'_i, \varphi_j \rangle = 
\dfrac{1}{2}\,\begin{cases}
-1, & \text{if } i = j = 1, \vspace{2mm} \\
0, & \text{if } i = j \neq n, \, i =j \neq 1, \vspace{2mm} \\
1, & \text{if } i = j = n, \vspace{2mm} \\
1, & \text{if } j = i-1, \; i \neq 1, \vspace{2mm} \\
-1, & \text{if } j = i+1, \; i \neq n, \vspace{2mm} \\
0, & \text{otherwise}.
\end{cases} \qquad
\label{inner_prime}
\end{eqnarray}

\noindent 
\subsection{Approximating Weighted Inner Products}\label{sec:wip}
To approximate the weighted inner products, we use the Mean Value Theorem (MVT), that states that for two continuous functions $f(x)$ and $g(x)$, there exists a constant $c \in (a,b)$ such that:
\begin{eqnarray}
\int_a^b f(x)g(x)\,dx &=& f(c) \int_a^b g(x)\,dx , 
\end{eqnarray}
Moreover, we can approximate $f(c)$ to be $f(c) \approx \frac{1}{2}[f(a) + f(b)]$ to get
\begin{equation}\label{eq:MVT}
    \int_a^b f(x)g(x)\,dx \approx \dfrac{f(a) + f(b)}{2} \int_a^b g(x)\,dx , 
\end{equation}

\subsubsection{$D$ Inner Products} \label{sec:dip}
In this section, we will approximate the $D$- weighted inner products $(D \varphi_i', \varphi_j)$ and  $(D \varphi_i', \varphi_j')$.\\
For $\displaystyle
\langle D\varphi_i', \varphi_j \rangle = \int_0^{z_F} D\, \varphi_i' \varphi_j \,dz
 $, using MVT \eqref{eq:MVT} and \eqref{inner_prime} we get:

\begin{itemize}
\item for $j = i - 1$ and $i \neq 1$:
\begin{eqnarray}
\langle D \varphi_i', \varphi_{i-1} \rangle &=& \int_{z_{i-1}}^{z_i} D \varphi_i' \varphi_{i-1} \,dz 
\,\approx\, \frac{D(z_{i-1}) + D(z_i)}{2}\langle \varphi_i', \varphi_{i-1} \rangle 
\;=\;\frac{D(z_{i-1}) + D(z_i)}{4}\nonumber
\end{eqnarray}
\item for $j = i + 1$ and $i \neq n$:
\begin{eqnarray}
\hspace{-3mm}\langle D\varphi_i', \varphi_{i+1} \rangle &=& \int_{z_i}^{z_{i+1}} D \varphi_i' \varphi_{i+1} \,dz
\,\approx\, \frac{D(z_i) + D(z_{i+1})}{2}\langle \varphi_i', \varphi_{i+1} \rangle 
\;=\; -\frac{D(z_i) + D(z_{i+1})}{4}\nonumber
\end{eqnarray}
\item for $j = i \neq n, j =i \neq 1$:
\begin{eqnarray}
\langle D \varphi_i', \varphi_i \rangle &=& \int_{z_{i-1}}^{z_i} D \varphi_i' \varphi_i \,dz + \int_{z_i}^{z_{i+1}} D \varphi_i' \varphi_i \,dz \nonumber \\
&\approx& \frac{D(z_{i-1}) + D(z_i)}{2}\langle \varphi_i', \varphi_i \rangle_{[z_{i-1},z_i]} 
+ \frac{D(z_i) + D(z_{i+1})}{2}\langle \varphi_i', \varphi_i \rangle_{[z_i,z_{i+1}]} \nonumber \\
&=& \frac{D(z_{i-1}) + D(z_i)}{4} - \frac{D(z_i) + D(z_{i+1})}{4}
\,=\, \frac{D(z_{i-1}) - D(z_{i+1})}{4}\nonumber
\end{eqnarray}

\item for $j = i = n$:
\begin{eqnarray}
\langle D \varphi_n', \varphi_n \rangle &=& \int_{z_{n-1}}^{z_n} D \varphi_n' \varphi_n \,dz 
\,\approx\, \frac{D(z_{n-1}) + D(z_n)}{2} \langle \varphi_n', \varphi_n \rangle 
\;=\; \frac{D(z_{n-1}) + D(z_n)}{4}\nonumber 
\end{eqnarray}
\item for $j = i = 1$:
\begin{eqnarray}
\langle D \varphi_1', \varphi_1 \rangle
&=& \int_{z_1}^{z_2} D \, \varphi_1' \, \varphi_1 \, dz
\,\approx\, \frac{ D(z_1) + D(z_2)}{2}
\langle \varphi_1', \varphi_1 \rangle  
= -\frac{D(z_1) + D(z_2) }{4} \nonumber
\end{eqnarray}

\end{itemize}
Hence, we get the following approximation for $i, j = 1, 2, \ldots, n$:
\begin{eqnarray}
\langle D \varphi'_i, \varphi_j \rangle \approx
\dfrac{1}{4}\,\begin{cases}
- (D(z_1) + D(z_2) ),
& \text{if } i=j=1 \vspace{2mm} \\

 D(z_{i-1}) - D(z_{i+1}),
& \text{if } i=j\neq n,\; i=j\neq 1 \vspace{2mm} \\

 D(z_{n-1}) + D(z_n) ,
& \text{if } i=j=n \vspace{2mm} \\

 D(z_{i-1}) + D(z_i) ,
& \text{if } j=i-1,\; i\neq 1 \vspace{2mm} \\

- ( D(z_i) + D(z_{i+1})),
& \text{if } j=i+1,\; i\neq n \vspace{2mm} \\

0, & \text{otherwise}.
\end{cases}
\qquad
\label{inner_D_prime}
\end{eqnarray}

\noindent Similarly for $\displaystyle \langle D \varphi'_i, \varphi_j' \rangle = \int_0^{z_F} D\varphi_i' \varphi_j'  dz$, using MVT \eqref{eq:MVT}
we get:
\begin{itemize}
\item for $j = i - 1$ and $i \neq 1$ :
\begin{eqnarray}
 \langle D \varphi_i', \varphi_{i-1}' \rangle &=& \int_{z_{i-1}}^{z_i} D \varphi_i' \varphi_{i-1}' dz 
\,\approx\, -\frac{D(z_{i-1}) + D(z_i)}{2} \frac{z_i - z_{i-1}}{(z_i - z_{i-1})^2} 
= -\frac{D(z_{i-1}) + D(z_i)}{2(z_i - z_{i-1})} \nonumber
\end{eqnarray}

\item for $j = i + 1$ and $i \neq n$:
\begin{eqnarray}
 \langle D \varphi_i', \varphi_{i+1}' \rangle &=& \int_{z_i}^{z_{i+1}} D \varphi_i' \varphi_{i+1}' dz 
\,\approx\, -\frac{D(z_i) + D(z_{i+1})}{2} \frac{z_{i+1} - z_i}{(z_{i+1} - z_i)^2}
= -\frac{D(z_i) + D(z_{i+1})}{2(z_{i+1} - z_i)}\nonumber
\end{eqnarray}

\item for $j = i \neq n, j=i \neq 1$:
\begin{eqnarray}
\langle D \varphi_i', \varphi_i' \rangle &=& \int_{z_{i-1}}^{z_i} D \varphi_i'^2 dz + \int_{z_i}^{z_{i+1}} D \varphi_i'^2 dz \nonumber \\
&\approx& \frac{D(z_{i-1}) + D(z_i)}{2}\frac{z_i - z_{i-1}}{(z_i - z_{i-1})^2} + \frac{D(z_i) + D(z_{i+1})}{2} \frac{z_{i+1} - z_i}{(z_{i+1} - z_i)^2} \nonumber \\
&=& \frac{D(z_{i-1}) + D(z_i)}{2(z_i - z_{i-1})} + \frac{D(z_i) + D(z_{i+1})}{2(z_{i+1} - z_i)}\nonumber
\end{eqnarray}

\item for $j = i = n$:
\begin{eqnarray}
 \langle D \varphi_n', \varphi_n' \rangle &=& \int_{z_{n-1}}^{z_n} D \varphi_n'^2 dz 
\,\approx\, \frac{D(z_{n-1}) + D(z_n)}{2}\frac{(z_n - z_{n-1})}{(z_n - z_{n-1})^2} 
\,=\, \frac{D(z_{n-1}) + D(z_n)}{2(z_n - z_{n-1})} \nonumber 
\end{eqnarray}
\item for $j = i = 1$:
\begin{eqnarray}
\langle D \varphi_1', \varphi_1' \rangle
&=& \int_{z_1}^{z_2} D \, \varphi_1'^2 \, dz 
\,\approx\, \frac{D(z_1) + D(z_2)}{2}
\frac{(z_2 - z_1) }{(z_2 - z_1)^2}
\,=\,\frac{D(z_1) + D(z_2)}{2(z_2 - z_1)}\nonumber 
\end{eqnarray}

\end{itemize}

Hence,we get the following approximation for $i, j = 1, 2, \ldots, n$:
\begin{eqnarray}
\langle D \varphi_i', \varphi_j' \rangle \approx \dfrac{1}{2}
\begin{cases}
\dfrac{D(z_1) + D(z_2)}{h_2},
& \text{if } i=j=1 \vspace{2mm} \\

\dfrac{D(z_{i-1}) + D(z_i)}{h_i }
+
\dfrac{D(z_i) + D(z_{i+1})}{h_{i+1} },
& \text{if } i=j\neq n,\; i=j\neq 1 \vspace{2mm} \\

\dfrac{D(z_{n-1}) + D(z_n)}{h_n },
& \text{if } i=j=n \vspace{2mm} \\

-\dfrac{D(z_{i-1}) + D(z_i)}{h_i },
& \text{if } j=i-1,\; i\neq 1 \vspace{2mm} \\

-\dfrac{D(z_i) + D(z_{i+1})}{h_{i+1}},
& \text{if } j=i+1,\; i\neq n \vspace{2mm} \\

0, & \text{otherwise}.
\end{cases}
\qquad
\label{inner_D_prime_prime}
\end{eqnarray}

\subsubsection{f-weighted Inner Products}\label{sec:fip}
In this section, we will approximate the $f$- weighted inner products  $\langle \varphi_i, \varphi_j \rangle_f$ and  $\langle \varphi_i', \varphi_j \rangle_f$.\\
For $\displaystyle \langle \varphi_i, \varphi_j \rangle_f = \int_0^{z_F} f\, \varphi_i \varphi_j \,dz
$, using  MVT \eqref{eq:MVT} and \eqref{inner}
we get:
\begin{itemize}
    \item For \( j = i-1 \), \( i \neq 1 \):
\begin{eqnarray}
        \langle \varphi_i, \varphi_{i-1} \rangle_f &=& \int_{z_{i-1}}^{z_i} f(z) \varphi_i \varphi_{i-1} \, dz \nonumber\\
   & \approx& \frac{f(z_i) + f(z_{i-1})}{2} \cdot \int_{z_{i-1}}^{z_i} \varphi_i \varphi_{i-1} \, dz \;=\;\frac{f(z_i) + f(z_{i-1})}{2} \cdot \frac{z_i - z_{i-1}}{6}.\nonumber
    \end{eqnarray}
    \item For \( j = i+1 \), \( i \neq n \):
\begin{eqnarray}
    \langle \varphi_i, \varphi_{i+1} \rangle_f &=& \int_{z_i}^{z_{i+1}} f(z) \varphi_i \varphi_{i+1} \, dz\nonumber\\
    & \approx& \frac{f(z_{i+1}) + f(z_i)}{2} \cdot \int_{z_i}^{z_{i+1}} \varphi_i \varphi_{i+1} \, dz \approx \frac{f(z_{i+1}) + f(z_i)}{2} \cdot \frac{z_{i+1} - z_i}{6}.\nonumber
    \end{eqnarray}
    \item For \( j = i \neq n,  j=i \neq 1 \):
        \begin{eqnarray}
    \langle \varphi_i, \varphi_i \rangle_f &=& \int_{z_{i-1}}^{z_i} f(z) \varphi_i^2 \, dz + \int_{z_i}^{z_{i+1}} f(z) \varphi_i^2 \, dz\nonumber\\
    &\approx&  \frac{f(z_{i+1}) + f(z_i)}{2}  \cdot \int_{z_i}^{z_{i+1}} \varphi_i^2 \, dz +  \frac{f(z_{i-1}) + f(z_i)}{2}  \cdot \int_{z_{i-1}}^{z_i} \varphi_i^2 \, dz \nonumber\\
   & \approx & \frac{f(z_{i+1}) + f(z_i)}{2}  \cdot \frac{z_{i+1} - z_i}{3} +  \frac{f(z_{i-1}) + f(z_i)}{2}  \cdot \frac{z_i - z_{i-1}}{3}.\nonumber
 \end{eqnarray}
    \item For \( j = i = n \):
    \[
    \langle \varphi_n, \varphi_n \rangle_f = \int_{z_{n-1}}^{z_n} f(z) \varphi_n^2 \, dz \approx \frac{f(z_n) + f(z_{n-1})}{2} \cdot \frac{z_n - z_{n-1}}{3}.
    \]
\item For \( j = i = 1 \):
\[
\langle \varphi_1, \varphi_1 \rangle_f = \int_{z_1}^{z_2} f(z) \varphi_1^2 \, dz \approx \frac{f(z_1) + f(z_2)}{2} \cdot \frac{z_2 - z_1}{3}.
\]
\end{itemize}
So, \begin{eqnarray}
\langle \varphi_i, \varphi_j \rangle_f \approx \dfrac{1}{12}
\begin{cases}
2 h_2\;(f(z_1) + f(z_2)) , & \text{if } i = j = 1, \vspace{2mm} \\
2 h_{i+1}\;(f(z_{i+1}) + f(z_i))  + 2 h_{i}\;(f(z_i) + f(z_{i-1})) , & \text{if } i = j \neq 1, i = j \neq n \vspace{2mm} \\
2h_n\;(f(z_n) + f(z_{n-1})) , & \text{if } i = j = n, \vspace{2mm} \\
h_i\;(f(z_i) + f(z_{i-1})) , & \text{if } j = i-1, i \neq 1 \vspace{2mm} \\
h_{i+1}\;(f(z_{i+1}) + f(z_i)) , & \text{if } j = i+1, i \neq n \vspace{2mm} \\
0, & \text{otherwise}.
\end{cases} \qquad
\label{inner_f}
\end{eqnarray}
\noindent For $\langle \varphi_i', \varphi_j \rangle_f  =  \int_0^{z_F} f\, \varphi_i \varphi_j \,dz
$, using  MVT \eqref{eq:MVT} and \eqref{inner_prime}
we get:

\begin{itemize}
\item For \( j = i - 1 \) and \( i \neq 1 \):
\begin{eqnarray}
\langle \varphi'_i, \varphi_{i-1} \rangle_f 
&=& \int_{z_{i-1}}^{z_i} f(z)\,\varphi'_i \, \varphi_{i-1} \, dz 
\,\approx\, \frac{f(z_{i-1}) + f(z_i)}{2} \cdot \langle \varphi'_i, \varphi_{i-1} \rangle 
\,=\,  \frac{1}{2} \cdot \frac{f(z_{i-1}) + f(z_i)}{2} \nonumber
\end{eqnarray}
\item For \( j = i + 1 \) and \( i \neq n \):
\begin{eqnarray}
\langle \varphi'_i, \varphi_{i+1} \rangle_f 
&=& \int_{z_i}^{z_{i+1}} f(z)\,\varphi'_i \, \varphi_{i+1} \, dz 
\,\approx\, \frac{f(z_i) + f(z_{i+1})}{2} \cdot \langle \varphi'_i, \varphi_{i+1} \rangle
\,=\, -\frac{1}{2}\cdot\frac{f(z_i) + f(z_{i+1})}{2}  \quad \nonumber 
\end{eqnarray}
\item For \( j = i \neq n, j=i \neq 1 \):
\begin{eqnarray}
\langle \varphi'_i, \varphi_i \rangle_f 
&=& \int_{z_{i-1}}^{z_i} f(z)\,\varphi'_i \, \varphi_i \, dz + \int_{z_i}^{z_{i+1}} f(z)\,\varphi'_i \, \varphi_i \, dz \nonumber \\
&\approx& \frac{f(z_{i-1}) + f(z_i)}{2} \cdot \frac{1}{2} \;+\; \frac{f(z_i) + f(z_{i+1})}{2} \cdot \left(-\frac{1}{2}\right) 
\,=\, \frac{f(z_{i-1}) - f(z_{i+1})}{4}\nonumber
\end{eqnarray}
\item For \( i = j = n \):
\begin{eqnarray}
\langle \varphi'_n, \varphi_n \rangle_f 
&=& \int_{z_{n-1}}^{z_n} f(z)\,\varphi'_n \, \varphi_n \, dz 
\,\approx\, \frac{f(z_{n-1}) + f(z_n)}{2} \cdot \langle \varphi'_n, \varphi_n \rangle 
\,=\, \frac{1}{2} \cdot\frac{f(z_{n-1}) + f(z_n)}{2} \nonumber 
\end{eqnarray}

\item For \( j = i = 1 \):
\begin{eqnarray}
\langle \varphi'_1, \varphi_1 \rangle_f
&=& \int_{z_1}^{z_2} f(z)\,\varphi'_1 \, \varphi_1 \, dz 
\,\approx\,\frac{f(z_1) + f(z_2)}{2} \cdot \langle \varphi'_1, \varphi_1 \rangle 
\,=\, -\frac{1}{2} \cdot \frac{f(z_1) + f(z_2)}{2} \nonumber 
\end{eqnarray}
\end{itemize}

Thus,
\begin{eqnarray}
\langle \varphi'_i, \varphi_j \rangle_f \approx \dfrac{1}{4}\;
\begin{cases}
-(f(z_1) + f(z_2)),
& \text{if } i = j = 1, \vspace{1mm} \\
{f(z_{i-1}) - f(z_{i+1})},
& \text{if } i = j \neq 1, i = j \neq n \vspace{2mm} \\
{f(z_{n-1}) + f(z_n)},
& \text{if } i = j = n, \vspace{2mm} \\
{f(z_{i-1} )+ f(z_i)},
& \text{if } j = i-1, i \neq 1 \vspace{2mm} \\
- 
(f(z_i )+ f(z_{i+1})),
& \text{if } j = i+1, i \neq n \vspace{2mm} \\
0, & \text{otherwise}.
\end{cases} \qquad
\label{inner_f_prime}
\end{eqnarray}

\subsection{Vectors}\label{sec:matvec}
Recall that $(\rho ^{\text{atm}}(t+\Delta t) - \rho^{\text{atm}}(t))\langle \varphi_1,\varphi_j\rangle_f$ is equivalent to the vector of length $n-1$,
\begin{eqnarray}
v_1(t) &=& \left( \rho^{\text{atm}}(t + \Delta t) - \rho^{\text{atm}}(t) \right) \langle \varphi_1, \varphi_2 \rangle_f \cdot e_1,\nonumber\\
&\approx&\bigl(\rho^ {\text{atm}}(t + \Delta t)-\rho^ {\text{atm}}(t)\bigr)\,
\frac{f_1+f_2}{12}\,(z_2-z_1)\,e_1\nonumber\\
&=&\bigl(\rho^ {\text{atm}}(t + \Delta t)-\rho ^ {\text{atm}}(t)\bigr)\,
\frac{h_2}{12}\,(f_1+f_2)\,e_1 \label{eq:appv1}
\end{eqnarray}
\noindent based on \eqref{inner_f}, where \( e_1 = \begin{bmatrix} 1 & 0 & \cdots & 0 \end{bmatrix}^T \in \mathbb{R}^{n-1} \) and $h_2 = z_2-z_1 = z_2$.

\noindent Similarly, using \eqref{inner}, \eqref{inner_D_prime}, \eqref{inner_D_prime_prime}, and \eqref{inner_f_prime}, we get:
\begin{eqnarray}
 v_3(t) &=& \rho^ {\text{atm}} (t+\Delta t)\left[\mathcal{G} \ltwoinner{\varphi_1 ,\varphi_2}  + \dfrac{1}{z_F^2} \ltwoinner{D  \varphi_1' ,\varphi_2'}-\dfrac{\mathcal{F}}{z_F} \ltwof{ \varphi_1 ,\varphi_2'}- \dfrac{\mathcal{M}}{z_F}\ltwoinner{D  \varphi_1 ,\varphi_2'} \right]e_1 \nonumber\\
  &\approx & \rho^ {\text{atm}} (t+\Delta t)\left[\mathcal{G} \frac{h_2}{6}  - \dfrac{1}{z_F^2}\frac{D_1+D_2}{2h_2} -\dfrac{\mathcal{F}}{z_F}\frac{f_1+f_2}{4}- \dfrac{\mathcal{M}}{z_F}\frac{D_1+D_2}{4} \right]e_1 \label{eq:appv3}
 \end{eqnarray}
 For uniform meshing, $h_2$ is replaced by $h$ in \eqref{eq:appv1} and \eqref{eq:appv3}.
\subsection{Matrices}\label{sec:matvec2}
\noindent The entries of \( M_f \) are given by the weighted inner products for $i=1,2,\cdots , n-1$:
$$
(M_f)_{i,j} = \langle \varphi_{i+1}, \varphi_{j+1} \rangle_f = \int_0^{z_F} \varphi_{i+1}(z) \varphi_{j+1}(z) f(z) \, dz.
 $$
\noindent Using \eqref{inner_f} ,
the entries are approximated as follows:
\begin{eqnarray*}
(M_f)_{ij} \approx
\begin{cases}
\dfrac{f_{i+2}+f_{i+1}}{2}\,\dfrac{h_{i+2}}{3}
+\dfrac{f_{i+1}+f_i}{2}\,\dfrac{h_{i+1}}{3},
& \text{if } i=j,\; i=1,\ldots,n-2, \vspace{2mm} \\

\dfrac{f_n+f_{n-1}}{2}\,\dfrac{h_n}{3},
& \text{if } i=j=n-1, \vspace{2mm} \\

\dfrac{f_{i+1}+f_i}{2}\,\dfrac{h_{i+1}}{6},
& \text{if } j=i-1,\; i=2,\ldots,n-1, \vspace{2mm} \\

\dfrac{f_{i+2}+f_{i+1}}{2}\,\dfrac{h_{i+2}}{6},
& \text{if } j=i+1,\; i=1,\ldots,n-2, \vspace{2mm} \\

0,
& \text{otherwise}.
\end{cases}
\end{eqnarray*}
where for a nonuniform meshing\\

\resizebox{\textwidth}{!}{
\normalsize
$
M_f \approx
\begin{bmatrix}
\displaystyle
\frac{f_3+f_2}{2}\, \frac{h_3}{3} + \frac{f_2+f_1}{2}\, \frac{h_2}{3} & \displaystyle \frac{f_3+f_2}{2}\, \frac{h_3}{6} & 0 & \cdots & 0 \\[6pt]

\displaystyle
\frac{f_3+f_2}{2}\, \frac{h_3}{6} & \displaystyle \frac{f_4+f_3}{2}\, \frac{h_4}{3} + \frac{f_3+f_2}{2}\, \frac{h_3}{3} & \displaystyle \frac{f_4+f_3}{2}\, \frac{h_4}{6} & \cdots & 0 \\[6pt]

0 & \displaystyle \frac{f_4+f_3}{2}\, \frac{h_4}{6} & \ddots & \ddots & \vdots \\[6pt]

\vdots & \vdots & \ddots & \displaystyle \frac{f_{n-1}+f_{n-2}}{2}\, \frac{h_{n-1}}{3} + \frac{f_{n-2}+f_{n-3}}{2}\, \frac{h_{n-2}}{3} & \displaystyle \frac{f_n+f_{n-1}}{2}\, \frac{h_n}{6} \\[6pt]

0 & 0 & \cdots & \displaystyle \frac{f_n+f_{n-1}}{2}\, \frac{h_n}{6} & \displaystyle \dfrac{f_n+f_{n-1}}{2}\, \dfrac{h_n}{3}
\end{bmatrix}
$
}\\

\noindent and for a uniform mesh\\

\resizebox{\textwidth}{!}{
\scriptsize
$
M_f \approx\dfrac{h}{12}
\begin{bmatrix}
\displaystyle  2(f_1 + 2 f_2 + f_3) & \displaystyle  (f_2 + f_3) & 0 & \cdots & 0 \\[2pt]

\displaystyle  (f_2 + f_3) & \displaystyle  2(f_2 + 2 f_3 + f_4) & \displaystyle  (f_3 + f_4) & \cdots & 0 \\[2pt]

0 & \displaystyle  (f_3 + f_4) & \ddots & \ddots & \vdots \\[2pt]

\vdots & \vdots & \ddots & \displaystyle  2(f_{n-2} + 2 f_{n-1} + f_n) & \displaystyle  (f_{n-1} + f_n) \\[2pt]

0 & 0 & \cdots & \displaystyle  (f_{n-1} + f_n) & \displaystyle  2(f_{n-1} + f_n)
\end{bmatrix}
$}\\\\

\noindent Now, for \( K_f \), its entries are given by
$$
(K_f)_{i,j} := \mathcal{F}\,\langle \varphi_{i+1}',\,\varphi_{j+1} \rangle_f, \quad i,j = 1,\ldots,n-1,
$$
 which are approximated using \eqref{inner_f_prime}, to get
\begin{eqnarray*}
(K_f)_{ij} \approx \dfrac{\mathcal{F}}{4} \;
\begin{cases}
{f_i - f_{i+2}}, 
& \text{if } i=j, \; i=1,\ldots,n-2, \vspace{2mm} \\
{f_{n-1} + f_n}, 
& \text{if } i=j=n-1, \vspace{2mm} \\
{f_i + f_{i+1}}, 
& \text{if } j=i-1, \; i=2,\ldots,n-1, \vspace{2mm} \\
-(f_{i+1} + f_{i+2}), 
& \text{if } j=i+1, \; i=1,\ldots,n-2, \vspace{2mm} \\

0, & \text{otherwise}.
\end{cases}
\end{eqnarray*}

\noindent Thus,\\
\resizebox{\textwidth}{!}{
$
K_f \approx \dfrac{\mathcal{F}}{4}
\begin{bmatrix}
f_1 - f_3 & -(f_2 + f_3) & 0 & 0 & \cdots & 0 \\
f_2 + f_3 & f_2 - f_4 & -(f_3 + f_4) & 0 & \cdots & 0 \\
0 & f_3 + f_4 & f_3 - f_5 & -(f_4 + f_5) & \cdots & 0 \\
0 & 0 & f_4 + f_5 & f_4 - f_6 & \cdots & 0 \\
\vdots & \vdots & \vdots & \vdots & \ddots & -(f_{n-1} + f_{n}) \\
0 & 0 & 0 & 0 & f_{n-1} + f_n & f_{n-1} + f_n
\end{bmatrix}
$}\\
\noindent Similarly, we approximate the matrices $A(D)$ and $S(D)$ whose entries are given by $
A_{i,j} := \langle D\,\varphi_{i+1}',\,\varphi_{j+1} \rangle, 
$  and 
$
S_{i,j} := \langle D \varphi_{i+1}', \varphi_{j+1}' \rangle, 
$
respectively for $ i,j = 1,\ldots,n-1,$.\\ 

\noindent For $A(D)$, we use \eqref{inner_D_prime} to get
\[
A_{i,j} \approx
\begin{cases}
\dfrac{D_i - D_{i+2}}{4}, 
& \text{if } i=j, \; i=1,\ldots,n-2, \vspace{1mm} \\
\dfrac{D_{\,n-1} + D_n}{4}, 
& \text{if } i=j=n-1, \vspace{1mm} \\
\dfrac{D_i + D_{i+1}}{4}, 
& \text{if } j=i-1, \; i=2,\ldots,n-1, \vspace{1mm} \\
-\dfrac{D_{i+1} + D_{i+2}}{4}, 
& \text{if } j=i+1, \; i=1,\ldots,n-2, \vspace{1mm} \\
0, & \text{otherwise},
\end{cases}
\]
where
\[
\resizebox{\textwidth}{!}{$
A(D) \approx \dfrac{1}{4}
\begin{bmatrix}
D_1 - D_3 & -(D_2 + D_3) & 0 & 0 & \cdots & 0 \\[1mm]
D_2 + D_3 & D_2 - D_4 & -(D_3 + D_4) & 0 & \cdots & 0 \\[1mm]
0 & D_3 + D_4 & D_3 - D_5 & -(D_4 + D_5) & \cdots & 0 \\[1mm]
0 & 0 & D_4 + D_5 & D_4 - D_6 & \cdots & 0 \\[1mm]
\vdots & \vdots & \vdots & \vdots & \ddots & -(D_{\,n-1}+D_{\,n}) \\[1mm]
0 & 0 & 0 & 0 & D_{\,n-1}+D_{\,n} & D_{\,n-1}+D_{\,n}
\end{bmatrix}.
$}
\]
For $S(D)$, we use \eqref{inner_D_prime_prime}
to get
\[
S_{ij} \approx
\begin{cases}
\dfrac{D_i + D_{i+1}}{2\,h_{i+1}}
\;+\;
\dfrac{D_{i+1} + D_{i+2}}{2\,h_{i+2}},
& \text{if } i=j,\; i=1,\ldots,n-2, \vspace{2mm} \\

\dfrac{D_{n-1} + D_n}{2\,h_n},
& \text{if } i=j=n-1, \vspace{2mm} \\

-\dfrac{D_i + D_{i+1}}{2\,h_{i+1}},
& \text{if } j=i-1,\; i=2,\ldots,n-1, \vspace{2mm} \\

-\dfrac{D_{i+1} + D_{i+2}}{2\,h_{i+2}},
& \text{if } j=i+1,\; i=1,\ldots,n-2, \vspace{2mm} \\

0, & \text{otherwise}.
\end{cases}
\]

\noindent \noindent where for a nonuniform mesh\\
\[
S(D) \approx
\begin{bmatrix}
\dfrac{D_1}{2h_2} + \dfrac{D_2}{2h_3} & -\dfrac{D_2}{2h_3} & 0 & \cdots & 0 \\
-\dfrac{D_2}{2h_3} & \dfrac{D_2}{2h_3} + \dfrac{D_3}{2h_4} & -\dfrac{D_3}{2h_4} & \cdots & 0 \\
0 & \ddots & \ddots & \ddots & 0 \\
\vdots & \cdots & -\dfrac{D_{n-2}}{2h_{n-1}} & \dfrac{D_{n-2}}{2h_{n-1}} + \dfrac{D_{n-1}}{2h_n} & -\dfrac{D_{n-1}}{2h_n} \\
0 & \cdots & 0 & -\dfrac{D_{n-1}}{2h_n} & \dfrac{D_{n-1}}{2h_n}
\end{bmatrix}
\]

\[
\qquad
+
\begin{bmatrix}
\dfrac{D_2}{2h_2} + \dfrac{D_3}{2h_3} & -\dfrac{D_3}{2h_3} & 0 & \cdots & 0 \\
-\dfrac{D_3}{2h_3} & \dfrac{D_3}{2h_3} + \dfrac{D_4}{2h_4} & -\dfrac{D_4}{2h_4} & \cdots & 0 \\
0 & \ddots & \ddots & \ddots & 0 \\
\vdots & \cdots & -\dfrac{D_{n-1}}{2h_{n-1}} & \dfrac{D_{n-1}}{2h_{n-1}} + \dfrac{D_n}{2h_n} & -\dfrac{D_n}{2h_n} \\
0 & \cdots & 0 & -\dfrac{D_n}{2h_n} & \dfrac{D_n}{2h_n}
\end{bmatrix}_{(n-1)\times(n-1)}
\]
\noindent and for a uniform mesh\\
\begin{equation*}
\resizebox{0.95\textwidth}{!}{$
S(D) \approx \dfrac{1}{2h}
\begin{bmatrix}
D_1 + 2D_2 + D_3 & -(D_2 + D_3) & 0 & \cdots & 0 \\
-(D_2 + D_3) & D_2 + 2D_3 + D_4 & -(D_3 + D_4) &  & \vdots \\
0 & \ddots & \ddots & \ddots & 0 \\
\vdots &  & -(D_{n-2} + D_{n-1}) & D_{n-2} + 2D_{n-1} + D_n & -(D_{n-1} + D_n) \\
0 & \cdots & 0 & -(D_{n-1} + D_n) & D_{n-1} + D_n
\end{bmatrix}
$}
\end{equation*}

\noindent Finally for the mass matrix $M$, given by $M_{i,j} := \langle \varphi_{i+1}, \varphi_{j+1} \rangle,$  we use \eqref{inner} to get
\begin{eqnarray*}
M_{ij}
=
\begin{cases}
\dfrac{h_{i+1}+h_{i+2}}{3},
& \text{if } i=j,\; i=1,\ldots,n-2, \vspace{2mm} \\

\dfrac{h_n}{3},
& \text{if } i=j=n-1, \vspace{2mm} \\

\dfrac{h_{i+1}}{6},
& \text{if } j=i-1,\; i=2,\ldots,n-1, \vspace{2mm} \\

\dfrac{h_{i+2}}{6},
& \text{if } j=i+1,\; i=1,\ldots,n-2, \vspace{2mm} \\

0, & \text{otherwise},
\end{cases}
\end{eqnarray*}
\noindent where for a nonuniform mesh
\begin{eqnarray*}
M = \frac{1}{6}
\begin{bmatrix}
2(h_2 + h_3) & h_3 & 0 & 0 & \cdots & 0 \\
h_3 & 2(h_3 + h_4) & h_4 & 0 & \cdots & 0 \\
0 & h_4 & 2(h_4 + h_5) & h_5 & \cdots & 0 \\
0 & 0 & h_5 & 2(h_5 + h_6) & \cdots & 0 \\
\vdots & \vdots & \vdots & \vdots & \ddots & h_n \\
0 & 0 & 0 & 0 & h_n & 2h_n
\end{bmatrix}_{(n-1)\times(n-1)}.
\end{eqnarray*}
and for a uniform mesh\\
\begin{eqnarray*}
M = \frac{h}{6}
\begin{bmatrix}
4 & 1 & 0 & 0 & \cdots & 0 \\
1 & 4 & 1 & 0 & \cdots & 0 \\
0 & 1 & 4 & 1 & \cdots & 0 \\
0 & 0 & 1 & 4 & \cdots & 0 \\
\vdots & \vdots & \vdots & \vdots & \ddots & 1 \\
0 & 0 & 0 & 0 & 1 & 2
\end{bmatrix}_{(n-1)\times(n-1)}.
\end{eqnarray*}

\end{appendix}
\end{document}